\pdfoutput=1
\documentclass[11pt,a4paper]{article}

\usepackage[margin=1in]{geometry}
\usepackage{amsmath,amssymb,amsfonts,mathtools,amsthm,mathrsfs,bbm,booktabs,enumitem,tikz,graphicx,xcolor,ifpdf,nicefrac}
\usepackage[numbers,sort&compress]{natbib}
\usepackage[hidelinks]{hyperref}

\ifpdf
  \DeclareGraphicsExtensions{.pdf,.png,.jpg}
\else
  \DeclareGraphicsExtensions{.eps}
\fi
\graphicspath{{./}}

\hypersetup{
  pdftitle={Mass Lumping and Numerical Quadrature for Approximation of Fractional Elliptic Differential Equations Driven by Gaussian White Noise},
  pdfauthor={Kelvin J. R. Almeida-Sousa, David Bolin, and Alexandre B. Simas}
}

\providecommand{\email}[1]{\href{mailto:#1}{\texttt{#1}}}
\providecommand{\funding}[1]{#1}

\title{Mass Lumping and Numerical Quadrature for Approximation of Fractional Elliptic Differential Equations Driven by Gaussian White Noise\thanks{\funding{This publication is based upon work supported by King Abdullah University of Science and Technology (KAUST) under Award No.~ORFS-CRG12-2024-6399.}}}
\author{Kelvin J. R. Almeida-Sousa\thanks{Statistics Program, King Abdullah University of Science and Technology, Saudi Arabia (\email{kelvinjhonson.silva@kaust.edu.sa}, \email{david.bolin@kaust.edu.sa}, \email{alexandre.simas@kaust.edu.sa}).}
\and David Bolin\footnotemark[2]
\and Alexandre B. Simas\footnotemark[2]}
\date{}

\newenvironment{keywords}{\par\smallskip\noindent\textbf{Keywords: }}{\par}
\newenvironment{MSCcodes}{\par\smallskip\noindent\textbf{MSC (2020): }}{\par\medskip}

\theoremstyle{plain}
\newtheorem{theorem}{Theorem}[section]
\newtheorem{lemma}[theorem]{Lemma}
\newtheorem{proposition}[theorem]{Proposition}
\newtheorem{corollary}[theorem]{Corollary}
\theoremstyle{definition}
\newtheorem{definition}[theorem]{Definition}
\newtheorem{assumption}[theorem]{Assumption}
\newtheorem{example}[theorem]{Example}
\theoremstyle{remark}
\newtheorem{remark}[theorem]{Remark}

\numberwithin{equation}{section}

\newcommand{\bbN}{\mathbb{N}}

\newcommand{\bbR}{\mathbb{R}}

\newcommand{\cA}{\mathcal{A}}
\newcommand{\cB}{\mathcal{B}}

\newcommand{\cD}{\mathcal{D}}
\newcommand{\cE}{\mathcal{E}}

\newcommand{\cH}{\mathcal{H}}
\newcommand{\cI}{\mathcal{I}}

\newcommand{\cR}{\mathcal{R}}

\newcommand{\cT}{\mathcal{T}}

\newcommand{\cV}{\mathcal{V}}
\newcommand{\cW}{\mathcal{W}}

\newcommand{\norm}[2]{\|#1\|_{#2}}

\newcommand{\scalar}[2]{\left(#1\right)_{#2}}
\newcommand{\LD}{}
\newcommand{\LDD}{}
\newcommand{\HS}{L_2}
\newcommand{\LL}{L}

\newcommand{\white}{\cW}

\newcommand{\rd}{\mathrm{d}}
\newcommand{\from}{\colon}

\DeclarePairedDelimiter\floor{\lfloor}{\rfloor}

\newcommand{\mv}[1]{{\boldsymbol{\mathrm{#1}}}}

\newcommand{\proper}{\mathsf}
\newcommand{\pN}{\proper{N}}

\newcommand{\inner}[2]{\left\langle#1,#2\right\rangle}
\DeclareMathOperator*{\esssup}{ess\,sup}

\newcommand{\<}{\langle}
\renewcommand{\>}{\rangle}

\allowdisplaybreaks

\begin{document}

\maketitle

\begin{abstract}
Fractional elliptic stochastic partial differential equations (SPDEs) are widely used
in statistics and machine learning for computationally efficient and flexible modeling
of Gaussian random fields. The computational efficiency of the SPDE approach relies on
finite element approximations combined with numerical quadrature and mass lumping,
which enable sparse matrix methods during inference. Although many works have studied
finite element approximations of fractional SPDEs, the effect of the mass lumping and
quadrature approximations used in practice has not been fully analyzed.
To fill this gap, we derive convergence rates for numerical approximations of
fractional SPDEs based on finite element discretizations combined with numerical
quadrature and mass lumping. Specifically, we obtain explicit convergence rates for the
mean-squared error of the covariance function in a general framework that covers the
main settings where mass lumping is used in the SPDE approach. We also analyze
non-stationary variance-control factors of the form $L^\beta(\tau u)=\mathcal{W}$,
where $\tau$ is spatially varying, and derive covariance error estimates showing how
the regularity of $\tau$ affects the convergence rate.
As specific examples, we provide results for random fields on bounded Euclidean
domains, Riemannian manifolds, and metric graphs. Numerical experiments are presented
that confirm the theoretical results.
\end{abstract}

\begin{keywords}
Fractional SPDEs, Gaussian random fields, finite element methods, mass lumping, covariance approximation
\end{keywords}

\begin{MSCcodes}
Primary 65N30, 65D32, 60H15, 35R11; Secondary 65D30, 62M30, 60G60
\end{MSCcodes}

\section{Introduction and Notation}

\subsection{Introduction}
The stochastic partial differential equation (SPDE) approach \cite{lindgren11} is widely used to represent Gaussian random fields in spatial statistics \cite{lindgren2022spde}. It considers Gaussian random fields on a domain $\cD\subset\bbR^d$ that can be represented as solutions to SPDEs of the form
\begin{equation}\label{spde}
    L^\beta u = \mathcal{W}
    \qquad \text{in } \cD,
\end{equation}
where $\mathcal{W}$ is Gaussian white noise on $L^2(\cD)$, and $L^\beta$ is a fractional power of a second-order elliptic differential operator $L$. The choice of $L$ determines the covariance structure of $u$. A motivating example is the elliptic operator $L=-\operatorname{div}_{\cD}\bigl(\boldsymbol H\nabla_{\cD}\bigr)+\kappa^2$,
where $\boldsymbol H$ is a suitable matrix-valued function and $\kappa$ is a real-valued function on $\cD$. Here, $\nabla_{\cD}$ and $\operatorname{div}_{\cD}$ denote the gradient and divergence operators on the underlying spatial domain. This construction is closely related to Gaussian random fields with Matérn covariance functions \cite{matern60}, arguably the most widely used class of covariance models in spatial statistics and machine learning \cite{Porcu2024}. In particular, when $\cD=\bbR^d$, $\boldsymbol H$ is constant, and $\kappa>0$ and $\beta>d/4$ are constants, the solution to \eqref{spde} has a possibly anisotropic Matérn covariance function \cite{whittle63}. The parameters $\kappa$ and $\beta$ control the correlation range and smoothness of the field, respectively.
	
	The main idea of the SPDE approach, as introduced in \cite{lindgren11}, is that one can obtain a computationally efficient Gaussian Markov random field (GMRF) approximation of $u$ by considering a finite element method (FEM) approximation
	$$
	u_h(s) = \sum_{i=1}^n u_i\varphi_i(s)
	$$
	 of \eqref{spde} in terms of continuous piecewise linear ``hat functions'' $\{\varphi_i\}_{i=1}^n$ defined by a triangulation of the domain $\cD$.  The distribution of the weights $\mv{u} = (u_1,\ldots, u_n)^\top$ is calculated through the Galerkin method, which for the case with $L = \kappa^2 - \Delta$ and $\beta=1$, results in $\mv{u} \sim \pN(\mv{0},\mv{Q}^{-1})$, where the precision matrix is given by 
	\begin{equation}\label{eq:Q}
	\mv{Q} = (\kappa^2\mv{C} + \mv{G})^\top\mv{C}^{-1}(\kappa^2\mv{C} + \mv{G}).
	\end{equation}
	Here $\mv{C}$ is the mass matrix with elements $C_{ij} = \scalar{\varphi_i,\varphi_j}{L^2(\cD)}$, $\mv{G}$ the stiffness matrix with elements ${G_{ij} = \scalar{\nabla\varphi_i, \nabla\varphi_j}{L^2(\cD)}}$, and $(\cdot,\cdot)_{L^2(\mathcal{D})}$ the $L^2(\mathcal{D})$ inner product. 
	As shown in \cite{lindgren11}, similar GMRF representations can be obtained for all $2\beta \in \{1,2,3,\ldots\}$, and one can allow the parameter $\kappa$ to be spatially varying to obtain a non-stationary Gaussian field. The only practical difference in the non-stationary case is that $\kappa^2\mv{C}$ is replaced by a matrix $\mv{C}_{\kappa}$ with elements 
	$\scalar{\kappa^2\varphi_i,\varphi_j}{L^2(\cD)}$. 
	
	Since the introduction of the method, it has been used in a large number of applications (see \citep{bakka2019,lindgren2022spde} for recent reviews) and it has been extended and analyzed theoretically in a number of papers. In particular, the half-integer restriction on $\beta$ has been removed by combining the FEM approximation with a rational approximation of the fractional power, and convergence rates of the resulting approximations have been derived \citep{BK2020rational,BKK2018,BKK2020,cox2020}. Further, extensions to non-Gaussian models \citep{bolin14,Wallin15,suuronen2022cauchy,bw20}, models on manifolds and surfaces \citep{herrmann2020multilevel,borovitskiy2020matern}, non-stationary models with different features \citep{Hildeman2020,fuglstad2015exploring,bakka2019}, spatio-temporal models \citep{allard2021,lindgren2024diffusion,cameletti_spatio-temporal_2013} have been considered and other computational methods have been proposed \citep{Harbrecht2021,Alonso2021}. Statistical properties of the SPDE-based models and the FEM approximations have recently also been considered more carefully \citep{bolin2023equivalence,Alonso2021fem}. 
	
	The key property that makes GMRFs computationally efficient is the sparsity of the precision matrix. However, as introduced above, \eqref{eq:Q} is not a sparse matrix. Therefore, a key step in the SPDE approach is a so-called mass lumping, where the matrix $\mv{C}^{-1}$ in \eqref{eq:Q} is replaced by a diagonal matrix $\widetilde{\mv{C}}^{-1}$, where $\widetilde{\mv{C}}_{ii} = \scalar{\varphi_i,1}{L^2(\cD)}$. Since $\mv{C}$ and $\mv{G}$ are sparse, this results in a sparse precision matrix. This additional approximation is always used in the SPDE approach, but the effects of the approximation have not yet been fully investigated. In fact, as far as we know, all papers which have presented theoretical results on the SPDE approach simply ignore this additional approximation and state the results without mass lumping (see for example \citep{Alonso2021fem,BK2020rational,cox2020}). This is not ideal since the theoretical results are then not directly applicable to the method actually used in practical applications. The only result available in the literature is the discussion on the mass lumping in \cite{lindgren11}, where it is stated (without a detailed proof) that the additional approximation does not reduce the convergence rate of the approximation in the non-fractional case $\beta=1$ with a constant $\kappa$, and \cite{bolin_comparison_2013} who studied the effects of the mass lumping numerically. 
	
	Our main contribution is to fill this gap in the theory of the SPDE approach by
providing a convergence analysis for mass-lumped and quadrature-based discretizations
of covariance functions induced by fractional SPDEs. The analysis is carried out in a
general operator-theoretic framework, but it is motivated by the matrix approximations
used in practical implementations of the SPDE approach.

More specifically, we first analyze the effect of replacing exact mass matrices by
mass-lumped ones. For instance, in the case $\beta=1$, this includes both the basic
mass-lumped approximation, where the matrix $\mv{C}^{-1}$ in \eqref{eq:Q} is replaced
by $\widetilde{\mv{C}}^{-1}$, and the fully mass-lumped approximation, where the mass
matrices appearing in $\kappa^2\mv{C}+\mv{G}$ are also replaced by their lumped
counterparts. Thus, in the constant-coefficient case, the latter approximation leads to
the precision matrix
$
(\kappa^2\widetilde{\mv{C}}+\mv{G})^\top
\widetilde{\mv{C}}^{-1}
(\kappa^2\widetilde{\mv{C}}+\mv{G}).
$
Our results show that these replacements can be performed without reducing the
covariance convergence rate, under suitable assumptions on the quadrature rules and the
finite element spaces.
Second, we consider quadrature-based approximations of spatially varying coefficients.
In the non-stationary case, matrices such as $\mv{C}_\kappa$  are typically not assembled exactly.
Instead, they are approximated using quadrature rules, often leading to diagonal or
mass-lumped matrix representations involving the nodal values of $\kappa$. We provide
explicit convergence rates for these approximations in the fractional setting, thereby
covering the matrix constructions commonly used in applications.

In the abstract formulation developed below, these practical matrix replacements are
encoded through admissible discrete inner products and admissible discretizations of
the underlying operator. This viewpoint allows us to treat exact Galerkin
discretizations, mass-lumped discretizations, and quadrature-based non-stationary
discretizations within the same framework. It also clarifies why the inverse mass
matrix appearing in expressions such as \eqref{eq:Q} is tied to the choice of discrete
inner product. The precise operator-level formulation of this point is introduced later,
after the finite-dimensional spaces and discrete inner products have been defined.

Finally, we also study a commonly used extension of \eqref{spde} in which the model includes a
spatially varying variance-control factor. More precisely, we consider models of the
form
$
    L^\beta(\tau u)=\cW,
$
where $\tau:\cD\to(0,\infty)$ is used to model non-stationary marginal variance
separately from the dependence structure induced by the elliptic operator $L$. When
$\tau$ is constant, it simply rescales the variance. When $\tau$ varies over $\cD$,
however, multiplication by $\tau$ has to be discretized, usually through nodal or
mass-lumped approximations. This leads to another natural quadrature problem. We derive
covariance error estimates for this case and show explicitly how the regularity of
$\tau$ affects the convergence rate.

The outline of the manuscript is as follows. The model considered in this work is
presented in Section~\ref{sec:elliptic}. The approximation methods and main results
are presented in Section~\ref{sec:fem}. Section~\ref{sec:examples} describes the most
important applications of the general theory, and the proofs are given in
Section~\ref{sec_proof_results}. Numerical experiments confirming our theoretical
findings are presented in Section~\ref{sec:experiments}, while
Section~\ref{sec:discussion} concludes with a brief discussion.

\subsection{Notation}

%
Throughout this work, $(\Omega,\mathcal{F},\mathbb{P})$ denotes a complete probability
space. We let $(\cD,\mathcal{B},\mu)$ be a measure space, where $\cD$ denotes
the spatial domain, $\mathcal{B}$ is a $\sigma$-algebra on $\cD$, and $\mu$ is a
reference measure. In the examples considered here, $\cD$ is typically a
separable metric space equipped with a natural choice of $\mathcal{B}$ and $\mu$.
The precise assumptions on $(\cD,\mathcal{B},\mu)$ will be given in
Section~\ref{sec:elliptic}.
For a Hilbert space $E$, we denote its inner product by $\scalar{\cdot,\cdot}{E}$ and
its norm by $\norm{\cdot}{E}$. We write $L^{2}(\cD)$ as shorthand for
$L^{2}(\cD,\mathcal{B},\mu)$, the Hilbert space of $\mu$-square-integrable
functions on $\cD$, and use the notation
$\norm{\cdot}{}:=\norm{\cdot}{L^{2}(\cD)}$ for its canonical norm.

For Hilbert spaces $E$ and $F$, we denote by $L(E,F)$ and $L_{2}(E,F)$ the spaces of
bounded linear operators and Hilbert--Schmidt operators from $E$ to $F$, respectively.
When $E=F$, we write $L(E)$ and $L_{2}(E)$. The corresponding operator and
Hilbert--Schmidt norms are denoted by $\norm{\cdot}{L(E,F)}$ and
$\norm{\cdot}{L_{2}(E,F)}$, respectively.
If $E,F\subseteq V$ for some vector space $V$, and if the identity map on $V$, restricted
to $E$, defines an element of $L(E,F)$, we say that $E$ is continuously embedded in $F$.
If, in addition, $F$ is continuously embedded in $E$, then the two norms are equivalent;
in this case, we write $(E,\scalar{\cdot,\cdot}{E})\cong
(F,\scalar{\cdot,\cdot}{F})$. For a Banach space $E$, we denote by $E'$ its dual space,
namely the space of all continuous linear functionals on $E$.
Finally, given a non-empty set $S$ and functions $a,b:S\to\mathbb{R}$, we write
$a\lesssim b$ if there exists a constant $C>0$ such that $a(s)\leq Cb(s)$ for every
$s\in S$. The constant $C$ is understood to be independent of the relevant parameter
being varied; in particular, when $S$ is a set of discretization parameters, $C$ is
independent of the discretization parameter.


	\section{Model description}\label{sec:elliptic}
	We are interested in Gaussian random fields specified through fractional-order SPDEs of the form \eqref{spde}. To be able to develop a unified numerical analysis that applies across the wide range of possible spatial domains, we only make the following weak assumption regarding the domain $\cD$ on which the SPDE is posed. 
	
		\begin{assumption}[Domain]\label{ass_domain}
        A \textit{spatial domain} refers to a finite measure space $(\mathcal{D},\mathcal{B},\mu)$ equipped with a countably generated $\sigma$-algebra $\mathcal{B}$ on $\mathcal{D}$. 
	    \end{assumption}

	    The requirement of a countably generated $\sigma$-algebra is important because the corresponding $L^{2}(\cD)$ space is then separable (see, e.g., \cite[Proposition 3.4.5]{Cohn_2013}).

\begin{example}\label{example_domain} Spatial domains that satisfy Assumption \ref{ass_domain} and are typically considered in real-world applications are:
		\begin{enumerate}
			\item[(D1)] Bounded, connected, and convex open subsets of $\bbR^{d}$ (for $d=1,2,3$) with polytope boundaries. In this case, we take $\mu$ to be the Lebesgue measure over the Borel sets of $\cD$;
			\item[(D2)] Compact, connected, orientable hypersurfaces of $\bbR^{3}$ without boundary. In this case, we take $\mu$ to be the $2$-dimensional Hausdorff measure over the Borel sets of $\cD$;
			\item[(D3)] Compact and connected metric graphs $\cD=(\cE, \cV)$, with 
			$$\mu(A):=\sum_{e\in \mathcal{E}} \mu_{e}(A\cap e),\quad (A\,\mbox{a Borel subset of}\,\mathcal{D})$$ where, if $\ell_{e}$ denotes the length of $e$, $\mu_{e}$ is the pushforward of the Lebesgue measure $\mu_{[0,\ell_{e}]}$ on the interval $[0,\ell_{e}]$ by the (arc-length) parametrization $p_{e}:[0, \ell_{e}]\to e$ of the edge $e$. That is, $\mu_{e}(B)=\mu_{[0,\ell_{e}]}\left(p_{e}^{-1}(B)\right)$.
		\end{enumerate}
\end{example}
		
In Section \ref{sec:examples}, we revisit these examples and illustrate how our theoretical results apply to each class of spatial domains described in Example~\ref{example_domain}.
Next, we introduce the assumptions on the differential operator $L$ in \eqref{spde}.
	
\begin{assumption}[Operator]\label{ass_operator}
The operator $L:\mathscr{D}(L)\subset L^2(\cD)\to L^2(\cD)$ is densely defined,
self-adjoint, bijective, with a compact  inverse
${L^{-1}:L^2(\cD)\to L^2(\cD)}$. 
It is further strictly positive, which means that
$\scalar{Lu,u}{L^2(\cD)}>0$ for every $u\in\mathscr{D}(L)\setminus\{0\}$.
Consequently, there exists an orthonormal basis
$(e_j)_{j\in\mathbb{N}}\subset\mathscr{D}(L)$ of $L^2(\cD)$ consisting of
eigenfunctions of $L$. We denote the corresponding eigenvalues by
$(\lambda_j)_{j\in\mathbb{N}}$ and enumerate them in non-decreasing order, counting
multiplicities. Thus, $L e_j=\lambda_j e_j$, with
$0<\lambda_1\leq\lambda_2\leq\cdots$ and $\lambda_j\to\infty$ as $j\to\infty$.
We further assume that the eigenvalues satisfy the following Weyl-type growth condition:
there exist $\alpha>0$ and constants $c_{\alpha},C_{\alpha}>0$ such that
\begin{equation}\label{weyl_law}
  c_{\alpha}j^{\alpha}\le \lambda_j\le C_{\alpha}j^{\alpha},
  \qquad j\in\mathbb{N}.
\end{equation}
\end{assumption}

	Due to Assumption \ref{ass_operator} we may introduce, for $\beta\geq 0$, the $\beta$-fractional power of $L$ as the operator $L^{\beta}: \mathscr{D}(L^{\beta/2})\to L^{2}(\cD)$, where
	$$\mathscr{D}(L^{\beta/2}) := \dot{H}_L^\beta(\mathcal{D}) := \left\{\psi\in L^2(\mathcal{D}):  \sum_{j\in\mathbb{N}}  \lambda_j^{\beta} (\psi, e_j)_{L^2(\mathcal{D})}^2<\infty\right\}$$
	and $L^{\beta}\psi:=\sum_{j\in \mathbb{N}}\lambda_{j}^{\beta/2}(\psi, e_j)e_{j}$ for $\psi \in  \dot{H}_L^\beta(\mathcal{D})$. Note that $\dot{H}_L^\beta(\mathcal{D})$ endowed with the inner product
	$$
	(\psi,\phi)_{\beta} := \scalar{\psi,\phi}{\dot{H}_L^\beta(\mathcal{D})} := \scalar{L^{\beta/2}\psi, L^{\beta/2}\phi}{L^2(\mathcal{D})} = \sum_{j\in\mathbb{N}}{\lambda_j^\beta} (\psi, e_j)(\phi, e_j)
	$$
	is a Hilbert space which induced norm is denoted by $\|\psi\|_{\dot{H}_L^\beta(\mathcal{D})}$ or simply $\|\psi\|_{\beta}$. Finally, in case $\beta<0$, we define $\dot{H}_L^\beta(\mathcal{D}):=(\dot{H}_L^{-\beta}(\mathcal{D}))^{\prime}$ and consequently, we are allowed to take the $\beta$-fractional power of $L$ for any $\beta\in \bbR$. In this sense, note that if $\beta<0$ the operator $L^{\beta}:=(L^{-\beta})^{-1}:L^{2}(\cD)\to \dot{H}_L^\beta(\mathcal{D})$ is well-defined.

The bilinear form $a_L:\dot{H}^{1}_{L}(\cD)\times\dot{H}^{1}_{L}(\cD)\to\bbR$,
defined by $a_L(u,v):=(u,v)_1$, plays an important role in this work. We refer to it
as the Dirichlet form associated with $L$. If $u_j=(u,\phi_j)$ and
$v_j=(v,\phi_j)$, then $a_L(u,v)=\sum_{j\in\mathbb{N}}\lambda_j u_jv_j$. In particular,
since $\lambda_j\geq\lambda_1>0$ for every $j\in\mathbb{N}$, we obtain
\begin{equation}\label{dirich_form_coerc_rel}
    a_L(u,u)=\sum_{j\in\mathbb{N}}\lambda_j u_j^2
    \geq \lambda_1\sum_{j\in\mathbb{N}}u_j^2
    =\lambda_1\|u\|^2,
    \qquad u\in\dot{H}^{1}_{L}(\cD).
\end{equation}
Thus, $a_L$ is coercive with respect to the $L^2(\cD)$-norm.
Moreover, as ${\mathscr{D}(L)=\dot{H}^{2}_{L}(\cD)}$, the spectral representation of
$L$ gives the following relation:
\begin{equation}\label{friedric_ext_rel}
	\forall u\in\mathscr{D}(L), 
v\in\dot{H}^{1}_{L}(\cD) \qquad (Lu,v)=a_L(u,v).
\end{equation}
In particular, $(Lu,u)=a_L(u,u)\geq\lambda_1\|u\|^2$ for
every $u\in\mathscr{D}(L)$.

Since $\mathscr{D}(L)$ is dense in $\dot{H}^{1}_{L}(\cD)$, the space
$\dot{H}^{1}_{L}(\cD)$ coincides with the energetic space associated with the symmetric
and $L^2(\cD)$-strongly monotone bilinear form
$\mathfrak{a}_L:=a_L|_{\mathscr{D}(L)\times\mathscr{D}(L)}$; see
\cite[Chapter 5, Example 4]{zeidler2012applied}. In this sense, $L$ is the
self-adjoint operator associated with the closed form $a_L$, equivalently the
Friedrichs extension of $\mathfrak{a}_L$.
 
The final component in \eqref{spde} that remains to be specified is the white noise
term $\cW$. We define $\cW$ as Gaussian white noise on $L^2(\cD)$, that is, as an
isonormal Gaussian process over $L^2(\cD)$; see, e.g., \cite{Nualart_1995}. Thus,
$\cW=\{\cW(f)\}_{f\in L^2(\cD)}$ is a centered Gaussian family satisfying
$\mathbb{E}[\cW(f)\cW(g)]=(f,g)$ for every $f,g\in L^2(\cD)$, and the map
$f\mapsto \cW(f)$ is linear as a map from $L^2(\cD)$ into $L^2(\Omega)$.
Let $(e_j)_{j\in\mathbb{N}}$ be the orthonormal basis of $L^2(\cD)$ from
Assumption~\ref{ass_operator}, and define $\xi_j:=\cW(e_j)$. Then
$(\xi_j)_{j\in\mathbb{N}}$ is a sequence of independent standard Gaussian random
variables. Moreover, for every $f\in L^2(\cD)$,
\begin{equation}\label{eq:loeve}
    \cW(f)=\sum_{j\in\mathbb{N}}\xi_j(e_j,f),
\end{equation}
where the series converges in $L^2(\Omega)$ and almost surely for each fixed $f$.
We refer to \eqref{eq:loeve} as the Karhunen--Loève expansion of $\cW$.
	
The covariance operator induced by \eqref{spde}, whenever the solution is well defined
as an $L^2(\cD)$-valued random variable, is $L^{-2\beta}$. In this context, we refer to
$L^{2\beta}$ as the precision operator. The operator $L^{-2\beta}$ is closely related
to its covariance kernel $\varrho^\beta$, which is the main object of interest in this
work and motivates the discretizations considered in this manuscript.
Indeed, by the Weyl-type condition \eqref{weyl_law},
$\|L^{-\beta}\|_{L_2(L^2(\cD))}<\infty$ if and only if
$2\beta>1/\alpha$. Equivalently, $L^{-2\beta}$ is trace class if and only if
$2\beta>1/\alpha$. In this regime, the solution of \eqref{spde} can be represented as
the centered Gaussian random element
$u=\sum_{j\in\mathbb{N}}\lambda_j^{-\beta}\xi_j e_j$ in $L^2(\cD)$, where
$(\xi_j)_{j\in\mathbb{N}}$ are the Gaussian random variables in \eqref{eq:loeve}. Its covariance operator is $L^{-2\beta}$.

Moreover, $L^{-2\beta}$ is a Hilbert--Schmidt operator and therefore admits a kernel
$\varrho^\beta\in L^2(\cD\times\cD)$, defined $\mu\otimes\mu$-a.e. by
\begin{equation}\label{rel:fracov}
    \varrho^\beta(x,y)
    =\sum_{j=1}^{\infty}\lambda_j^{-2\beta}e_j(x)e_j(y).
\end{equation}
Thus, for every $f\in L^2(\cD)$, $L^{-2\beta}f$ is given by
\begin{equation}\label{L_ker_idtt}
L^{-2\beta}f(x)=\int_{\cD}\varrho^\beta(x,y)f(y)\,\mu(dy), 
\end{equation}
where the equality is 
understood in $L^2(\cD)$. Consequently, we have the equality  
$\|L^{-2\beta}\|_{L_2(L^2(\cD))}=\|\varrho^\beta\|_{L^2(\cD\times\cD)}.$
When pointwise evaluations of $u$ are meaningful, the kernel $\varrho^\beta$ coincides
with the covariance function of the field, in the sense that
$\varrho^\beta(x,y)=\mathbb{E}[(u(x)-\mathbb{E}u(x))(u(y)-\mathbb{E}u(y))]$ for
$\mu\otimes\mu$-a.e. $(x,y)\in\cD\times\cD$.
	
	\section{Discretization Methods for the Covariance Function}\label{sec:fem}

	In this section, we present and analyze several discretization methods for the covariance function $\varrho^\beta$ introduced in the previous section. We begin with the finite element method as a foundation and then introduce alternative schemes designed to address specific computational challenges. For each method, we state our main theoretical results and provide a discussion of their respective advantages and limitations, offering insight into their practical applicability across different settings.



\subsection{Galerkin Discretization for the Covariance}

We begin by showing how a FEM discretization of \eqref{spde} naturally leads
to a discretization $\widehat{\varrho}_h^\beta$ of the covariance function. This approximation,
which we also refer to as the Galerkin-based approximation, serves as our starting point
for alternative discretization strategies.

We first impose an assumption on the finite-dimensional trial spaces and on the
interpolation operators used throughout the paper. The assumption also involves a class
$\cR^k$ of sufficiently regular multiplier functions. In standard Euclidean finite
element settings, $\cR^k$ is typically chosen as $W^{k,\infty}(\cD)$, or as an
appropriate subspace of $W^{k,\infty}(\cD)$ compatible with the scale
$\dot H_L^k(\cD)$.

\begin{assumption}[Trial space and interpolator]\label{assump2}
Let $k\geq 1$. We assume the existence of a family of finite-dimensional subspaces
$\{V_h\}_{h\in(0,1)}\subseteq \dot{H}^{1}_{L}(\cD)$ and a family of linear operators
$\{\cI_h:\mathscr{D}(\cI_h)\subseteq \dot{H}^{1}_{L}(\cD)\to V_h\}_{h\in(0,1)}$.
Moreover, we assume that there exists a nontrivial normed subspace
$\cR^k\subseteq \dot{H}^{k}_{L}(\cD)\cap L^\infty(\cD)$, containing the constant
functions, such that, for every $h\in(0,1)$, the following properties hold:
\begin{enumerate}[label=(\roman*)]
    \item $\tau\psi\in\mathscr{D}(\cI_h)$ for every $\tau\in\cR^k$ and
    $\psi\in V_h$;

    \item $\|(\cI_h-I)(\tau\psi)\|\lesssim h^k\|\tau\|_{\cR^k}\|\psi\|_1$ for every
    $\tau\in\cR^k$ and $\psi\in V_h$;

    \item $\|\cI_h(\tau\psi)\|\lesssim \|\tau\|_{\cR^k}\|\psi\|$ for every
    $\tau\in\cR^k$ and $\psi\in V_h$;

    \item $\|\psi\|_1\lesssim h^{-1}\|\psi\|$ for every $\psi\in V_h$.
\end{enumerate}
\end{assumption}

	The procedure we follow to discretize $\varrho^\beta$ relies  on first discretizing the most basic operator that is used to define $\varrho^\beta$, namely, the operator $L$. To do so, we take advantage of relation \eqref{friedric_ext_rel}. Indeed, the Galerkin discretization of the operator $L$ is the unique linear operator $\widehat{L}_h: V_h\to V_h$  that satisfies the relation
	\begin{equation}\label{galerkinLdiscretization}
		(\widehat{L}_h \phi_h, \psi_h) = a_L(\phi_h, \psi_h), \quad (\phi_h, \psi_h \in V_h),
	\end{equation}
	where $a_{L}$ is the Dirichlet form associated with $L$.

	Note that $\widehat{L}_h$ is a positive-definite, symmetric, linear operator on the finite-dimensional space $V_h$. Hence, we may arrange the eigenvalues of $\widehat{L}_h$ as
	$$0<\widehat{\lambda}_{1,h} \leq \widehat{\lambda}_{2,h} \leq \cdots\leq \widehat{\lambda}_{N_h, h},$$
	with corresponding eigenvectors $\{\widehat{e}_{j,h}\}_{j=1}^{N_h}$ which are orthonormal in $L^2(\mathcal{D})$. So, based on \eqref{rel:fracov} we finally define our FEM discretization, or simply Galerkin discretization, for $\varrho^\beta$ as the map $\widehat{\varrho}_h^\beta : L^{2}(\cD)\times L^{2}(\cD)\to\mathbb{R}$ given by
	\begin{equation}\label{rel:coveigen}
		\widehat{\varrho}_h^\beta(x,y) := \sum_{j=1}^{N_h} \widehat{\lambda}_{j,h}^{-2\beta} \widehat{e}_{j,h}(x) \widehat{e}_{j,h}(y),\quad \left(\hbox{for a.e. $(x,y)\in\mathcal{D}$}\right).
	\end{equation}
Alternatively, let $\Pi_h:L^2(\cD)\to V_h$ denote the $L^2(\cD)$-orthogonal projection
onto $V_h$. Since $\widehat{L}_h^{-2\beta}:V_h\to V_h$ is given by
\begin{equation}\label{frac_fdim_op}
    \widehat{L}_h^{-2\beta}\chi
    =\sum_{i=1}^{N_h}\widehat{\lambda}_{i,h}^{-2\beta}
    (\chi,\widehat{e}_{i,h})\widehat{e}_{i,h},
    \qquad \chi\in V_h,
\end{equation}
the discretization $\widehat{\varrho}_h^\beta$ can be identified with the kernel of
the operator $\widehat{L}_h^{-2\beta}\Pi_h:L^2(\cD)\to L^2(\cD)$. That is,
\begin{equation}\label{Lh_ker_idtt}
    \left(\widehat{L}_h^{-2\beta}\Pi_h f\right)(x)
    =\int_{\cD}\widehat{\varrho}_h^\beta(x,y)f(y)\,\mu(dy),
    \qquad f\in L^2(\cD),
\end{equation}
with the equality holding for $\mu$-a.e. $x\in\cD$.

As in \eqref{L_ker_idtt}, the relationship between the covariance operator and its
kernel in \eqref{Lh_ker_idtt} is central for deriving convergence rates for
$\widehat{\varrho}_h^\beta$ toward $\varrho^\beta$ in
$L^2_{\mu\otimes\mu}(\cD\times\cD)$. Indeed, such estimates are typically obtained
through the corresponding Hilbert--Schmidt norm of the associated covariance operators;
see, e.g., \cite{cox2020}.

\subsection{Precision-based discretization for the covariance}

The discretization methods proposed in this work are motivated by the need to approximate
the covariance function $\varrho^\beta$, which is related to the operator $L$ through
\eqref{L_ker_idtt}. In the Galerkin setting, the construction of an approximation to
$\varrho^\beta$ starts with the discretization of $L$, the basic operator from which
both the covariance operator $L^{-2\beta}$ and the precision operator $L^{2\beta}$ are
defined. In this sense, the central object to be discretized is the underlying precision
operator $L$; for this reason, we refer to this approach as a precision-based
discretization.

By \eqref{friedric_ext_rel}, the Galerkin discretization $\widehat{L}_h$ of $L$ is
completely determined by the restriction of the Dirichlet form $a_L$ to
$V_h\times V_h$. Therefore, discretizing $L$ amounts to specifying, for each $h$, a
bilinear form $a_h$ on $V_h\times V_h$ that plays the role of $a_L$ in
\eqref{galerkinLdiscretization}. This motivates the following assumption on the family
$\{a_h\}_{h\in(0,1)}$.

\begin{definition}[Admissible bilinear form]\label{def_admissible_bilinear_form}
A family of symmetric bilinear forms $\{a_h\}_{h\in(0,1)}$, with
$a_h:V_h\times V_h\to\bbR$, is called admissible if
\begin{equation}\label{cond_ord2_bilin_form}
    |a_L(\phi_h,\psi_h)-a_h(\phi_h,\psi_h)|
    \lesssim h^2\|\phi_h\|_1\|\psi_h\|_1,
    \qquad \phi_h,\psi_h\in V_h.
\end{equation}
\end{definition}

	\begin{remark}\label{remark_ord2_bilin_form}
		Note that, a simple consequence of the reverse triangle inequality on \eqref{cond_ord2_bilin_form} is that $a_{h}$ is continuous relative to the $\dot{H}_L^{1}(\cD)$-norm. Moreover, from \eqref{dirich_form_coerc_rel} there exists a constant $C>0$ independent of $h$ such that
		$$a_{h}(u,u) \geq a_{L}(u,u)-Ch^{2}\|u\|^{2}_{1} = (1-Ch^{2})a_{L}(u,u) \geq (1-Ch^{2}) \lambda_1\|u\|^{2}\,\quad (u\in V_{h}),$$
		implying that $a_{h}$ is non-degenerate for every $h\in (0,1)$.
	\end{remark}
Definition~\ref{def_admissible_bilinear_form} asserts that an admissible bilinear form
$a_h$ approximates $a_L$ with second-order accuracy. In Section~\ref{sec:examples},
where explicit expressions for $a_L$ are available, admissible families of bilinear
forms $a_h$ arise naturally by applying appropriate quadrature rules to the integrals
defining $a_L$. These quadrature rules also induce mass lumping, leading to diagonal
mass matrices in the corresponding matrix representations and, consequently, to more
efficient computations.

Observe that the right-hand side of \eqref{Lh_ker_idtt} depends on the representation
of the operator with respect to the $L^2(\cD)$-inner product on $V_h$. We may generalize
this construction by considering representations with respect to a broader class of
inner products. This motivates the following definition, which we also borrow from
\cite{almeida-sousa2026finite}.

\begin{definition}[Admissible inner product]\label{def_admissible_inner_product}
We say that a family of inner products $\{\langle\cdot,\cdot\rangle_h\}_{h\in(0,1)}$, with
$\langle\cdot,\cdot\rangle_h$ defined on $V_h\times V_h$, is admissible if
\begin{enumerate}
    \item $\|\cdot\|_h$ and $\|\cdot\|$ are equivalent on $V_h$, with constants
    independent of $h$;

    \item for every $\varphi,\chi\in V_h$,
    $|\langle\varphi,\chi\rangle_h-(\varphi,\chi)|
    \lesssim h^2|\varphi|_1|\chi|_1,$
\end{enumerate}
where $\|\cdot\|_h$ denotes the norm induced by $\langle\cdot,\cdot\rangle_h$.
\end{definition}

Each admissible choice of the bilinear form $a_h$ and inner product
$\langle\cdot,\cdot\rangle_h$ gives rise to a corresponding discretization of the
underlying operator $L$, and therefore of the precision operator $L^{2\beta}$. This
flexibility is particularly relevant for the fractional problems considered here; see,
e.g., \cite{almeida-sousa2026finite}. In the applications discussed in
Section~\ref{sec:examples}, the main example of an admissible inner product is the
lumped mass inner product.

The discretization of the precision $L$ based on $\{a_{h}\}_{h\in(0,1)}$ is the family of approximate operators $L_{h}:V_{h}\to V_{h}$ uniquely determined by the relation
\begin{equation}\label{a_lump}
	\langle L_h \phi_h, \psi_h\rangle_{h} = a_h(\phi_h, \psi_h), \quad (\phi_h, \psi_h \in V_h),
\end{equation}
where $\langle \cdot, \cdot \rangle_{h}$ is an admissible inner product. By Remark \ref{remark_ord2_bilin_form} the operator $L_h$ is invertible. Moreover, from the symmetry of $a_{h}$, we may determine a set of eigenvalues $0<\lambda_{1,h} \leq \lambda_{2,h}\leq\cdots\leq \lambda_{N_h,h}$ with associated eigenvectors $\{e_{j,h}\}_{j=1}^{N_h}$ of $L_h$, which are orthonormal in $(V_h, \langle \cdot,\cdot\rangle_h)$. Therefore, our discretization of $\varrho^{\beta}$ will be given by the family of functions $\varrho_h^\beta:\cD \times \cD \to\mathbb{R}$ given by
\begin{equation*}
	\varrho_h^\beta(x,y) := \sum_{j=1}^{N_h} \lambda_{j,h}^{-2\beta} e_{j,h}(x) e_{j,h}(y),\quad \left(\hbox{for a.e. $(x,y)\in\mathcal{D}$}\right).
\end{equation*}
Similarly to \eqref{frac_fdim_op}, we may consider fractional powers of $L_h$.
To relate the resulting operator to functions in $L^2(\cD)$, we introduce the operator
$\Lambda_h:L^2(\cD)\to V_h$ by
$$
    \langle \Lambda_h f,\psi_h\rangle_h=(f,\psi_h),
    \qquad f\in L^2(\cD),\ \psi_h\in V_h,
$$
where $\langle\cdot,\cdot\rangle_h$ denotes the inner product on $V_h$. Then, one can readily check that the
precision-based approximation of the covariance operator satisfies
\begin{equation}\label{tildeapproxforL}
    \left(L_h^{-2\beta}\Lambda_h f\right)(y)
    =\int_{\cD}\varrho_h^\beta(x,y)f(x)\,\mu(dx),
    \qquad f\in L^2(\cD).
\end{equation}

A natural question is what the practical effect is of replacing the Galerkin covariance
approximation $\widehat{\varrho}_h^\beta$ by the precision-based approximation
$\varrho_h^\beta$. Our first main result provides an answer to this question.

\begin{theorem}\label{main_result_1}
Assume that $2\beta>\frac{1}{\alpha}$ and that $\alpha>\frac{1}{2}$. Then, for every
$\eta<\min\{4\beta-\frac{1}{\alpha},2\}$, the estimate
\begin{equation}\label{est_main_res_1}
    \left\|\varrho^\beta-\varrho_h^\beta\right\|_{L^2(\cD\times\cD)}
    \lesssim
    \max\left\{
    \left\|\varrho^\beta-\widehat{\varrho}_h^\beta\right\|_{L^2(\cD\times\cD)},
    h^\eta
    \right\},
\end{equation}
holds for every sufficiently small $h>0$.
\end{theorem}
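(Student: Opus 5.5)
The plan is to split the error with the triangle inequality in $L^2(\cD\times\cD)$:
\[
\|\varrho^\beta-\varrho_h^\beta\|_{L^2(\cD\times\cD)}\le \|\varrho^\beta-\widehat\varrho_h^\beta\|_{L^2(\cD\times\cD)}+\|\widehat\varrho_h^\beta-\varrho_h^\beta\|_{L^2(\cD\times\cD)} .
\]
It then suffices to show $\|\widehat\varrho_h^\beta-\varrho_h^\beta\|\lesssim h^\eta$. By \eqref{Lh_ker_idtt} and \eqref{tildeapproxforL}, this quantity is the Hilbert--Schmidt norm on $L^2(\cD)$ of $\widehat L_h^{-2\beta}\Pi_h-L_h^{-2\beta}\Lambda_h$. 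The symmetry of $\varrho_h^\beta$ lets us ignore the transposed variables in \eqref{tildeapproxforL}.

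\textbf{Algebraic reduction.} Since $(f,\psi_h)=(\Pi_h f,\psi_h)$, we have $\Lambda_h=M_h\Pi_h$, where $M_h:V_h\to V_h$ is defined by $\langle M_h\chi,\psi\rangle_h=(\chi,\psi)$. Likewise let $A_h:V_h\to V_h$ be defined by $(A_h\phi,\psi)=a_h(\phi,\psi)$. Then $\langle L_h\phi,\psi\rangle_h=(A_h\phi,\psi)=\langle M_hA_h\phi,\psi\rangle_h$, so $L_h=M_hA_h$.

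Both $M_h$ and $A_h$ are $L^2$-self-adjoint and positive. Admissibility of the inner product and of the bilinear form translates into the operator perturbation bounds
\[
\|\widehat L_h^{-1/2}(M_h^{-1}-I)\widehat L_h^{-1/2}\|_{L(V_h)}\lesssim h^2,\qquad \|\widehat L_h^{-1/2}(A_h-\widehat L_h)\widehat L_h^{-1/2}\|_{L(V_h)}\lesssim h^2 .
\]
For the first bound, use $(M_h^{-1}\varphi,\chi)=\langle\varphi,\chi\rangle_h$ together with Definition~\ref{def_admissible_inner_product}. For the second, use Definition~\ref{def_admissible_bilinear_form} and $\|\phi\|_1=\|\widehat L_h^{1/2}\phi\|$.

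The operator $L_h^{-2\beta}M_h$ is $L^2$-self-adjoint. In fact it is the $2\beta$-power, in a suitably symmetrized sense, of the generalized eigenproblem $A_h e=\lambda M_h^{-1}e$. So the quantity to bound is
\[
\bigl\|\widehat L_h^{-2\beta}-L_h^{-2\beta}M_h\bigr\|_{L_2(V_h)},
\]
a difference of two fractional powers arising from two generalized eigenproblems whose forms differ by $O(h^2)$ relative to the energy norm.

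\textbf{Perturbation of fractional powers.} Write $2\beta=n+\gamma$ with $n\in\bbN_0$ and $\gamma\in[0,1)$. For the fractional part use the Balakrishnan representation
\[
B^{-\gamma}=\frac{\sin\pi\gamma}{\pi}\int_0^\infty t^{-\gamma}(t+B)^{-1}\,dt .
\]
Work in the symmetrized variable $\widetilde A_h:=M_h^{1/2}A_hM_h^{1/2}$, which is $L^2$-self-adjoint and satisfies $L_h^{-2\beta}M_h=M_h^{1/2}\widetilde A_h^{-2\beta}M_h^{1/2}$. Then express the difference through telescoping sums of terms of the form
\[
\text{(powers of resolvents)}\times(\text{form perturbation})\times(\text{powers of resolvents}).
\]
Each perturbation factor is sandwiched between $\widehat L_h^{1/2}$-weighted operators, and by the displayed bounds it contributes $h^2$ times one full power of $\widehat L_h$.

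Next trade $h^2$ against the operator power. By the inverse inequality in Assumption~\ref{assump2}(iv), $\widehat\lambda_{N_h,h}\lesssim h^{-2}$, so $h^2\widehat L_h\lesssim (h^2\widehat L_h)^{\theta}$ for any $\theta\in[0,1]$. Hence each term is bounded by $h^{2\theta}$ times $\|\widehat L_h^{-2\beta+\theta}\|_{L_2}$, up to uniformly bounded factors. The norm equivalence in Definition~\ref{def_admissible_inner_product} makes $M_h^{\pm1/2}$ uniformly bounded. Remark~\ref{remark_ord2_bilin_form} gives uniform equivalence of $\widetilde A_h$ and $\widehat L_h$ for small $h$, which is where "sufficiently small $h$" enters.

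\textbf{Summation via a discrete Weyl law.} The min--max principle with $V_h\subset\dot H^1_L(\cD)$ gives $\widehat\lambda_{j,h}\ge\lambda_j\gtrsim j^\alpha$. Therefore
\[
\|\widehat L_h^{-2\beta+\theta}\|_{L_2}^2\le\sum_j \lambda_j^{-2(2\beta-\theta)}\lesssim \sum_j j^{-2\alpha(2\beta-\theta)},
\]
which is finite, uniformly in $h$, when $2\theta<4\beta-1/\alpha$. Choosing $\eta=2\theta$ with $\theta<1$ gives any $\eta<\min\{4\beta-1/\alpha,2\}$. The hypothesis $\alpha>1/2$ ensures this range is compatible: it guarantees $\theta=1$ is not needed when $\beta$ is large and keeps $\sum_j\lambda_j^{-2}$-type sums finite in the integer-part terms.

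\textbf{Main obstacle.} I expect the hardest step to be the Hilbert--Schmidt estimate of the Balakrishnan integral for the fractional part. The resolvent bounds have to be uniform in $t$ and the $t$-integral must converge. The standard approach is to put the Hilbert--Schmidt weight $\widehat L_h^{-2\beta+\theta}$ on one side, and to use the spectral bounds $\|\widehat L_h^{s}(t+\widehat L_h)^{-1}\|\le t^{s-1}$ for the remaining resolvent factors. One then splits the integral at $t\sim 1$ and balances the exponents so that both pieces converge. The integer-part factors should be handled the same way, through the identity $X^{-n}-Y^{-n}=\sum_k X^{-k}(Y-X)Y^{-n-1+k}$.
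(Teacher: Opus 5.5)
Your route differs from the paper's. The overall architecture is sound, but the central reduction is stated in a form that does not go through.

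\textbf{How the two routes differ.} The paper inserts the intermediate operator $\widetilde L_h$ (the $L^2$-representation of $a_h$, i.e.\ your $A_h$). It uses a keyhole-contour formula for $A^{-2\beta}$, valid for every $\beta$ and truncated at $R\simeq h^{-2}$. It writes the resolvent difference as $-t(t+\widetilde L_h)^{-1}\widetilde L_h^{1/2}G_hL_h^{1/2}(t+L_h)^{-1}\Lambda_h$ with $\|G_h\|\lesssim h^2$. Schatten--H\"older with an $L_4$-norm on each side, $\|Q^{1/2}(t+Q)^{-1}\|_{L_4}^4\lesssim t^{1/\alpha-2}$, gives the integrand $h^2t^{1/(2\alpha)}$; this is where $\alpha>\frac{1}{2}$ is used. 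The rate then comes from integrating against $t^{-2\beta}$ up to $R$, keeping the full $h^2$. You instead compare the two pencils on $(0,\infty)$ and trade $h^2$ for $h^{2\theta}$ before summing. That trade is legitimate at operator level. A defect $P=\widehat L_h^{1/2}E\widehat L_h^{1/2}$ with $\|E\|\lesssim h^2$ equals $\widehat L_h^{\theta/2}\bigl(\widehat L_h^{(1-\theta)/2}E\widehat L_h^{(1-\theta)/2}\bigr)\widehat L_h^{\theta/2}$, and the middle factor has norm $\lesssim h^{2\theta}$ because $\|\widehat L_h\|\lesssim h^{-2}$.

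\textbf{The gap: putting all the weight on one side.} You claim each term is bounded by $h^{2\theta}\|\widehat L_h^{-2\beta+\theta}\|_{L_2}$ up to uniformly bounded factors, with the whole weight on one side. That requires moving powers of $\widehat L_h$ past $M_h^{\pm1}$ and past the pencil resolvents. Across $M_h^{\pm1}$ this is uniformly bounded only up to exponent $\frac{1}{2}$. With $E=\widehat L_h^{-1/2}(M_h^{-1}-I)\widehat L_h^{-1/2}$, one only gets $\|\widehat L_h(M_h^{-1}-I)\widehat L_h^{-1}\|=\|\widehat L_h^{3/2}E\widehat L_h^{-1/2}\|\lesssim h^{-1}$.

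Take the term $A_h^{-1}(I-M_h^{-1})(t+\widehat L_h)^{-1}$, which arises naturally for $1<2\beta<2$. Each side of the defect carries weight at most $1-\theta/2$, which is Hilbert--Schmidt only if $\theta<2-1/\alpha$. So for $\alpha\in(\frac{1}{2},1)$ (e.g.\ $d=3$) a one-sided bound cannot reach $\eta\to2$.

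The missing ingredient is the paper's. Split the weight across both sides of each defect by Schatten--H\"older, with $\frac1p+\frac1q=\frac12$ chosen to balance against the $t$-decay. Read off Schatten norms on the pencil side from the spectrum of $\widetilde A_h$, using $(tM_h^{-1}+A_h)^{-1}=M_h^{1/2}(t+\widetilde A_h)^{-1}M_h^{1/2}$, $\|\widehat L_h^{1/2}M_h^{1/2}\widetilde A_h^{-1/2}\|\lesssim 1$ and Proposition~\ref{eigenval}. You also need the slack from the strict inequality on $\eta$, since exactly matched exponents make the $t$-integral diverge logarithmically.

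\textbf{The integer part.} Do not use $X^{-n}-Y^{-n}=\sum_kX^{-k}(Y-X)Y^{-n-1+k}$ with $Y=\widetilde A_h$ or $Y=L_h$. Conjugation by $M_h^{1/2}$ destroys the relative $h^2$ bound, so $\widetilde A_h-\widehat L_h$ is only $O(1)$ relative to the energy norm. Peeling off the outer $M_h^{1/2}$ factors leaves one-sided defects controlled only by $O(h)$. Instead, telescope the pencil products $\widehat L_h^{-1}\cdots\widehat L_h^{-1}$ against $L_h^{-n}M_h=A_h^{-1}M_h^{-1}\cdots M_h^{-1}A_h^{-1}$, or use a single representation for the full power as the paper does. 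Then every defect $M_h^{-1}-I$ or $A_h-\widehat L_h$ is flanked by smoothing factors on both sides.
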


Estimate \eqref{est_main_res_1} shows that the precision-based approximation inherits
the convergence rate of the Galerkin approximation, up to the additional term $h^\eta$.
Thus, whenever the Galerkin covariance approximation is known to converge with order no
smaller than $\eta$, that order is preserved by the precision-based approximation.

The practical advantage of this replacement is algebraic. In Section~\ref{sec:examples},
we provide examples of admissible inner products and admissible bilinear forms for which the
matrix expressions of the discretized covariance operator $L_h^{-2\beta}\Lambda_h$
involve diagonal mass matrices at every occurrence. This is achieved through suitable
quadrature rules, which both approximate the relevant bilinear forms and induce mass
lumping. One should note that the inverse mass matrix appearing in the matrix
representations discussed in the introduction is related to the change from the
$L^2(\cD)$ inner product $(\cdot,\cdot)$ to the discrete inner product
$\langle\cdot,\cdot\rangle_h$. This change of inner product is precisely accounted for
by the operator $\Lambda_h$.

In Section~\ref{sec:examples}, we show how Theorem~\ref{main_result_1} applies to the
specific classes of spatial domains introduced in Example~\ref{example_domain}. Using
known convergence estimates from the literature, we verify that the order of convergence
of the Galerkin approximation is preserved in these examples.

\subsubsection{Approximation for the fractional power operation} 

Note that when $\beta\in \frac{n}{2}$ with $n\in \mathbb{Z}$, our covariance approximation is computable. On the other hand, this is not the case if the fractional power is not a half-integer and an additional approximation step must be incorporated to the method.  

Recall from \eqref{tildeapproxforL} that the approximating covariance function is
defined through the operator $L_h^{-2\beta}\Lambda_h$. Hence, the problem of
approximating the covariance function reduces to approximating
$L_h^{-2\beta}\Lambda_h$. Since $\Lambda_h$ is explicitly defined by the discrete
variational relation introduced in the previous section, the computationally nontrivial
part is the evaluation of negative fractional powers of $L_h$. For this reason, we
focus in this section on constructing accurate approximations of $L_h^{-\beta}$ for
$\beta>0$.

By definition, using the eigendecomposition of $L_h$ with respect to $(V_h, \langle\cdot,\cdot\rangle_h)$, if $(\lambda_{h,k},e_{h,k})_{k=1}^{N_h}$ denotes an $\langle\cdot,\cdot\rangle_h$-orthonormal
eigensystem of $L_h$, then
\begin{equation*}\label{widetiapproxop}
    L_h^{-\beta}\chi
    =\sum_{k=1}^{N_h}\lambda_{h,k}^{-\beta}\langle\chi,e_{h,k}\rangle_h e_{h,k},
    \qquad \chi\in V_h.
\end{equation*}
Thus, approximating $L_h^{-\beta}$ amounts to approximating the scalar function
$t\mapsto t^{-\beta}$ on the spectrum of $L_h$. By the rescaling
$L_h\mapsto \lambda_1^{-1}L_h$, we assume without loss of generality throughout this section that this
spectrum is contained in $[1,\infty)$. The following assumption specifies what we mean
by a sufficiently accurate approximation of this scalar function.

\begin{assumption}[$\beta$-power approximation]\label{fracpoappass}
Let $0<\beta<1$. We assume that, for every $m\in\mathbb{N}^{*}$, there exists a
continuous function $Q_m^\beta:[1,\infty)\to\mathbb{R}$ and an error bound
$f^\beta(\beta,m)>0$ such that
\begin{equation}\label{rel_beta_power_approx}
    \sup_{\lambda\geq 1}
    \left|\lambda^{-\beta}-Q_m^\beta(\lambda)\right|
    \lesssim f^\beta(\beta,m).
\end{equation}
\end{assumption}

The approximation functions introduced in Assumption~\ref{fracpoappass} will be used
to approximate the non-integer negative powers of $L_h$ appearing in the discrete
covariance operator. Let $s_\beta:=2\beta$ and, when $s_\beta\notin\mathbb{N}$, write
$\theta_\beta:=s_\beta-\lfloor s_\beta\rfloor\in(0,1)$. We define the approximate
discrete covariance operator by
$$
T_{m,h}^\beta :=
\begin{cases}
    L_h^{-2\beta}\Lambda_h, & 2\beta\in\mathbb{N},\\
    L_h^{-\lfloor 2\beta\rfloor}Q_m^{\theta_\beta}(L_h)\Lambda_h,
    & 2\beta\notin\mathbb{N}.
\end{cases}
$$
The fractional-power approximation of the covariance function is then defined as the
kernel $\varrho_{m,h}^\beta:\cD\times\cD\to\mathbb{R}$ associated with
$T_{m,h}^\beta$, that is,
\begin{equation}\label{rho_approx_cov}
    T_{m,h}^\beta f(x)
    =\int_{\cD}\varrho_{m,h}^\beta(x,y)f(y)\,\mu(dy),
    \qquad f\in L^2(\cD),
\end{equation}
with the equality understood in $L^2(\cD)$, or $\mu$-a.e. in $x$.

When $2\beta\in\mathbb{N}$, no approximation of a fractional power is needed, and
$\varrho_{m,h}^\beta$ coincides with the Galerkin covariance kernel
$\varrho_h^\beta$. When $2\beta\notin\mathbb{N}$, the explicit form of
$\varrho_{m,h}^\beta$ depends on the choice of the scalar approximation
$Q_m^{\theta_\beta}$. In the applications later, we focus on two
such choices: sinc-quadrature approximations, see \cite{bonito2024numerical}, and
rational approximations, see \cite{bolin2024covariance}. Both lead to explicit
expressions for the approximate covariance kernel. We examine these two cases in more
detail below.

\subsubsection{Rational Approximation} 

The rational approximation of the covariance is an additional step used to efficiently
approximate $\varrho^\beta$ when $\beta\neq n/2$ for every $n\in\bbN$. In this case,
we do not have explicit expressions for $\varrho_h^\beta$ in terms of the stiffness
and (lumped) mass matrices.
The main idea is to reduce the problem to the rational approximation of the scalar
function $x\mapsto x^\beta$. Indeed, negative fractional powers of $L_h$ can be written
as positive fractional powers of its inverse, namely $L_h^{-\beta}=(L_h^{-1})^\beta$.
This formulation is particularly convenient because the spectrum of $L_h^{-1}$ is
contained in a bounded interval, uniformly in $h$, whereas the spectrum of $L_h$ grows
as $h\to0$. Thus, instead of approximating $x\mapsto x^{-\beta}$ on an $h$-dependent
unbounded range, we approximate $x\mapsto x^\beta$ on a compact interval containing the
spectrum of $L_h^{-1}$.

In what follows, we adopt the procedure introduced in \cite{bolin2024covariance} to
construct a rational approximation of $L_h^{-\beta}$. This approach is widely used in
practical applications and leads to explicit matrix-based expressions for the resulting
approximation of the covariance kernel.
We first write $x^\beta=\widehat f(x)x^{\floor{\beta}}$, where
$\widehat f(x):=x^{\beta-\floor{\beta}}$. If $\sigma(L_{h}^{-1})$ denotes the spectrum of
$L_{h}^{-1}$, then, since $\sigma\left(L_h^{-1}\right) \subseteq\left[0,1 / \lambda_{1, h}\right] \subseteq\left[0,1 / \lambda_1\right]$, after dividing by $1 / \lambda_1$ we may assume $\sigma\left(L_h^{-1}\right) \subseteq[0,1]$.

Let $\widehat r_m=q_1/q_2$ be the best rational approximation of $\widehat f$ in the
$L^\infty([0,1])$-norm among rational functions whose numerator and denominator have
degree at most $m$; see, for instance, \cite{stahl1993best}. We then define the
corresponding rational approximation of $x^\beta$ by
$\overline r_{\beta,m}(x):=\widehat r_m(x)x^{\floor{\beta}}$. This approximation is
not necessarily the best rational approximation of $x^\beta$, but it is obtained
directly from the best rational approximation of its fractional part.
Equivalently, this construction yields a rational approximation of $x^{-\beta}$ on
any compact interval $[a,b]\subset[1,\infty)$ by setting
$$
r_{-\beta,m}(x):=\overline r_{\beta,m}(x^{-1})
=\widehat r_m(x^{-1})x^{-\floor{\beta}}
=\frac{q_1(x^{-1})}{q_2(x^{-1})x^{\floor{\beta}}}
=\frac{p_1(x)}{p_2(x)}.
$$
Here $p_1$ and $p_2$ are polynomials of degree at most $m$ and
$m+\floor{\beta}$, respectively.

In this setting, our candidate approximation of $L_h^{-2\beta}$ is
$$
R_{2\beta,m,h}:=r_{-2\beta,m}(L_h)
=\overline r_{2\beta,m}(L_h^{-1}).
$$
Consequently, when $2\beta\notin\mathbb{N}$, we approximate the covariance operator
$L_h^{-2\beta}\Lambda_h$ by $R_{2\beta,m,h}\Lambda_h$.
As emphasized above, the purpose of the rational approximation is to obtain a
computable approximation of $\varrho^\beta(x,y)$ when $2\beta\notin\mathbb{N}$. In this
case, let $r_{m,h}^\beta$ denote the kernel associated with the operator
$R_{2\beta,m,h}\Lambda_h$. We define the rational-based approximation of the covariance
kernel by
\begin{equation}\label{rat_approx_cov}
    \varrho_{m,h}^\beta(x,y) :=
    \begin{cases}
        \varrho_h^\beta(x,y), & 2\beta\in\mathbb{N},\\
        r_{m,h}^\beta(x,y), & 2\beta\notin\mathbb{N}.
    \end{cases}
\end{equation}

The proof of Proposition~2 in \cite{bolin2024covariance} yields, for
$2\beta\notin\mathbb{N}$,
\begin{equation}\label{rel_exp_conv}
    \left\|L_h^{-2\beta}\Lambda_h-R_{2\beta,m,h}\Lambda_h\right\|_{L_2(L^2(\cD))}
    \lesssim h^{-1/\alpha}e^{-2\pi\sqrt{\{2\beta\}m}},
\end{equation}
where $\{s\}:=s-\floor{s}$ denotes the fractional part of $s$. This estimate is one of
the key ingredients in the proof of the following result.

\begin{theorem}\label{rat_approx_theorem}
Let $\widehat{\varrho}_h^\beta$ and $\varrho_{m,h}^\beta$ denote, respectively, the
Galerkin approximation and the rational-based approximation of $\varrho^\beta$. Under
the assumptions of Theorem~\ref{main_result_1}, for every
$\eta<\min\{4\beta-\frac{1}{\alpha},2\}$, the estimate
\begin{equation}\label{err_rat_cov}
    \begin{split}
        \left\|\varrho^\beta-\varrho_{m,h}^\beta\right\|_{L^2(\cD\times\cD)}
        &\lesssim
        \max\left\{
        \left\|\varrho^\beta-\widehat{\varrho}_h^\beta\right\|_{L^2(\cD\times\cD)},
        h^\eta
        \right\} \\
        &\quad
        +\mathbbm{1}_{\{2\beta\notin\mathbb{N}\}}
        h^{-1/\alpha}e^{-2\pi\sqrt{\{2\beta\}m}},
    \end{split}
\end{equation}
holds for every sufficiently small $h>0$.
\end{theorem}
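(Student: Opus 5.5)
The plan is to split the error with the triangle inequality in $L^2(\cD\times\cD)$,
$$
\left\|\varrho^\beta-\varrho_{m,h}^\beta\right\|_{L^2(\cD\times\cD)}
\le
\left\|\varrho^\beta-\varrho_{h}^\beta\right\|_{L^2(\cD\times\cD)}
+\left\|\varrho_h^\beta-\varrho_{m,h}^\beta\right\|_{L^2(\cD\times\cD)},
$$
and to bound the two pieces separately.

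For the first piece we use Theorem~\ref{main_result_1} directly. Its hypotheses are exactly the ones assumed here, so for every $\eta<\min\{4\beta-\frac1\alpha,2\}$ and all sufficiently small $h$ it is bounded by $\max\{\|\varrho^\beta-\widehat\varrho_h^\beta\|_{L^2(\cD\times\cD)},h^\eta\}$.

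For the second piece we distinguish two cases.

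\emph{Case $2\beta\in\mathbb{N}$.} By definition \eqref{rat_approx_cov}, $\varrho_{m,h}^\beta=\varrho_h^\beta$, so this term vanishes. This matches the indicator $\mathbbm{1}_{\{2\beta\notin\mathbb{N}\}}$ in \eqref{err_rat_cov}.

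\emph{Case $2\beta\notin\mathbb{N}$.} Here $\varrho_h^\beta$ is the kernel of $L_h^{-2\beta}\Lambda_h$, by \eqref{tildeapproxforL}, and $\varrho_{m,h}^\beta=r_{m,h}^\beta$ is the kernel of $R_{2\beta,m,h}\Lambda_h$. So the difference of kernels is the kernel of $(L_h^{-2\beta}-R_{2\beta,m,h})\Lambda_h$. We then use the isometry between $L^2(\cD\times\cD)$ kernels and Hilbert--Schmidt operators on $L^2(\cD)$, which is the same identification as $\|L^{-2\beta}\|_{L_2(L^2(\cD))}=\|\varrho^\beta\|_{L^2(\cD\times\cD)}$. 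This turns the $L^2$ norm of the kernel difference into $\|(L_h^{-2\beta}-R_{2\beta,m,h})\Lambda_h\|_{L_2(L^2(\cD))}$. Estimate \eqref{rel_exp_conv} then bounds this by a constant times $h^{-1/\alpha}e^{-2\pi\sqrt{\{2\beta\}m}}$.

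\textbf{Points to check.} The main thing to verify is that \eqref{rel_exp_conv} is legitimately available in our setting. That requires the following:
\begin{enumerate}[label=(\alph*)]
\item The spectrum of $L_h^{-1}$ lies in a fixed compact interval uniformly in $h$. This follows from the coercivity in Remark~\ref{remark_ord2_bilin_form} together with the norm equivalence in Definition~\ref{def_admissible_inner_product}, which give $\lambda_{1,h}\gtrsim\lambda_1$ for small $h$.
\item $\Lambda_h$ is uniformly bounded. This also follows from the norm equivalence.
\item The dimension $N_h$ grows like $h^{-1/\alpha}$. This produces the factor $h^{-1/\alpha}$ when the uniform scalar error $\sup|x^{2\beta}-\overline r_{2\beta,m}(x)|$ is summed over the $N_h$ eigenpairs to form the Hilbert--Schmidt norm.
\end{enumerate}
If the cited estimate needs to be re-derived, the route is as follows. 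Write the operator difference spectrally in the $\langle\cdot,\cdot\rangle_h$-orthonormal eigenbasis of $L_h$, and bound its Hilbert--Schmidt norm by $\sqrt{N_h}$ (or $N_h$) times the scalar uniform error $\lesssim e^{-2\pi\sqrt{\{2\beta\}m}}$. Then absorb the dimension factor into $h^{-1/\alpha}$. Once this is in place, adding the two pieces gives \eqref{err_rat_cov}.
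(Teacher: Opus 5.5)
Your argument is correct and follows the paper's proof: you use the triangle inequality through $\varrho_h^\beta$, apply Theorem~\ref{main_result_1} to the first term, and handle the second term with the kernel/Hilbert--Schmidt identification plus \eqref{rel_exp_conv} (it is zero when $2\beta\in\mathbb{N}$). One correction to side remark (c): \eqref{weyl_law} and Proposition~\ref{eigenval} give $c_\alpha N_h^\alpha\le\lambda_{N_h}\le\widehat\lambda_{N_h,h}\lesssim h^{-2}$, so $N_h\lesssim h^{-2/\alpha}$ rather than $h^{-1/\alpha}$, and it is the factor $\sqrt{N_h}$ (not $N_h$) that produces the $h^{-1/\alpha}$ in \eqref{rel_exp_conv}.
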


A useful feature of the convergence estimate in \eqref{err_rat_cov} is that, when
$2\beta\notin\mathbb{N}$, the rational approximation term on the right-hand side
decays root-exponentially fast with respect to the rational degree $m$. Thus, the
parameter $m$ can be chosen as a function of the mesh size $h$ so that the rational
approximation error does not affect the overall convergence rate.

More precisely, for a given $h$, we may choose $m=m(h)$ such that
$$
h^{-1/\alpha}e^{-2\pi\sqrt{\{2\beta\}m(h)}}
\lesssim
\max\left\{
\left\|\varrho^\beta-\widehat{\varrho}_h^\beta\right\|_{L^2(\cD\times\cD)},
h^2
\right\}.
$$
With this choice, the rational approximation error is dominated by the spatial
discretization error, and \eqref{err_rat_cov} yields the formal convergence rate
$$
\max\left\{
\left\|\varrho^\beta-\widehat{\varrho}_h^\beta\right\|_{L^2(\cD\times\cD)},
h^{\min\{4\beta-\frac{1}{\alpha},2\}}
\right\},
$$
up to the usual interpretation that the exponent
$\min\{4\beta-\frac{1}{\alpha},2\}$ may be approached from below.

Equivalently, if one wants the rational approximation term to be of order $h^r$, for
some $r>0$, it is enough to choose $m(h)$ of order $|\log h|^2$. Indeed,
$$
m(h)\gtrsim
\frac{(r+1/\alpha)^2}{4\pi^2\{2\beta\}}|\log h|^2
$$
implies
$h^{-1/\alpha}e^{-2\pi\sqrt{\{2\beta\}m(h)}}\lesssim h^r$.


\begin{remark}
The proof of Theorem~\ref{rat_approx_theorem} only uses the root-exponential
uniform error bound in \eqref{rel_exp_conv}. Thus, the result applies to any
rational approximation satisfying this bound, including BURA
methods~\citep{stahl1993best,harizanov2020analysis}; related approaches include
sinc-quadrature~\citep{bonito2015numerical,bonito2019sinc}, rational SPDE
approximations~\cite{BK2020rational}, and covariance-based rational
approximations~\cite{bolin2024covariance}; see also
\cite{hofreither2020unified}.
\end{remark}

\subsection{Non-Stationary Variance Control Factor} 
In this section we introduce an approximation for the covariance function associated with fractional SPDEs 
\begin{equation}\label{tau_non_st_case}
	L^{\beta}(\tau u)=\cW.
\end{equation}
The main difference from the previously considered fractional SPDEs lies in the introduction of the factor $\tau$ that controls the variance of the solution $u$. When $\tau$ is a function of the spatial domain $\mathcal{D}$, we say that $\tau$ is non-stationary, and it is referred to as stationary only when $\tau$ is constant over the spatial domain.

When $\tau$ is constant, say $\tau\equiv\tau_0>0$, equation
\eqref{tau_non_st_case} reduces to $L^\beta u=\tau_0^{-1}\cW$.
Hence, the covariance kernel is simply
rescaled by the factor $\tau_0^{-2}$, and the discretizations developed in the previous
sections apply directly. The new case is therefore the one in which $\tau$
varies over the spatial domain.

Before turning to the technical details, we recall that spatially varying variance
parameters are commonly used in applications of the SPDE approach, in particular in
non-stationary models implemented through \textsf{R-INLA}; see, for instance,
\citep{lindgren2015rinla,krainski2019advanced}. More generally,
non-stationary SPDE models with spatially varying coefficients have been considered in
\citep{ingebrigtsen2014spatial,fuglstad2015exploring,bakka2019}. The practical
motivation for the discretization proposed below comes from the finite element matrix
representations used in these implementations, where spatially varying coefficients are
represented on the mesh and the corresponding multiplication operators are assembled
through nodal or quadrature-based approximations. Our aim is to analyze this procedure
at the level of the covariance function.

There are two points that have not been fully addressed in the convergence theory for
the SPDE approach with non-stationary variance factors. The first is to obtain an
explicit convergence rate for the covariance approximation when $\tau$ varies over
$\cD$. The second is to understand how the regularity of $\tau$ influences this rate.

Let us now give a technical motivation for the main idea behind our discretization.
Assume that $\tau$ is bounded and bounded away from zero, analogously to the assumption
imposed on $\kappa$ in Assumption~\ref{ass_coeff}. Let $U^\beta$ denote the solution of
$$L^\beta v=\cW.$$
Then, the solution $u$ of \eqref{tau_non_st_case}, by definition, is given by
$u=\tau^{-1}U^\beta$.

Suppose that $U_h^\beta$ is a finite element approximation of $U^\beta$, whose
covariance can be approximated with order up to $2$ by the methods developed in the
previous sections. A natural candidate for approximating $u$ is then
$\tau^{-1}U_h^\beta$. However, this function does not, in general, belong to the finite
element space $V_h$. In particular, it is not determined solely by its coefficients at
the mesh nodes. To obtain an approximation that remains in $V_h$, we instead consider
$\cI_h(\tau^{-1}U_h^\beta)$, where $\cI_h$ denotes the Lagrange interpolation operator,
which is exact on linear polynomials.
	
Let $M_{\tau^{-1}}:L^2(\cD)\to L^2(\cD)$ denote the multiplication operator defined by
$M_{\tau^{-1}}f=\tau^{-1}f$. Since $L^\beta(\tau u)=\cW$ is equivalent to
$u=M_{\tau^{-1}}U^\beta$, where $U^\beta$ solves $L^\beta U^\beta=\cW$, the exact
covariance operator is $M_{\tau^{-1}}L^{-2\beta}M_{\tau^{-1}}$. We denote its kernel by
$\varrho^{\beta,\tau}$. Equivalently,
$\varrho^{\beta,\tau}(x,y)=\tau(x)^{-1}\varrho^\beta(x,y)\tau(y)^{-1}$ for
$\mu\otimes\mu$-a.e. $(x,y)\in\cD\times\cD$.

We now define the corresponding discrete approximation, including both the rational
approximation and the mass-lumped discretization. Let
$B_h:=\cI_h M_{\tau^{-1}}|_{V_h}$, and let $B_h^*$ denote its adjoint with respect to
the $L^2(\cD)$ inner product on $V_h$. We define $\varrho_{m,h}^{\beta,\tau}$ as the
kernel associated with the finite-rank operator
$B_h R_{2\beta,m,h}\Lambda_h B_h^*\Pi_h$.
That is,
\begin{equation}\label{tau_cov_approx}
    \left(B_h R_{2\beta,m,h}\Lambda_h B_h^*\Pi_h f\right)(x)
    =
    \int_{\cD}\varrho_{m,h}^{\beta,\tau}(x,y)f(y)\,\mu(dy),
    \qquad f\in L^2(\cD),
\end{equation}
with the equality understood in $L^2(\cD)$, or equivalently $\mu$-a.e. in $x$.

We then have the following convergence result.

\begin{theorem}\label{prop_fin}
Assume that $\tau$ is bounded and bounded away from zero, in the same sense discussed in
Assumption~\ref{ass_coeff}~C2), and suppose that $\tau^{-1}\in\cR^k$, where $\cR^k$ is the
multiplier space introduced in Assumption~\ref{assump2}. Under the assumptions of
Theorem~\ref{main_result_1}, assume also that
$2\beta>\max\{\frac{1}{\alpha},\frac{1}{2\alpha}+\frac{1}{2}\}$. Then, for every
$\eta<\min\{4\beta-\frac{1}{\alpha},2\}$, the estimate
\begin{equation}\label{fin_res_coverr}
    \begin{split}
        \left\|\varrho^{\beta,\tau}
        -\varrho_{m,h}^{\beta,\tau}\right\|_{L^2(\cD\times\cD)}
        &\lesssim
        \max\left\{
        \left\|\varrho^\beta-\widehat{\varrho}_h^\beta\right\|_{L^2(\cD\times\cD)},
        h^\eta
        \right\}
        + h^k \\
        &\quad
        +\mathbbm{1}_{\{2\beta\notin\mathbb{N}\}}
        h^{-1/\alpha}e^{-2\pi\sqrt{\{2\beta\}m}},
    \end{split}
\end{equation}
holds for every sufficiently small $h>0$.
In particular, if $k\geq 2$, the additional error caused by the discretization of the
variance-control factor does not reduce the convergence order obtained in
\eqref{err_rat_cov}.
\end{theorem}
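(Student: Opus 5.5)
The plan is to work at the level of Hilbert--Schmidt norms. The $L^2(\cD\times\cD)$ norm of a kernel difference equals the $L_2(L^2(\cD))$ norm of the corresponding operator difference. Write $M:=M_{\tau^{-1}}$, $T:=L^{-2\beta}$ and $T_h:=R_{2\beta,m,h}\Lambda_h$ (interpreted as $L_h^{-2\beta}\Lambda_h$ when $2\beta\in\bbN$). The exact covariance operator is then $MTM$ and the discrete one is $B_hT_hB_h^*\Pi_h$.

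\textbf{Step 1: decomposition.} I would insert intermediate operators and telescope:
\begin{align*}
MTM-B_hT_hB_h^*\Pi_h
&=M(T-T_h\Pi_h)M + (M-B_h\Pi_h)T_h\Pi_h M \\
&\quad + B_h\Pi_h T_h\Pi_h(M-\Pi_h B_h^*\Pi_h),
\end{align*}
using $T_h=T_h\Pi_h$ on the relevant range. Here one must check that $\Lambda_h$ only sees $\Pi_h f$, which holds since $(f,\psi_h)=(\Pi_hf,\psi_h)$.

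\textbf{Step 2: the first term.} It is bounded by $\|\tau^{-1}\|_{L^\infty}^2\|T-T_h\|_{L_2}$. By Theorem~\ref{rat_approx_theorem}, this gives exactly the first and last contributions in \eqref{fin_res_coverr}.

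\textbf{Step 3: the second term.} The key observation is that on $V_h$, $M-B_h=(I-\cI_h)M$. Assumption~\ref{assump2}(ii) gives $\|(I-\cI_h)(\tau^{-1}\psi)\|\lesssim h^k\|\tau^{-1}\|_{\cR^k}\|\psi\|_1$. Hence
\[
\|(M-B_h)T_h\Pi_hM\|_{L_2}\lesssim h^k\|T_h\|_{L_2(L^2,\dot H^1_L)}.
\]
So I need $\|L_h^{1/2}T_h\|_{L_2}$ bounded uniformly in $h$, i.e. $\sum_j\lambda_{j,h}^{1-4\beta}<\infty$ uniformly. Using the norm equivalences from admissibility and a discrete Weyl bound $\lambda_{j,h}\gtrsim j^\alpha$ (min--max together with Remark~\ref{remark_ord2_bilin_form}), this reduces to $\alpha(4\beta-1)>1$, i.e. $2\beta>\frac{1}{2\alpha}+\frac12$. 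This is precisely the extra hypothesis. For the rational approximant I would bound $r_{-2\beta,m}$ by $\lambda^{-2\beta}$ plus the uniform error, and the error piece is harmless (or is absorbed in the exponential term after multiplying by $h^{-1}$ via inverse inequality (iv); I would rather pass through $L_h^{-2\beta}$ directly).

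\textbf{Step 4: the third term.} This is the adjoint analogue of Step 3. Its Hilbert--Schmidt norm equals that of its adjoint, $(M-\Pi_hB_h\Pi_h)\,\Pi_hT_h^*\Pi_hB_h^*$. The prefactor $B_h\Pi_h$ is uniformly bounded by (iii). For $M-\Pi_hB_h\Pi_h$ acting on $V_h$-valued functions, I again get $\|(I-\cI_h)(\tau^{-1}\cdot)\|+\|(I-\Pi_h)M\cdot\|$, the latter bounded by the former since $\Pi_h$ is the best approximation. The relevant issue is the ordering of the factors: the $\dot H^1$ regularity must sit on the side where the interpolation error acts, so I will arrange the factorization so that $T_h$ (with its $\dot H^1$ smoothing) is adjacent to that error.

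\textbf{Main obstacle.} The main obstacle is this regularity step: establishing the uniform bound $\|T_h\|_{L_2(L^2,\dot H^1_L)}\lesssim1$ for the discrete, possibly lumped, operator, with $\Lambda_h$ and the $\langle\cdot,\cdot\rangle_h$ versus $L^2$ adjoint mismatches handled. I would first prove a discrete Weyl law for $\lambda_{j,h}$ via min--max and the admissibility constants, and use $\|\psi\|_1^2\simeq a_h(\psi,\psi)=\|L_h^{1/2}\psi\|_h^2$.
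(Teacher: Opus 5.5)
Your proposal is correct and follows essentially the same route as the paper. Your second and third terms are the paper's $\cA_1=(I-\cI_h)M_{\tau^{-1}}L_h^{-2\beta}\Lambda_hM_{\tau^{-1}}$ and $\cA_2$, handled the same way: interpolation estimate (ii), passing to the adjoint, and the uniform Hilbert--Schmidt bound on $L_h^{1/2-2\beta}\Lambda_h$ under $2\beta>\frac{1}{2\alpha}+\frac12$. The only difference is bookkeeping: the paper keeps $L_h^{-2\beta}\Lambda_h$ in those terms and isolates the rational error in a separate term $B_h(L_h^{-2\beta}\Lambda_h-R_{2\beta,m,h}\Lambda_h)B_h^*\Pi_h$, which is what your remark about passing through $L_h^{-2\beta}$ directly amounts to.
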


The estimate \eqref{fin_res_coverr} makes explicit how the regularity of the
variance-control factor affects the covariance approximation. The additional term
$h^k$ comes from replacing the product $\tau^{-1}U_h^\beta$ by its interpolant
$\cI_h(\tau^{-1}U_h^\beta)$, which is necessary in order to keep the approximation
inside the finite element space $V_h$. This interpolation error is present even without
rational approximation or mass lumping; it is intrinsic to the discretization of the
non-stationary multiplication factor. When $k\geq 2$, this term is of at least second
order and is therefore dominated by the spatial approximation order considered in
Theorem~\ref{main_result_1}.

\section{Examples and Applications}\label{sec:examples}

In this section we discuss how the results stated in the previous section apply for the particular cases of the spatial domain $\cD$ described in Example \ref{example_domain}. We begin with some required notation and a few general assumptions. The goal is to identify, in each setting, concrete choices of admissible inner products
and admissible bilinear forms in the sense of Definitions~\ref{def_admissible_inner_product}
and~\ref{def_admissible_bilinear_form}.

Let $d\in \mathbb{N}\setminus \{0\}$ be the dimension of the spatial domain $\cD$ in the sense that the relation \eqref{weyl_law} of Assumption \ref{ass_operator} is satisfied with $\alpha=\frac{2}{d}$. Further, let 
\begin{equation}\label{H1_def}
	H^{1}(\cD) := \left\{f\in L^{2}(\cD); \exists \nabla_{\cD} f \,\mbox{in the weak sense and}\, |\nabla_{\cD} f|_{\mathbb{R}^{d}}\in L^{2}(\cD)\right\},
\end{equation} 
denote the Sobolev space of all functions $f\in L^{2}(\cD)$ such that the weak gradient $\nabla_{\cD}f(x)=(f_{x_1}(x),\ldots,f_{x_d}(x))$ satisfies 
$|\nabla_{\cD} f|_{\mathbb{R}^{d}}:=\bigl(\sum_{i=1}^{d} f_{x_i}(x)^{2}\bigr)^{1/2} \in L^{2}(\cD)$. The notion of weak gradient is defined properly for each different type of spatial domain $\cD$ below. The space $H^{1}(\cD)$ is a Hilbert space when equipped with the inner product
\begin{equation}\label{H1_inner_prod}
	\langle f,g\rangle_{H^{1}(\cD)} := \int_{\cD} fg\,d\lambda + \int_{\cD} \nabla_{\cD} f \cdot \nabla_{\cD} g\,d\lambda.
\end{equation}
Inductively, if we assume well defined $H^{k}(\cD)$ for some $k\geq 2$, we may define the high order Sobolev space
\begin{equation}\label{Hk_def}
	H^{k+1}(\cD) := \left\{f\in H^{k}(\cD);\forall i=1,\ldots, d, \exists \nabla_{\cD} f_{x_i}\,\text{and}\,|\nabla_{\cD} f_{x_{i}}|_{\mathbb{R}^{d}}\in L^{2}(\cD) \right\}.
\end{equation} 
In this case $H^{k+1}(\cD)$ is a Hilbert space when equipped with the inner product
\begin{equation}\label{Hk_inner_prod}
	\langle f,g\rangle_{H^{k+1}(\cD)} := \inner{f}{g}_{H^{k}(\cD)} + \sum_{i=1}^{d} \inner{\nabla_{\cD} f_{x_i}}{\nabla_{\cD} g_{x_i}}_{L^2(\cD)}.
\end{equation}
In a similar fashion, for every $k\geq 1$ we may define $W^{k,\infty}(\cD)$ by replacing the space $L^{2}(\cD)$ in \eqref{Hk_def} and \eqref{H1_def} by $L^{\infty}(\cD)$. The space $W^{k+1,\infty}(\cD)$ is a Banach space when equipped with the norm
\begin{equation}\label{W1_infty_norm}
	\|f\|_{W^{k+1,\infty}(\cD)} := \|f\|_{W^{k,\infty}(\cD)} + \sum_{i=1}^{d} \esssup_{x\in \cD} |\nabla_{\cD} f_{x_i}(x)|.
\end{equation}

Throughout this section assume that $\dot{H}^{1}_{L}(\cD)\subseteq H^{1}(\cD)$ and we let $L:\mathscr{D}(L)\subset \dot{H}^{1}_{L}(\cD)\to L^2(\cD)$ be the second-order elliptic differential operator formally given by
\begin{equation}\label{form_expr_L}
	Lu=-\operatorname{div}_{\cD} \cdot(\boldsymbol{H} \nabla_{\cD} u)+\kappa^2 u, \quad(u \in \mathscr{D}(L)).
\end{equation}
The relation between $L$ and $a_{L}$ is one-to-one and this connection is given through \eqref{friedric_ext_rel}. Therefore, taking advantage of this correspondence, we focus on rigorously defining the expressions for $a_{L}$ and $a_{h}$. To do so, we make the following basic assumption concerning the coefficients $\boldsymbol{H}$ and $\kappa$ in \eqref{form_expr_L}.

\begin{assumption}[Coefficients]\label{ass_coeff}
	We assume the following general conditions on the coefficients:
	\begin{enumerate}[label=\roman*.]
		\item[C1)]\label{C1} $\boldsymbol{H}$ is a well defined map over $\cD$, such that $\boldsymbol{H}(x)$ is an endomorphism in the tangent space at $x$ (denoted by $T_{x}\cD$), that is represented by a symmetric matrix $(H_{i,j}(x))_{i,j}$. 
		Further, $\boldsymbol{H}$ is uniformly elliptic, i.e., there exists $H_{-}>0$ such that 
		\begin{equation*}
			 \boldsymbol{\xi} \cdot \boldsymbol{H}(x)\boldsymbol{\xi} \geq  |\boldsymbol{\xi}|_{\bbR^{d}}^2 H_{-}, \quad (\boldsymbol{\xi}\in T_{x}\cD, x\in\mathcal{D})
		\end{equation*}
		where $\boldsymbol{\xi}=(\xi_{1},\ldots,\xi_{d})$ is the vector-wise representation of the tangent vector at $x\in\cD$.
		\item[C2)]\label{C2} $\kappa : \mathcal{D}\to \mathbb{R}$ bounded and bounded away from zero, that is, there exist positive constants $\kappa_{-}$ and $\kappa_{+}$ such that 
			 $\kappa_{-} \le \kappa \le \kappa_{+}$.
		\item[C3)]\label{C3} We assume that $\kappa, H_{i,j} \in W^{1,\infty}(\cD)$, where  $H_{i,j}$, for every $1\le i,j\le d$, denote the coordinate functions of $\boldsymbol{H}$. 
	\end{enumerate}
\end{assumption}


The bilinear form corresponding to the operator $L$ is given by
\begin{equation}\label{bilin_form_L_exmp}
	a_{L}(\phi,\psi) := (\boldsymbol{H}\nabla_{\cD} \phi, \nabla_{\cD} \psi)_{L^2(\cD)} + (\kappa \phi, \kappa \psi)_{L^2(\cD)}, \quad (\phi,\psi \in \dot{H}^{1}_{L}(\cD)),
\end{equation}
where $(\boldsymbol{H}\nabla_{\cD} \phi, \nabla_{\cD} \psi)_{L^2(\cD)}:=\int_{\cD} \boldsymbol{H}\nabla_{\cD} \phi \cdot \nabla_{\cD} \psi\,d\lambda$ and since $\dot{H}^{1}_{L}(\cD)\subset H^{1}(\cD)$, the derivatives in \eqref{bilin_form_L_exmp} are understood in the weak sense.

The discussion conducted thus far provides sufficient foundation to ensure that Assumption \ref{ass_operator} is satisfied across all our examples. We now turn our attention to the construction and analysis of our discrete structures.
Given a family of triangulations $\{\cT_{h}\}_{h\in(0,1)}$ of $\overline{\cD}$ (the closure of $\cD$), indexed by the mesh width $h:=\max_{T\in\mathcal{T}_h} h_T$, where $h_T:= \mbox{diam}(T)$ is the diameter of the element $T\in \mathcal{T}_h$, 
we let $V_h \subset \dot{H}^{1}_{L}(\cD)$ be the (conforming) finite element space with continuous piecewise linear basis functions $\{\varphi_j\}_{j=1}^{N_h}$, with $N_h\in\mathbb{N}$. Additionally, denote by $\rho_T$ the radius of the largest ball inscribed in $T\in\mathcal{T}_h$. Further, we assume that the family of triangulations $\{\cT_{h}\}_{h\in(0,1)}$ is quasi-uniform, i.e., there exist constants $K_1,K_2>0$ such that $\rho_T \geq K_1 h_T$ and $h_T\geq K_2 h$ for all $T\in \mathcal{T}_h$ and $h\in(0,1)$. For each $\cD$ we let $\cI_{h}$ denote the corresponding Lagrange interpolator defined over $\mathscr{D}(\cI_{h})=\dot{H}^{1}_{L}(\cD)\cap C(\cD)$, where $C(\cD)$ denotes the space of continuous functions over $\cD$.

Before we proceed with the definition of the family of bilinear forms $a_{h}$ some basic notation and definitions are needed. Let $d$ be the dimension of $\cD$ and fix some $h\in (0,1)$. Also, let $V(T)=\{v_{j}(T)\}_{j=1}^{d+1}$ denote the vertices of $T$, $V=\bigcup_{T\in\cT_{h}} V(T)$ the set of nodes of $\cT_{h}$ and $b(T)$ the barycenter of the simplex $T$. Considering the segments that connect $b(T)$ to the midpoints of the edges of $T$ (or one-dimensional faces of $T$), we obtain a partition $\cA(T)=\left\{\cA_{z}(T)\right\}_{j=1}^{d+1}$ of $T$ into $d+1$ regions of equal $\lambda$-measure. That is, $\lambda(\cA_{j}(T))=\frac{\lambda(T)}{d+1}$, for all $z=1,\ldots, d+1$. The dual mesh of $\cT_{h}$ is the partition $\cB=\{B_{z}\}_{z\in V}$ of $S$ such that, given any node $z$ in the triangulation $\cT_{h}$, we put $B_{z}=\bigcup_{T\in \{L\in \cT_{h}; z \in V(L)\}}\cA_{z}(T)$.

The \textit{lumped mass inner product} on the trial space $V_{h}$, is the inner product 
\begin{equation}\label{quadrature}
	\begin{split}
	\<\phi,\psi\>_h &:= \sum_{z\in V} \sum_{T\in \{L\in \cT_{h}; z \in V(L)\}} \!\!\!\phi_{z}(z)\psi_{h}(z) \lambda(\cA_{z}(T)) 
	= \sum_{z\in V}\phi_{h}(z)\psi_{h}(z)\lambda(B_{z}).
\end{split}
\end{equation}
The inner product in \eqref{quadrature} is the prototype of the admissible inner
products considered in Definition~\ref{def_admissible_inner_product}. It is induced by
a quadrature rule that is exact on polynomials of degree at most $1$ on each element
$T\in\cT_h$, in the sense that
$\langle f,\mathbbm{1}_T\rangle_h=\int_T f\,d\lambda$ whenever $f$ is affine on $T$.
Moreover, in the nodal basis, this choice yields a diagonal mass matrix, which is the
mass-lumping property used in the matrix representations of the discretized covariance
operators. For further details, we refer the reader to \cite[Chapter 15]{thomee2007galerkin}
and \cite{jin2013error}.

Taking into account \eqref{quadrature}, for each $h\in (0,1)$, we define the family of bilinear forms $a_{h}:V_{h}\times V_{h}\to \bbR$ as
\begin{equation}\label{bilin_form_h_exmp}
	a_{h}(\phi,\psi) := \langle\boldsymbol{H}\nabla_{\cD} \phi, \nabla_{\cD} \psi\rangle_{h} + \langle\kappa \phi, \kappa \psi\rangle_{h}, \quad (\phi,\psi \in V_{h}),
\end{equation}
where 
$$\langle\boldsymbol{H}\nabla_{\cD} \phi, \nabla_{\cD} \psi\rangle_{h} := \sum_{z\in V} \sum_{T\in \{L\in \cT_{h}; z \in V(L)\}} \left[\boldsymbol{H}(z)\nabla_{\cD} \phi(z) \cdot \nabla_{\cD} \psi(z) \right] \lambda(\cA_{z}(T)).$$ 
Thus, $a_h$ is obtained by applying the same quadrature rule to the integrals defining
$a_L$. In the terminology of Definition~\ref{def_admissible_bilinear_form}, the family
$\{a_h\}_{h\in(0,1)}$ provides an admissible bilinear-form approximation of $a_L$.

\begin{remark}
We emphasize that, if desired, instead of a complete numerical approximation of both integrals, we may use a quadrature rule to approximate only one of the integrals on the right-hand side of \eqref{bilin_form_L_exmp}. In this case, the resulting bilinear form $a_{h}$ is still admissible, and therefore the results of this work remain valid. 
\end{remark}

\subsection{Euclidean domains}

We first consider the case in which $\cD$ is a bounded open subset of $\bbR^d$. This is
the most classical setting and serves as a reference point for the other examples
considered below. In this case, $L^2(\cD)$, $H^1(\cD)$, and the finite element spaces
$V_h$ are understood in the standard sense of Sobolev space theory; see, e.g.,
\cite{ciarlet2002finite}. Moreover, the Weyl-type condition \eqref{weyl_law} holds
with $\alpha=2/d$, where $d$ is the dimension of $\cD$.

The space $\dot H_L^1(\cD)$ encodes the boundary condition associated with the operator
$L$. In this section, we restrict attention to the two classical homogeneous cases:
pure Dirichlet and pure Neumann boundary conditions. Thus,
\begin{equation}\label{dirichlet_neumann_spaces}
\dot{H}^{1}_{L}(\cD) :=
\begin{cases}
    H^{1}_{0}(\cD):=\overline{C^{\infty}_c(\cD)}^{\|\cdot\|_{1}},
    & \text{in the Dirichlet case},\\
    H^{1}(\cD),
    & \text{in the Neumann case}.
\end{cases}
\end{equation}
Since we work with conforming finite element spaces, $V_h\subset\dot H_L^1(\cD)$, the
functions in $V_h$ inherit the corresponding boundary condition.

Let $\gamma_1:H^2(\cD)\to H^{-1/2}(\partial\cD)$ denote the trace operator associated
with the normal derivative; see, e.g., \cite[Section~1.5.3]{grisvard2011elliptic}.
The domain of $L$ is then specified by
\begin{equation}\label{domain_spec}
\mathscr{D}(L):=\dot H_L^2(\cD):=
\begin{cases}
    H^2(\cD)\cap H^1_0(\cD),
    & \text{in the Dirichlet case},\\
    \{u\in H^2(\cD):\gamma_1u=0\},
    & \text{in the Neumann case}.
\end{cases}
\end{equation}
In both cases, the bilinear form associated with $L$ is the one given in
\eqref{bilin_form_L_exmp}.

The fractional spaces $\dot H_L^\sigma(\cD)$ can be identified with suitable Sobolev or interpolation spaces, where such an identification means that the spaces are topologically isomorphic and have equivalent norms; see \cite[Lemma~2]{cox2020} and \cite[Proposition~1]{bolin2024covariance} for precise characterizations. In the following discussion, these identifications allow us to employ standard interpolation estimates in the finite element analysis and properties of the Borel functional calculus when studying fractional powers of the continuous and discrete operators.

The following lemma quantifies the accuracy of the lumped-mass quadrature rule in
approximating the classical $L^2(\cD)$ inner product. For the proof, we refer to
\citep{thomee2007galerkin,fix1972effects}.

\begin{lemma}\label{quad_err_est}
For $T\in\cT_h$, define the quadrature error functional
\begin{equation}\label{quad_error_func}
   E_T(\varphi):=(\varphi,\mathbbm{1}_T)-\langle\varphi,\mathbbm{1}_T\rangle_h,
   \qquad \varphi\in C(\cD).
\end{equation}
Then, for every $\phi_h,\psi_h\in V_h$, the following estimates hold:
\begin{enumerate}[label=\arabic*)]
    \item
    $|E_T(\kappa \phi_h\psi_h)|
    \lesssim h_T^2\|\kappa\|_{W^{2,\infty}(T)}
    \|\phi_h\|_{1,T}\|\psi_h\|_{1,T};$

    \item
    $|E_T(a_{ij}\partial_i\phi_h\,\partial_j\psi_h)|
    \lesssim h_T^2\|a_{ij}\|_{W^{2,\infty}(T)}
    \|\phi_h\|_{1,T}\|\psi_h\|_{1,T},$
    for each coordinate function $a_{ij}$ of $\boldsymbol H$.
\end{enumerate}
In particular, if $\kappa\equiv1$, then
\begin{equation}\label{L2_quadrature}
    |\langle\phi_h,\psi_h\rangle_h-(\phi_h,\psi_h)|
    \lesssim h^2
    \|\phi_h\|_{H^1(\cD)}
    \|\psi_h\|_{H^1(\cD)}.
\end{equation}
\end{lemma}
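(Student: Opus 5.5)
The estimates are local, one element $T\in\cT_h$ at a time, and I would prove them with the Bramble--Hilbert lemma applied to the quadrature error functional $E_T$ on a reference simplex. Two properties of $E_T$ drive the argument. First, $E_T$ vanishes on affine functions on $T$, because the lumped quadrature $\langle f,\mathbbm{1}_T\rangle_h=\sum_{z\in V(T)}f(z)\lambda(T)/(d+1)$ integrates polynomials of degree at most one exactly. Second, $|E_T(f)|\lesssim \lambda(T)\|f\|_{L^\infty(T)}$.

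I would map $T$ to the reference simplex $\hat T$ by an affine map $F_T$; on the manifold or graph case this is the usual parametrization of a flat element. Quasi-uniformity gives $|\det DF_T|\simeq h_T^d$ and $\|DF_T\|\lesssim h_T$. For a function $f\in W^{2,\infty}(T)$, the Bramble--Hilbert lemma on $\hat T$, together with annihilation of $P_1$, gives
\[
|E_T(f)|\lesssim |\det DF_T|\,|\hat f|_{W^{2,\infty}(\hat T)}\lesssim h_T^{d}h_T^2|f|_{W^{2,\infty}(T)}.
\]
Using the $W^{2,1}$ version of Bramble--Hilbert, which is legitimate since $W^{2,1}(\hat T)\hookrightarrow C(\hat T)$ for $d\le 3$, one gets more usefully $|E_T(f)|\lesssim h_T^2\sum_{|\gamma|=2}\|\partial^\gamma f\|_{L^1(T)}$.

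Item 1) follows from applying this to $f=\kappa\phi_h\psi_h$. Since $\phi_h$ and $\psi_h$ are affine on $T$, their second derivatives vanish, and Leibniz' rule leaves three kinds of terms: $\partial^2\kappa\,\phi_h\psi_h$, $\partial\kappa\,(\partial\phi_h\,\psi_h+\phi_h\,\partial\psi_h)$, and $\kappa\,\partial\phi_h\,\partial\psi_h$. Bounding each by $\|\kappa\|_{W^{2,\infty}(T)}$ times an $L^1(T)$ product and applying Cauchy--Schwarz gives $\|\phi_h\|_{1,T}\|\psi_h\|_{1,T}$.

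Item 2) is easier. The gradients $\partial_i\phi_h$ and $\partial_j\psi_h$ are constant on $T$, so $E_T(a_{ij}\partial_i\phi_h\partial_j\psi_h)=\partial_i\phi_h\partial_j\psi_h\,E_T(a_{ij})$. Since $|E_T(a_{ij})|\lesssim h_T^2\lambda(T)|a_{ij}|_{W^{2,\infty}(T)}$ and $\lambda(T)|\partial_i\phi_h\partial_j\psi_h|\le\|\nabla\phi_h\|_{L^2(T)}\|\nabla\psi_h\|_{L^2(T)}$, the bound follows. Note that \eqref{bilin_form_h_exmp} evaluates the gradient at the node $z$ from within element $T$, so the quadrature is taken elementwise and this is consistent.

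For \eqref{L2_quadrature}, take $\kappa\equiv1$. Then $\langle\phi_h,\psi_h\rangle_h-(\phi_h,\psi_h)=-\sum_T E_T(\phi_h\psi_h)$, where $\langle\cdot,\cdot\rangle_h$ is the lumped mass inner product \eqref{quadrature}. Summing the local bounds and applying the discrete Cauchy--Schwarz inequality over elements gives
\[
\sum_T h_T^2\|\phi_h\|_{1,T}\|\psi_h\|_{1,T}\le h^2\|\phi_h\|_{H^1}\|\psi_h\|_{H^1}.
\]
In fact the estimate sharpens to $h^2|\phi_h|_1|\psi_h|_1$, because only the term $\partial\phi_h\partial\psi_h$ survives; this is what Definition~\ref{def_admissible_inner_product} requires.

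The main obstacle is the Bramble--Hilbert step in the $W^{2,1}$ setting with integrable second derivatives on curved or graph domains. I would first reduce to flat simplices via the parametrization, and treat the geometric error of a surface triangulation separately if needed.
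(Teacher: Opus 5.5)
Your argument is the standard one behind the references the paper cites for this lemma: exactness of the vertex rule on $P_1(T)$, Bramble--Hilbert on the reference simplex, Leibniz' rule with $\partial^2\phi_h=\partial^2\psi_h=0$, Cauchy--Schwarz, and summation over elements. Item 2) is correct as written in every dimension, since it only uses the $W^{2,\infty}$ form of Bramble--Hilbert and the fact that the gradients are constant on $T$. For $d\le 2$, which covers metric graphs and the flat triangles of $\cD_h$, your proofs of item 1) and of the global bound for $\kappa\equiv1$ are also complete.

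The step that fails is the claim that $W^{2,1}(\hat T)\hookrightarrow C(\hat T)$ for $d\le 3$; this embedding holds only for $d\le 2$. In three dimensions point evaluation is unbounded on $W^{2,1}$. Take the smooth functions $(|x-v|^2+\varepsilon^2)^{-1/4}$, with $v$ a vertex of $\hat T$. Their second derivatives are bounded by $|x-v|^{-5/2}$, which is integrable in $\mathbb{R}^3$, so their $W^{2,1}(\hat T)$ seminorms stay bounded uniformly in $\varepsilon$. Their values at $v$, however, blow up. Hence $|E_T(f)|\lesssim h_T^2\sum_{|\gamma|=2}\|\partial^\gamma f\|_{L^1(T)}$ is false for $d=3$, even for smooth $f$. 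Since the Euclidean setting (D1) includes $d=3$, item 1), and with it the global $\kappa\equiv1$ bound, is not proved in that case.

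The repair uses only what you already have. Write your $W^{2,\infty}$ estimate as $|E_T(f)|\lesssim h_T^2\lambda(T)|f|_{W^{2,\infty}(T)}$. For $\phi_h$ affine on $T$, norm equivalence on $P_1(\hat T)$ and scaling give $\|\phi_h\|_{L^\infty(T)}\lesssim\lambda(T)^{-1/2}\|\phi_h\|_{L^2(T)}$, and constancy of the gradient gives $|\nabla\phi_h|=\lambda(T)^{-1/2}\|\nabla\phi_h\|_{L^2(T)}$. Leibniz' rule then yields $|\kappa\phi_h\psi_h|_{W^{2,\infty}(T)}\lesssim\lambda(T)^{-1}\|\kappa\|_{W^{2,\infty}(T)}\|\phi_h\|_{1,T}\|\psi_h\|_{1,T}$, which proves item 1) in any dimension. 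Alternatively, you can use the $W^{2,p}$ form of Bramble--Hilbert with $p>d/2$.
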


Lemma~\ref{quad_err_est} shows that the quadrature rule defining
$\langle\cdot,\cdot\rangle_h$ provides the required second-order control of the
quadrature error. Consequently, the lumped-mass inner product introduced in
\eqref{quadrature} is an admissible inner product in the sense of
Definition~\ref{def_admissible_inner_product}. Moreover, when the same quadrature rule
is used in the definition of $a_h$ in \eqref{bilin_form_h_exmp}, the family
$\{a_h\}_{h\in(0,1)}$ defines an admissible bilinear form in the sense of
Definition~\ref{def_admissible_bilinear_form}; see, for example, \cite[Remark~2.6]{almeida-sousa2026finite}. Thus, the assumptions required to apply
the abstract results of Section~\ref{sec:fem} are satisfied in the Euclidean setting.

In this Euclidean setting, the multiplier space $\cR^k$ appearing in
Assumption~\ref{assump2} can be taken to be the Sobolev--multiplier space
$W^{k,\infty}(\cD)$, with the natural compatibility conditions imposed by the boundary
condition associated with $L$. For $k\leq 2$, in the Neumann case this simply gives
$\cR^k=W^{k,\infty}(\cD)$. In the Dirichlet case, the finite element functions already
satisfy the homogeneous boundary condition, and multiplication by functions in
$W^{k,\infty}(\cD)$ preserves the required boundary behavior. The standard Lagrange
interpolation estimates and inverse inequalities on quasi-uniform meshes then imply
the approximation and stability properties required in Assumption~\ref{assump2}; see,
for example, \cite[Chapter~4]{brenner2008mathematical}.


For Euclidean domains of dimension $d\leq3$, \cite[Proposition~4]{cox2020} establishes the Galerkin covariance convergence order $\min\{4\beta-d/2,2\}$. Since $\alpha=2/d$ in this setting, Theorem~\ref{main_result_1} shows that mass lumping preserves this order. We therefore obtain the following consequence.

\begin{corollary}\label{cor1}
Let $d\in\{1,2,3\}$. Under the assumptions of Theorem~\ref{main_result_1}, the lumped-mass covariance approximation converges with order $\min\{4\beta-d/2,2\}$, which is the same order as for the corresponding Galerkin approximation. Moreover, analogous conclusions hold for the rational approximation and non-stationary variance-control settings of Theorems~\ref{rat_approx_theorem} and~\ref{prop_fin}, respectively.
\end{corollary}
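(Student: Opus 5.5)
The proof is a verification of the hypotheses of Theorems~\ref{main_result_1}, \ref{rat_approx_theorem} and \ref{prop_fin} in the Euclidean setting, followed by substitution of the known Galerkin rate.

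\emph{Hypotheses.} First I check the abstract hypotheses for domains of type (D1) with Dirichlet or Neumann boundary conditions.

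Assumption~\ref{ass_operator} holds with $\alpha=2/d$. Under Assumption~\ref{ass_coeff}, $L$ is a uniformly elliptic operator on a bounded domain, so it is self-adjoint and positive with compact resolvent, and the classical Weyl law gives \eqref{weyl_law}. The requirement $\alpha>1/2$ becomes $d<4$, which is exactly why $d\in\{1,2,3\}$.

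Assumption~\ref{assump2} holds with $\cR^k=W^{k,\infty}(\cD)$ and $\cI_h$ the Lagrange interpolant.
\begin{itemize}
\item Property (iv) is the inverse inequality on quasi-uniform meshes.
\item Property (ii) is obtained elementwise. On each $T$, $\psi$ is affine, so $|\tau\psi|_{H^2(T)}\lesssim \|\tau\|_{W^{2,\infty}(T)}\|\psi\|_{H^1(T)}$, and the standard interpolation estimate gives the $h^k$ bound for $k\le 2$.
\item Property (iii) follows from (ii) combined with (iv).
\end{itemize}

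Admissibility of $\langle\cdot,\cdot\rangle_h$ and of $a_h$ follows from Lemma~\ref{quad_err_est}, using $\|\cdot\|_1\simeq\|\cdot\|_{H^1(\cD)}$ on $V_h$. Norm equivalence of $\|\cdot\|_h$ and $\|\cdot\|$ holds because both are sums of elementwise quantities that are equivalent on $\mathbb{P}_1(T)$ with scaling-independent constants. Summing the elementwise error bounds over $T$ yields \eqref{cond_ord2_bilin_form}. Strictly speaking, the lemma asks for $W^{2,\infty}$ coefficients, which is stronger than C3); I would either assume this or invoke \cite[Remark~2.6]{almeida-sousa2026finite}.

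\emph{Rate.} \cite[Proposition~4]{cox2020} gives, for $d\le 3$,
\[
\|\varrho^\beta-\widehat\varrho_h^\beta\|_{L^2(\cD\times\cD)}\lesssim h^{\min\{4\beta-d/2,2\}}.
\]
Since $1/\alpha=d/2$, Theorem~\ref{main_result_1} bounds the lumped error by
\[
\max\{h^{\min\{4\beta-d/2,2\}},\,h^\eta\}
\]
for every $\eta<\min\{4\beta-d/2,2\}$. The lumped approximation therefore converges with order $\min\{4\beta-d/2,2\}$, understood in the same sense as the paper, namely that the order is approached from below.

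\emph{Rational and variance-control cases.} For the rational approximation, Theorem~\ref{rat_approx_theorem} gives the same bound plus the term $h^{-d/2}e^{-2\pi\sqrt{\{2\beta\}m}}$. Choosing $m\sim|\log h|^2$ makes this term negligible. For the variance-control model, Theorem~\ref{prop_fin} adds a term $h^k$. Taking $\tau^{-1}\in W^{2,\infty}$, so that $k=2$, this term does not reduce the order.

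\emph{Main obstacle.} The only delicate point is Assumption~\ref{assump2}(i)--(iii) in the Dirichlet case. There one must check that $\cI_h(\tau\psi)$ stays in $V_h\subset H^1_0$; this holds because $\tau\psi$ vanishes at the boundary nodes. One must also check that $\cR^k\subseteq\dot H^k_L$. That inclusion may fail for nonzero constants under Dirichlet conditions, so I would interpret $\cR^k$ loosely as the multiplier space, as the paper does.
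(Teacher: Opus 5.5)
Your proposal is correct and follows essentially the paper's route. You verify admissibility of the lumped inner product and of $a_h$ via Lemma~\ref{quad_err_est}, and Assumption~\ref{assump2} with $\cR^k=W^{k,\infty}(\cD)$ via Lagrange interpolation and inverse estimates. You then combine \cite[Proposition~4]{cox2020} with Theorem~\ref{main_result_1}, and likewise Theorems~\ref{rat_approx_theorem} and~\ref{prop_fin}, using $1/\alpha=d/2$.

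Your two caveats are accurate points the paper passes over, and you handle them appropriately. First, Lemma~\ref{quad_err_est} requires $W^{2,\infty}$ coefficients rather than only C3). Second, the inclusion $\cR^k\subseteq\dot H^k_L(\cD)$ fails for constants in the Dirichlet case, and this inclusion is never actually used in the proofs.
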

    
In Figure~\ref{eucd_exmp}, we show samples of Gaussian random fields generated using
the precision-based covariance approximation on a spatial domain containing
Switzerland. Following common practice in applications of the SPDE approach, we enlarge
the computational domain before sampling in order to reduce boundary effects inside
the region of interest; see, for example,
\citep{lindgren11,lindgren2015rinla,BK2020rational}. More precisely, we first
construct an extended polygonal domain containing Switzerland and generate a finite
element mesh on this domain; see Figure~\ref{eucd_exmp}, upper left. We then impose
Neumann boundary conditions on the boundary of the extended computational domain and
simulate the field for $\beta=0.5,0.8,1,1.5$, and $2$. Finally, the simulated fields
are restricted to the subdomain corresponding to Switzerland.

\begin{figure}[t]
	\begin{center}
		\includegraphics[width=1\linewidth]{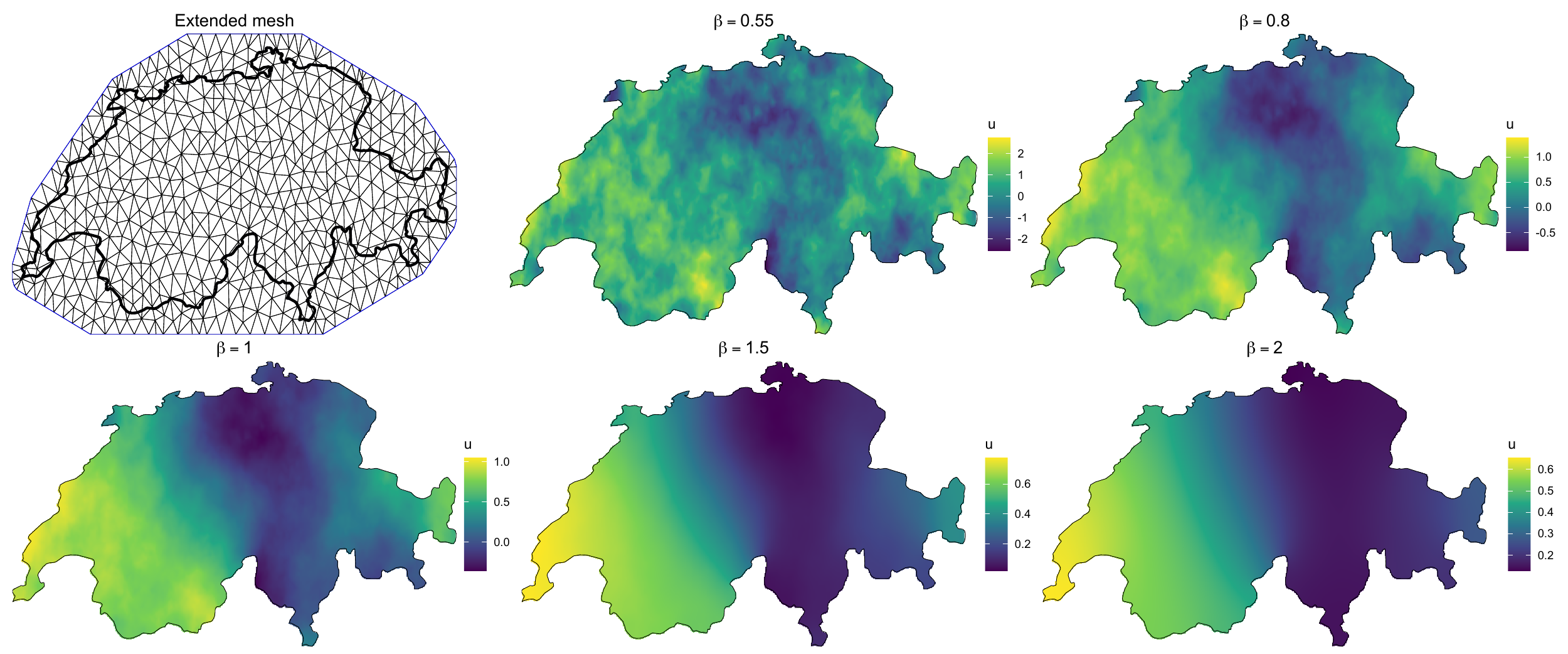}
	\end{center}
\caption{Extended computational domain and FEM mesh (top left), together with samples
of Gaussian random fields generated using the precision-based discretization with
$\boldsymbol{H}=I$, $\kappa=1$, and $\beta=0.5,0.8,1,1.5,2$. Neumann boundary
conditions are imposed on the boundary of the extended computational domain, and the
fields are then restricted to the subdomain corresponding to Switzerland. In all
simulations, the same realization of the Gaussian white noise is used.}
	\label{eucd_exmp}
\end{figure}

The simulations in Figure~\ref{eucd_exmp} were generated in \textsf{R}
\cite{R}. We used the \textsf{rnaturalearth} package \cite{rnaturalearth} to obtain
the Switzerland boundary and the \textsf{sf} package \cite{sf} for the corresponding
geometric operations. Mesh generation, construction of the extended domain, and
assembly of the finite element objects were performed using the \textsf{fmesher}
package \cite{fmesher}. The samples were generated using the \textsf{rSPDE} package \cite{rSPDEpackage}, which
implements the covariance-based rational approximation methodology of
\cite{bolin2024covariance}. We refer the reader to \cite{bolin2024covariance} for the
relevant matrix expressions for the precision operator, which are implemented in \textsf{rSPDE}.

\subsection{Surface domains}\label{sec:surface_domain} 

We now assume that $\cD$ is a compact, connected, orientable hypersurface in $\bbR^3$
without boundary. Let $g$ be the Riemannian metric on $\cD$ induced by the Euclidean
metric of $\bbR^3$. For a smooth function $f$ on $\cD$, the gradient
$\nabla_{\cD}f$ is the unique vector field satisfying
$\inner{\nabla_{\cD}f}{v}_{g}=v(f)$ for every smooth vector field $v$ on $\cD$.
We denote by $dV_g$ the Riemannian volume form associated with $g$. Since $g$ is induced
by the Euclidean embedding, the corresponding volume measure coincides with the
restriction of the $2$-dimensional Hausdorff measure $\cH^2$ to $\cD$. For a smooth
vector field $v$ on $\cD$, we denote by $\mathrm{div}_{\cD}v$ its Riemannian divergence,
which may be written as
$\mathrm{div}_{\cD}v=*^{-1}d(i_v dV_g)$, where $i_v$ denotes interior multiplication,
$d$ is the exterior derivative, and $*$ is the Hodge star operator; see, e.g.,
\cite{Lee2013Smooth}.

A function $f\in L^2(\cD)$ is said to have an $L^2(\cD)$ weak gradient if there exists
a vector field $\xi$ on $\cD$ with $|\xi|_g\in L^2(\cD)$ such that
$$
    \int_{\cD}\inner{\xi}{v}_g\,dV_g
    =
    -\int_{\cD} f\,\mathrm{div}_{\cD}v\,dV_g
$$
for every smooth vector field $v$ on $\cD$. In this case, $\xi$ is uniquely determined
and is denoted by $\nabla_{\cD}f$. With this notion of weak gradient, $H^1(\cD)$ is
precisely the space defined in \eqref{H1_def}. Since $\cD$ has no boundary, the spaces
$\dot H_L^1(\cD)$ and $\mathscr{D}(L)$ appearing in our assumptions are taken to be
$H^1(\cD)$ and $H^2(\cD)$, respectively. The spaces $W^{k,\infty}(\cD)$ are defined
analogously, replacing $L^2$-integrability by essential boundedness.

\begin{remark}
We emphasize that, while the above description is consistent with the Sobolev spaces $H^k(\mathcal{D})$ and $W^{1,\infty}(\mathcal{D})$ introduced earlier, the equivalent description adopted in \cite{dziuk2013finite}, which expresses differential structures and function spaces in terms of the ambient space $\mathbb{R}^3$, is more suitable in the finite element analysis. This approach facilitates the derivation of explicit and practically useful error estimates for families of surfaces $\mathcal{D}_h$ in the same ambient space that approximate $\mathcal{D}$.
\end{remark}

We assume that $\cD$ admits a tubular neighborhood $\cD_\delta$ of radius $\delta>0$.
Within this neighborhood, we consider a polygonal surface $\cD_h$, with vertices on
$\cD$, approximating $\cD$. Let $\eta(a)$ denote the unit normal vector to $\cD$ at
$a\in\cD$. For every $x\in\cD_\delta$, there exists a unique closest point
$a(x)\in\cD$ such that
$x=a(x)+d(x)\eta(a(x))$, where $d(x)$ is the signed distance to $\cD$. Restricting this
projection to $\cD_h$ defines a bijective map $\alpha:\cD_h\to\cD$.

The map $\alpha$ allows us to identify functions on $\cD_h$ with functions on $\cD$ by
means of the lift operator. If $F$ is defined on $\cD_h$, its lift to $\cD$ is
$F^\ell:=F\circ\alpha^{-1}$. Conversely, if $F$ is defined on $\cD$, we define its
inverse lift to $\cD_h$ by $F^{-\ell}:=F\circ\alpha$.

Since $\alpha$ restricted to each simplex provides a parametrization of its image on
$\cD$, the bilinear form $a_L$ in \eqref{bilin_form_L_exmp} can be written as
\begin{equation}\label{bilin_form_L_surf}
    \begin{split}
        a_L(f,g)
        &=\int_{\cD}
        \left(\boldsymbol{H}\nabla_{\cD}f\cdot\nabla_{\cD}g+\kappa^2fg\right)\,dA \\
        &=\sum_{T\in\cT_{\cD_h}}
        \int_T
        (D\alpha)^{-1}\boldsymbol{H}^{-\ell}(D\alpha)^{-\top}
        \nabla_{\cD_h}f^{-\ell}\cdot\nabla_{\cD_h}g^{-\ell}\,
        \delta_h\,dA_h \\
        &\quad+
        \sum_{T\in\cT_{\cD_h}}
        \int_T
        (\kappa^{-\ell})^2 f^{-\ell}g^{-\ell}\,\delta_h\,dA_h,
        \qquad f,g\in H^1(\cD),
    \end{split}
\end{equation}
where $D\alpha$ denotes the Jacobian of $\alpha$, $\delta_h:=|\det(D\alpha)|$, and
$dA$ and $dA_h$ denote the surface measures on $\cD$ and $\cD_h$, respectively.

The coefficients appearing in the pullback representation \eqref{bilin_form_L_surf}
satisfy the same type of boundedness and ellipticity conditions as those imposed in
Assumption~\ref{ass_coeff}, provided $h$ is sufficiently small. However, in the
classical finite element method on closed surfaces, one usually avoids the full
pullback form \eqref{bilin_form_L_surf}. Instead, one works directly on the polygonal
surface $\cD_h$ with the simpler bilinear form
\begin{equation}\label{bilin_form_L_surf_2}
    a_{L,h}(\phi_h,\psi_h)
    =
    \int_{\cD_h}
    \boldsymbol{H}^{-\ell}\nabla_{\cD_h}\phi_h\cdot\nabla_{\cD_h}\psi_h\,dA_h
    +
    \int_{\cD_h}
    (\kappa^{-\ell})^2\phi_h\psi_h\,dA_h,
\end{equation}
for $\phi_h,\psi_h\in V_h$.
After lifting, the corresponding discrete structures on $V_h^\ell$ approximate the
structures defined by $a_L$ on $\cD$; this is the standard surface finite element
viewpoint, see, e.g., \cite{dziuk2013finite}.

We now introduce the lumped-mass version of \eqref{bilin_form_L_surf_2}. The quadrature
rule defining $\langle\cdot,\cdot\rangle_h$ in \eqref{quadrature}, now applied on the
triangulated surface $\cD_h$ with the measure $dA_h$, gives an admissible inner product
on $V_h$ in the sense of Definition~\ref{def_admissible_inner_product}. Using this
quadrature rule in the two integrals in \eqref{bilin_form_L_surf_2}, we define
\begin{equation}\label{bilin_form_h_surf}
    a_h(\phi_h,\psi_h)
    =
    \bigl\langle
    \boldsymbol{H}^{-\ell}\nabla_{\cD_h}\phi_h,
    \nabla_{\cD_h}\psi_h
    \bigr\rangle_h
    +
    \bigl\langle
    \kappa^{-\ell}\phi_h,
    \kappa^{-\ell}\psi_h
    \bigr\rangle_h,
    \qquad \phi_h,\psi_h\in V_h.
\end{equation}
The family $\{a_h\}_{h\in(0,1)}$ is therefore an admissible bilinear form in the sense
of Definition~\ref{def_admissible_bilinear_form}. This follows from the quadrature
error estimate in Lemma~\ref{quad_err_est}, combined with the standard geometric
estimates for surface finite elements; see, e.g., \cite{dziuk2013finite}. Hence, the
abstract results of Section~\ref{sec:fem} apply to the lifted approximations on
$\cD$.

To describe the precision-based covariance approximation on surface domains, let
$\mathcal{L}_h:L^2(\cD_h)\to L^2(\cD)$ denote the lifting operator,
$\mathcal{L}_hF:=F^\ell$, and let
$\mathcal{L}_h^*:L^2(\cD)\to L^2(\cD_h)$ be its adjoint. The Galerkin-based covariance
kernel on $\cD$ is then defined as the kernel $\varrho_h^\beta$ associated with the
lifted operator $\mathcal{L}_h L_h^{-2\beta}\Lambda_h\mathcal{L}_h^*$. That is,
\begin{equation*}
    \left(\mathcal{L}_h L_h^{-2\beta}\Lambda_h\mathcal{L}_h^* f\right)(x)
    =
    \int_{\cD}\varrho_h^\beta(x,y)f(y)\,dA(y),
    \qquad f\in L^2(\cD),
\end{equation*}
with the equality understood in $L^2(\cD)$, or equivalently for $dA$-a.e. $x\in\cD$.

\begin{remark}\label{remark_cov_surf}
Let $\rho_h^\beta$ denote the covariance kernel on $\cD_h$ associated with
$L_h^{-2\beta}\Lambda_h$. Then the corresponding covariance kernel on the original
surface $\cD$ is obtained by lifting both variables:
$$
    \varrho_h^\beta(x,y)
    =
    \rho_h^\beta(\alpha^{-1}(x),\alpha^{-1}(y)),
    \qquad x,y\in\cD.
$$
Thus, in practice, one first computes the covariance kernel associated with
$L_h^{-2\beta}\Lambda_h$ on the polygonal surface $\cD_h\times\cD_h$, and then lifts it
to $\cD\times\cD$. This mirrors the standard strategy in surface finite element
methods: the discrete problem is formulated on the simpler approximating surface
$\cD_h$, and the resulting discrete objects are transported back to the original
surface through the lift map. The presence of the lifting operator $\mathcal{L}_h$ and
its adjoint $\mathcal{L}_h^*$ in the operator formulation is what makes this
covariance-level lifting precise.
\end{remark}

Analogously, the Galerkin-based covariance kernel is defined as the kernel
$\widehat{\varrho}_h^\beta$ associated with the lifted operator
$\mathcal{L}_h\widehat L_h^{-2\beta}\Pi_h\mathcal{L}_h^*$, where $\widehat L_h$ is
defined by \eqref{galerkinLdiscretization} with $a_{L,h}$ in place of $a_L$. That is,
$\widehat{\varrho}_h^\beta$ is the covariance kernel obtained from the standard
surface finite element discretization on $\cD_h$ and then lifted to $\cD$.

The proof of Theorem~\ref{main_result_1}, together with the same argument used to
estimate the term $\cI_3$ in the proof of Theorem~\ref{prop_fin}, shows that the
precision-based covariance approximation and the Galerkin-based covariance
approximation differ by an error of order $\min\{4\beta-1,2\}$. More precisely, for
every $\eta<\min\{4\beta-1,2\}$,
\begin{equation}\label{cov_surf_conv_rate_inft}
    \|\widehat{\varrho}_h^\beta-\varrho_h^\beta\|_{L^2(\cD\times\cD)}
    \lesssim h^\eta
\end{equation}
for all sufficiently small $h$.

It remains to quantify the convergence of the lifted Galerkin covariance kernel
$\widehat{\varrho}_h^\beta$ to the exact covariance kernel $\varrho^\beta$ on the
surface $\cD$. Several works establish convergence results for Galerkin approximations
of fractional SPDEs on surfaces driven by Gaussian white noise; see, for example,
\citep{bonito2024numerical,doi:10.1137/21M1400717,herrmann2020multilevel}. However,
these works do not state the covariance-kernel estimate in the form needed here. Because the convergence condition required for the covariance in the surface case is rather lengthy and lies beyond the main scope of the present work, we omit its full derivation here. This condition will be addressed in the forthcoming work of the authors. Nevertheless, for completeness, we state the result below.


\begin{theorem}[Surface Galerkin covariance estimate]\label{thm_surf_galerkin_cov}
Let $L=-\nabla_{\cD}\cdot(\boldsymbol H\nabla_{\cD})+\kappa^2$, and let
$\varrho^\beta$ be the covariance kernel associated with $L^{-2\beta}$ on a surface
domain $\cD$ satisfying condition (D2) in Example~\ref{example_domain}. Let
$\widehat{\varrho}_h^\beta$ be the lifted Galerkin covariance kernel associated with
$\mathcal{L}_h\widehat L_h^{-2\beta}\Pi_h\mathcal{L}_h^*$, where $\widehat L_h$ is
defined by \eqref{galerkinLdiscretization} with $a_{L,h}$ in place of $a_L$. Then, for
$\beta>1/2$ and every $\eta<\min\{4\beta-1,2\}$,
\begin{equation}\label{cov_surf_conv_rate}
    \|\varrho^\beta-\widehat{\varrho}_h^\beta\|_{L^2(\cD\times\cD)}
    \lesssim h^\eta
\end{equation}
for all sufficiently small $h$.
\end{theorem}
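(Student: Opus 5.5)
We would prove Theorem~\ref{thm_surf_galerkin_cov} by reducing the kernel error to a Hilbert--Schmidt operator error,
$$\|\varrho^\beta-\widehat{\varrho}_h^\beta\|_{L^2(\cD\times\cD)}=\|L^{-2\beta}-\mathcal{L}_h\widehat L_h^{-2\beta}\Pi_h\mathcal{L}_h^*\|_{L_2(L^2(\cD))}.$$
We then split the error into a \emph{geometric} part and a \emph{Galerkin} part. To do this, introduce the ``ideal'' surface Galerkin operator $\widetilde L_h$ on the lifted space $V_h^\ell\subset H^1(\cD)$. It is defined through \eqref{galerkinLdiscretization} with the exact form $a_L$ and the exact $L^2(\cD)$ inner product, and $\widetilde\Pi_h$ denotes the $L^2(\cD)$-projection onto $V_h^\ell$. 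The triangle inequality gives two terms:
$$\|L^{-2\beta}-\widetilde L_h^{-2\beta}\widetilde\Pi_h\|_{L_2}\qquad\text{and}\qquad\|\widetilde L_h^{-2\beta}\widetilde\Pi_h-\mathcal{L}_h\widehat L_h^{-2\beta}\Pi_h\mathcal{L}_h^*\|_{L_2}.$$

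The first term is a standard conforming Galerkin covariance estimate on the smooth closed surface $\cD$. Here elliptic regularity $H^2(\cD)=\mathscr D(L)$ holds, and the Weyl law holds with $\alpha=1$. We would follow the argument of \cite[Proposition~4]{cox2020} essentially verbatim, since it only uses the following ingredients:
\begin{itemize}
\item the Ritz projection error $\|(I-R_h)v\|\lesssim h^{\sigma}\|v\|_{\sigma}$ for $\sigma\le2$;
\item inverse inequalities on the quasi-uniform lifted mesh;
\item the Dunford--Taylor (Balakrishnan) representation of $\lambda^{-2\beta}$;
\item the eigenvalue bound $\widetilde\lambda_{j,h}\ge\lambda_j$ from min-max.
\end{itemize}
Summing the resulting bound over eigenfunctions with the Weyl law gives order $\min\{4\beta-1,2\}-\epsilon$, and the condition $\beta>1/2$ ensures that the relevant Hilbert--Schmidt sums converge.

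For the second term, we compare the forms. Under lifting, the surface finite element geometric estimates of \cite{dziuk2013finite} give $|1-\delta_h|\lesssim h^2$ and $\|(D\alpha)^{-1}\boldsymbol H^{-\ell}(D\alpha)^{-\top}-\boldsymbol H^{-\ell}\|_{L^\infty}\lesssim h^2$. Together these yield
$$|a_L(\phi^\ell,\psi^\ell)-a_{L,h}(\phi,\psi)|\lesssim h^2\|\phi^\ell\|_1\|\psi^\ell\|_1,\qquad |(\phi^\ell,\psi^\ell)-(\phi,\psi)_{L^2(\cD_h)}|\lesssim h^2\|\phi^\ell\|\,\|\psi^\ell\|.$$
Transported to $V_h^\ell$, the pair $(a_{L,h},(\cdot,\cdot)_{L^2(\cD_h)})$ is therefore an admissible bilinear form together with an admissible inner product, in the sense of Definitions~\ref{def_admissible_bilinear_form} and~\ref{def_admissible_inner_product}. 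Moreover, $\mathcal L_h\Pi_h\mathcal L_h^*$ plays exactly the role of $\Lambda_h$ relative to $\widetilde\Pi_h$.

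Given this, we would reuse the perturbation argument of the proof of Theorem~\ref{main_result_1} in the lifted space. That argument is an $O(h^2)$ form perturbation propagated through the resolvent integral for the fractional power, using $\|\psi\|_1\lesssim h^{-1}\|\psi\|$, and it bounds the second term by $h^\eta$. The one extra ingredient is the $O(h^2)$ bound on $\mathcal L_h^*\mathcal L_h-I$, which is handled like the term $\cI_3$ in the proof of Theorem~\ref{prop_fin}.

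The main obstacle is making this perturbation step \emph{uniform in the spectrum}. The $h^2\|\cdot\|_1\|\cdot\|_1$ form error must be converted, inside the Balakrishnan integral over $t\in(0,\infty)$, into a Hilbert--Schmidt bound whose high-frequency part, summed using the Weyl law, still gives $h^{\min\{4\beta-1,2\}-\epsilon}$ rather than losing a power of $h$ to the inverse inequality. The plan is to split the eigen-sum at the index $j\sim h^{-2}$ (equivalently at $\lambda\sim h^{-2}$):
\begin{itemize}
\item below the cutoff, use the $h^2\lambda$ relative perturbation of eigenvalues;
\item above the cutoff, simply use the tail $\sum_{j\gtrsim h^{-2}} j^{-4\beta}\sim h^{2(4\beta-1)}$.
\end{itemize}
This split is precisely what produces the threshold $4\beta-1$ in the rate.
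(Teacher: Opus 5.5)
The paper gives no proof of this statement. It is recorded ``for completeness'' and its derivation is deferred to forthcoming work, so there is nothing to compare your argument with. Judged on its own, your decomposition is sound, and it closes more cleanly than you indicate.

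\textbf{The second term reduces to Theorem~\ref{main_result_1}.} You note that $\mathcal{L}_h\Pi_h\mathcal{L}_h^*$ plays the role of $\Lambda_h$; push this one step further. On $V_h^\ell$ define $\langle\phi^\ell,\psi^\ell\rangle_h:=(\phi,\psi)_{L^2(\cD_h)}$ and $a_h(\phi^\ell,\psi^\ell):=a_{L,h}(\phi,\psi)$. Then:
\begin{itemize}
\item $\Lambda_h=\mathcal{L}_h\Pi_h\mathcal{L}_h^*$ exactly;
\item the operator defined by \eqref{a_lump} is $\mathcal{L}_h\widehat L_h\mathcal{L}_h^{-1}$;
\item $\mathcal{L}_h$ is an isometry from $(V_h,(\cdot,\cdot)_{L^2(\cD_h)})$ onto $(V_h^\ell,\langle\cdot,\cdot\rangle_h)$.
\end{itemize}
Hence $\mathcal{L}_h\widehat L_h^{-2\beta}\Pi_h\mathcal{L}_h^*=L_h^{-2\beta}\Lambda_h$ \emph{identically}. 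Theorem~\ref{main_result_1} itself, applied with trial spaces $V_h^\ell$, then bounds the surface error by the maximum of $h^\eta$ and the error of your ideal operator $\widetilde L_h^{-2\beta}\widetilde\Pi_h$.

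Its proof needs only item (iv) of Assumption~\ref{assump2}, the Weyl law, and the two admissibility conditions. Item (iv) is inherited from the flat mesh via quasi-uniformity and uniform norm equivalence under the lift. Admissibility follows from $|1-\delta_h|\lesssim h^2$ and the comparison of tangential gradients in \cite{dziuk2013finite}. Your $L^\infty$ bound on the coefficient matrices holds only as a bound on quadratic forms over tangent vectors of $\cD_h$. It also requires $\boldsymbol H^{-\ell}$ to annihilate the normal of $\cD$, because tangent vectors of $\cD_h$ have $O(h)$ normal components. No separate, $\cI_3$-style control of $\mathcal{L}_h^*\mathcal{L}_h-I$ is needed; $\delta_h$ enters only through admissibility.

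\textbf{The first term carries all the substance.} This is the conforming Galerkin covariance estimate for $V_h^\ell$ on the exact surface, which you settle by asserting that \cite[Proposition~4]{cox2020} carries over verbatim. That is plausible, but it is exactly the part the paper calls lengthy. A proof must check on the curved surface every hypothesis that argument uses:
\begin{itemize}
\item $H^2(\cD)$-regularity of $L$ under Assumption~\ref{ass_coeff};
\item the identification $\dot H^\sigma_L(\cD)\cong H^\sigma(\cD)$ for $0\le\sigma\le2$;
\item approximation and inverse estimates for $V_h^\ell$ via the lifted interpolant;
\item the Weyl law with $\alpha=1$.
\end{itemize}
This term is also where the threshold $4\beta-1$ is generated, through the modes beyond $N_h\simeq h^{-2}$ and the poorly resolved modes near that level.

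\textbf{The ``main obstacle'' paragraph is misdirected and not needed.} In the second term both operators act on the same $N_h$-dimensional space, so there is no tail $\sum_{j\gtrsim h^{-2}}j^{-4\beta}$ to sum. Moreover, a split using only eigenvalue perturbations cannot bound the Hilbert--Schmidt norm of the difference of functions of two operators with different eigenvectors. On the sphere, for instance, eigenvalues have multiplicity $2\ell+1$, so eigenvector bounds mode by mode are unavailable. The resolvent and Schatten--H\"older argument of Theorem~\ref{main_result_1} avoids eigenvectors altogether and needs nothing added.

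In fact the second term is better than you think. Here the mass perturbation is controlled in $L^2(\cD)$,
$|(\phi^\ell,\psi^\ell)-\langle\phi^\ell,\psi^\ell\rangle_h|\lesssim h^2\|\phi^\ell\|\,\|\psi^\ell\|$,
not merely in $\dot H^1_L(\cD)$ as for mass lumping. So the relative eigenvalue perturbation is $O(h^2)$, not $O(h^2\lambda)$. The resolvent differences are then bounded in $L_2$ by $h^2t^{-1/2}$, which is integrable against $t^{-2\beta}$ on $[r,\infty)$. Letting $R\to\infty$ in \eqref{knjnroaa_fp} shows the second term is $O(h^2)$. The rate-limiting exponent $4\beta-1$ therefore comes from the first term, not from any split in the second.
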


Combining Theorem~\ref{thm_surf_galerkin_cov} with the estimate \eqref{cov_surf_conv_rate_inft} shows that the precision-based covariance approximation on surfaces preserves the corresponding Galerkin convergence rate. We record this as the following corollary

\begin{corollary}\label{cor2}Under the assumptions of Theorem~\ref{thm_surf_galerkin_cov}, for every $\eta<\min\{4\beta-1,2\}$ and all sufficiently small $h$, \begin{equation}\label{cov_surf_conv_rate_inft_2}\|\varrho^\beta-\varrho_h^\beta\|_{L^2(\cD\times\cD)}\lesssim h^\eta.\end{equation}
Furthermore, similar conclusions hold for the rational approximation and non-stationary variance-control settings of Theorems~\ref{rat_approx_theorem} and~\ref{prop_fin}, respectively.
\end{corollary}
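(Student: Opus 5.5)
The estimate follows from the triangle inequality
\[
\|\varrho^\beta-\varrho_h^\beta\|_{L^2(\cD\times\cD)}\le \|\varrho^\beta-\widehat{\varrho}_h^\beta\|_{L^2(\cD\times\cD)}+\|\widehat{\varrho}_h^\beta-\varrho_h^\beta\|_{L^2(\cD\times\cD)}.
\]
The first term is bounded by Theorem~\ref{thm_surf_galerkin_cov}, which gives $h^\eta$ for every $\eta<\min\{4\beta-1,2\}$ once $\beta>1/2$. The second term is the content of \eqref{cov_surf_conv_rate_inft}. It remains to justify that estimate, i.e.\ to show that the lifted precision-based and lifted Galerkin kernels differ by $O(h^\eta)$. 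Since both are kernels of operators of the form $\mathcal{L}_h(\cdot)\mathcal{L}_h^*$, the kernel $L^2(\cD\times\cD)$ norm equals the Hilbert--Schmidt norm. Moreover, $\mathcal{L}_h$ and $\mathcal{L}_h^*$ are uniformly bounded, because $\delta_h$ and $\delta_h^{-1}$ are bounded uniformly in $h$. The problem therefore reduces to bounding
\[
\|\widehat L_h^{-2\beta}\Pi_h-L_h^{-2\beta}\Lambda_h\|_{L_2(L^2(\cD_h))}
\]
on the polygonal surface.

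This is exactly the discrete comparison carried out in the proof of Theorem~\ref{main_result_1}, posed on $\cD_h$ with $a_{L,h}$ playing the role of $a_L$ and with $dA_h$ in place of $\mu$. I would check its hypotheses uniformly in $h$ as follows.
\begin{enumerate}[label=(\alph*)]
\item The lumped inner product is admissible with respect to $(\cdot,\cdot)_{L^2(\cD_h)}$. This follows from Lemma~\ref{quad_err_est} applied elementwise on the flat triangles of $\cD_h$.
\item $a_h$ in \eqref{bilin_form_h_surf} is $O(h^2)$-close to $a_{L,h}$. This follows from the same lemma, using that $\boldsymbol H^{-\ell},\kappa^{-\ell}$ have $W^{2,\infty}$ bounds on each element uniformly in $h$; here I would assume the coefficients are smooth enough for this.
\item An inverse inequality holds on $V_h$, by quasi-uniformity.
\item Discrete Weyl bounds $\widehat\lambda_{j,h}\gtrsim j^{2/d}$ with $d=2$ (so $\alpha=1$) hold, via min--max and the uniform norm equivalence between $\cD_h$ and $\cD$ under lifting.
\end{enumerate}

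Within that comparison, I would write the difference using the change of inner product. Denote by $M_h:V_h\to V_h$ the operator with $\langle\phi,\psi\rangle_h=(M_h\phi,\psi)$. Then $\Lambda_h=M_h^{-1}\Pi_h$, $L_h$ corresponds to $a_h$, and $\widehat L_h$ to $a_{L,h}$. The resolvents compare as $L_h^{-1}\Lambda_h-\widehat L_h^{-1}\Pi_h=O(h^2)$ in $L(\dot H^{-1},\dot H^{1})$-type norms. This comes from the second-order closeness of both the inner product and the form: the difference is controlled by $h^2\|\cdot\|_1\|\cdot\|_1$. For fractional powers, I would use a Dunford--Taylor (Balakrishnan) representation of $t^{-2\beta}$ together with the resolvent identity. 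This produces
\[
\sum_j \min\{1,h^2\lambda_{j,h}\}^2\lambda_{j,h}^{-4\beta+\text{loss}}.
\]
By the Weyl bound with $\alpha=1$, that sum is $O(h^{2\eta})$ for $\eta<\min\{4\beta-1,2\}$.

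The lifted versions then add the geometric consistency term $\mathcal{L}_h\mathcal{L}_h^*-I$, of order $h^2$ in operator norm by $\|1-\delta_h\|_{L^\infty}\lesssim h^2$. This is handled exactly as the term $\cI_3$ in Theorem~\ref{prop_fin}: an operator-norm $O(h^2)$ perturbation composed with a Hilbert--Schmidt covariance.

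\textbf{Main obstacle.} The hardest step is the uniformity of the Hilbert--Schmidt bound for the fractional power difference across the changing surfaces $\cD_h$. In particular, the discrete Weyl law and the equivalence of the $\dot H^1$ norms on $V_h$ versus $V_h^\ell$ must hold with constants independent of $h$. I would establish these first via the standard surface FEM estimates $\|\delta_h-1\|_\infty+\|(D\alpha)^{-1}(D\alpha)^{-\top}-P\|_\infty\lesssim h^2$ from \cite{dziuk2013finite}. The extensions to the rational and $\tau$-settings then follow verbatim from Theorems~\ref{rat_approx_theorem} and~\ref{prop_fin}, adding the corresponding extra terms.
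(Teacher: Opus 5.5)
Your proposal is correct and follows the paper's route: the triangle inequality combining Theorem~\ref{thm_surf_galerkin_cov} with \eqref{cov_surf_conv_rate_inft}, where the latter comes from the discrete comparison in the proof of Theorem~\ref{main_result_1}, posed on $\cD_h$ and transported through the uniformly bounded lifts $\mathcal{L}_h,\mathcal{L}_h^*$ (this sandwich bound is exactly what the paper means by reusing the $\cI_3$ argument). Your checks (a)--(d), notably the lifted min--max Weyl bound replacing Proposition~\ref{eigenval}~2), spell out what the paper leaves implicit; the only superfluous step is the geometric term $\mathcal{L}_h\mathcal{L}_h^*-I$, since $\widehat{\varrho}_h^\beta$ and $\varrho_h^\beta$ are wrapped in the same $\mathcal{L}_h(\cdot)\mathcal{L}_h^*$, so no such term arises in their difference (it would only enter the comparison with $\varrho^\beta$, which Theorem~\ref{thm_surf_galerkin_cov} already covers).
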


Finally, in the surface case, the multiplier space $\cR^k$ appearing in
Assumption~\ref{assump2} can again be taken as $W^{k,\infty}(\cD)$. The verification of
the interpolation and stability properties follows analogously to the Euclidean case:
after pulling back to each flat element $T\in\cT_{\cD_h}$, the estimates reduce to the
standard local finite element interpolation estimates and inverse inequalities, with
additional geometric factors controlled by the surface approximation. These geometric
factors are uniformly bounded for sufficiently small $h$ by the standard estimates for
surface finite elements; see, e.g., \cite{dziuk2013finite}. Thus the abstract
assumptions involving $\cR^k$ are satisfied in this setting as well.

Figure~\ref{surf_exmp} illustrates realizations of Gaussian random fields with
precision-based covariance on several approximated surface domains satisfying condition
(D2) in Example~\ref{example_domain}. The numerical implementation uses the
\textsf{FEniCSx} finite element framework, through the \texttt{dolfinx} library
\cite{baratta2023dolfinx}. Mesh generation was performed with \texttt{gmsh}
\cite{geuzaine2009gmsh}, while the variational formulations were specified using
\texttt{ufl} \cite{alnaes2014ufl}. The resulting linear algebra operations were carried
out using \texttt{petsc4py} \cite{dalcin2011petsc4py}.

\begin{figure}[t]
	\begin{center}
		\includegraphics[width=1\linewidth]{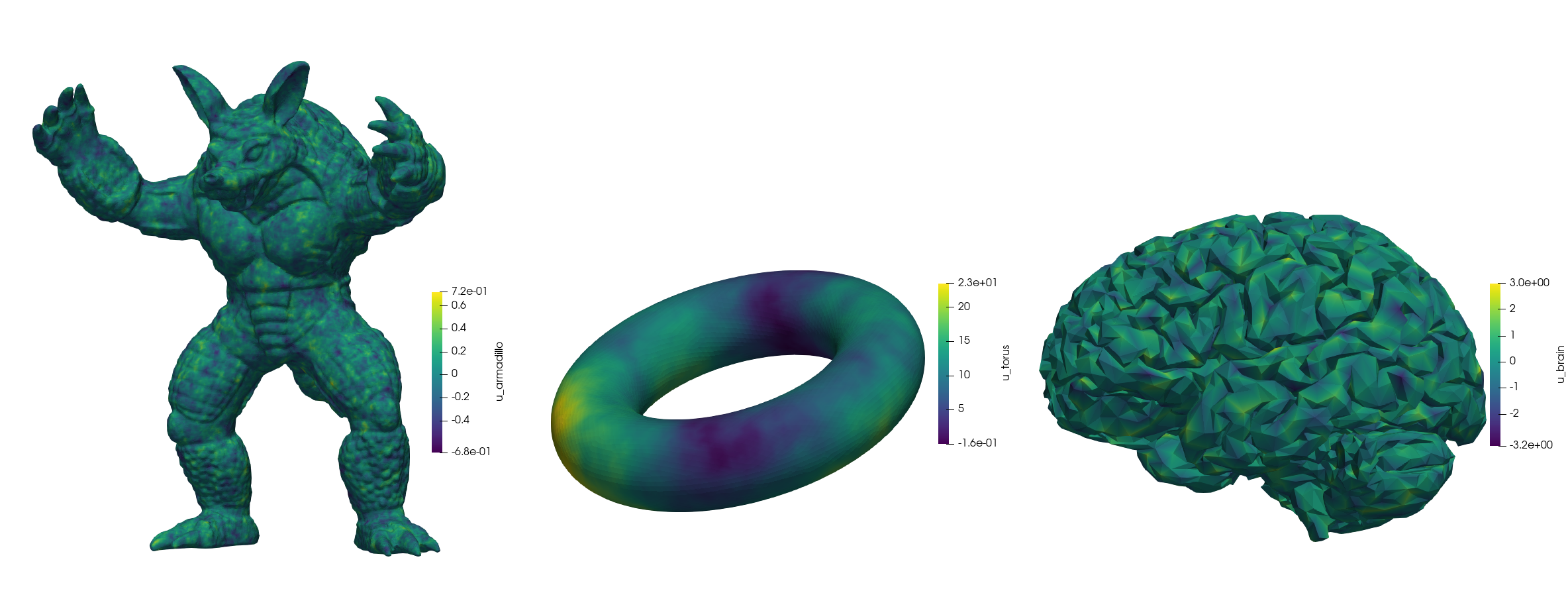}
	\end{center}
\caption{Samples of Gaussian random field approximations on different approximated
surface domains, with $\boldsymbol{H}=I$, $\kappa=1$, and $\beta=1$. The surface
geometries are the Stanford armadillo model (left), sourced from the Stanford 3D
Scanning Repository at \url{https://graphics.stanford.edu/data/3Dscanrep/}; a torus
(middle); and a remeshed brain surface (right), obtained from Sketchfab at
\url{https://sketchfab.com/3d-models/brain-cadd2bde67404c43b2359a6a3281d84a}.}
	\label{surf_exmp}
\end{figure}

\subsection{Metric graph domains}\label{sec:metric_graph_domains}

An important class of domains covered by our framework is given
by metric graphs. A metric graph $\cD=(\cE,\cV)$ consists of a set of edges $\cE$
connected at vertices $\cV$. Each edge $e\in\cE$ is not merely an abstract connection
between two vertices, but a curve parametrized by arc length. The graph is endowed with
the induced shortest-path metric, so that distances can be measured between arbitrary
locations $s_1,s_2\in\cD$, whether they are vertices or points in the interior of
edges; see, e.g., \cite{berkolaiko2013introduction} for a background of metric graphs.

There have been significant recent developments on Gaussian random fields and
fractional SPDEs on metric graphs. In particular, Whittle--Mat\'ern fields on compact
metric graphs were introduced in \cite{bolin2024gaussian}, where existence, sample path
regularity, Karhunen--Lo\`eve expansions, and invariance under insertion or removal of
degree-two vertices are established. Regularity and finite element approximation of
fractional elliptic equations on compact metric graphs are studied in
\cite{bolin2024regularity}, while Markov properties of Gaussian fields defined
through fractional-order differential equations on compact metric graphs are analyzed
in \cite{bolin2026markov}. Related covariance-based approaches on graphs and their
edges are considered in \cite{anderes2020isotropic}.
Gaussian random fields on metric graph provide a natural way to model data observed
on complex networks, such as street or river networks. Statistical inference for
Gaussian Whittle--Mat\'ern fields on metric graphs is developed in
\cite{bolin2026statistical}, including likelihood-based inference, prediction, and
asymptotic properties of maximum likelihood estimators and linear predictors. For an
application to point patterns on road networks, see \cite{bolin2025log}, where such
fields are used to identify high-risk areas on road networks.

For a metric graph $\cD=(\cE,\cV)$, we identify each edge $e\in\cE$ with an interval
$[0,\ell_e]$ through an arc-length parametrization. We fix an arbitrary orientation on
each edge and denote its initial and terminal vertices by $e_-$ and $e_+$, respectively.
Thus, a function $f:\cD\to\bbR$ can be written edgewise as
$f=(f_e)_{e\in\cE}$, where $f_e$ denotes the restriction of $f$ to the edge $e$.
We define $C_c^1(\cD):=\bigoplus_{e\in\cE}C_c^1(e)$, where $C_c^1(e)$ denotes the
space of continuously differentiable functions compactly supported in the interior of
the edge $e$. A function $f=(f_e)_{e\in\cE}\in L^1(\cD)$ is said to have weak derivative
$g=(g_e)_{e\in\cE}\in L^1(\cD)$ if
$$
\sum_{e\in\cE}\int_e f_e\varphi_e'\,d\lambda_e
=
-\sum_{e\in\cE}\int_e g_e\varphi_e\,d\lambda_e
$$
for every $\varphi=(\varphi_e)_{e\in\cE}\in C_c^1(\cD)$. In this case, we write
$\nabla_{\cD}f:=f':=(f_e')_{e\in\cE}:=g$.

For $k\in\mathbb{N}$, the decoupled Sobolev space on the metric graph is defined by
$$
\widetilde H^k(\cD)
:=
\left\{
f=(f_e)_{e\in\cE}\in L^2(\cD):
f_e\in H^k(e)\ \text{for every } e\in\cE
\right\},
$$
equipped with the natural norm
$\|f\|_{\widetilde H^k(\cD)}^2:=\sum_{e\in\cE}\|f_e\|_{H^k(e)}^2$. Thus,
$\widetilde H^k(\cD)$ is decoupled across edges and does not, by itself, impose
continuity or vertex conditions.

For a vertex $v\in\cV$, let
$
\cI(v):=\{(e,\sigma)\in\cE\times\{-,+\}: e_\sigma=v\}.
$
If $e$ is a loop based at $v$, then both $(e,-)$ and $(e,+)$ belong to $\cI(v)$. For
$u\in\widetilde H^2(\cD)$, we define the outward derivative of $u$ at the endpoint
$(e,\sigma)\in\cI(v)$ by
$$
\partial_{\nu_{e,\sigma}}u(v):=
\begin{cases}
-u_e'(0), & \sigma=-,\\
\phantom{-}u_e'(\ell_e), & \sigma=+.
\end{cases}
$$

In the metric-graph setting, we have that
$\dot H_L^1(\cD):=H^{1}(\cD):=\widetilde H^1(\cD)\cap C(\cD)$, where $C(\cD)$ denotes the space of
functions that are continuous on each edge and whose edge traces agree at every vertex.
The operator domain is
$$
\mathscr{D}(L)
=
\left\{
u\in\widetilde H^2(\cD)\cap C(\cD):
\sum_{(e,\sigma)\in\cI(v)}
\partial_{\nu_{e,\sigma}}u(v)=0
\ \text{for every } v\in\cV
\right\}.
$$
The vertex conditions above are called Kirchhoff, or Kirchhoff--Neumann, conditions.
With this notation, the bilinear form associated with $L$ is
$$
a_L(u,v)
=
\sum_{e\in\cE}(H_e u_e',v_e')_{L^2(e)}
+
(\kappa u,\kappa v)_{L^2(\cD)},
\qquad u,v\in\dot H_L^1(\cD),
$$
where $H_e$ denotes the restriction of the coefficient $\boldsymbol H$ to the edge
$e$. The fractional Sobolev spaces induced by $L$ can also be characterized in terms of
interpolation spaces on metric graphs. These identifications are important for
regularity theory and for the analysis of fractional elliptic equations on graphs; we
refer to \cite{bolin2024regularity} for the precise statements.
Turning to the precision-based discretization, let $V_h\subset\dot H_L^1(\cD)$ be the
conforming piecewise linear finite element space on the metric graph. The mesh is
defined edgewise, and derivatives are taken with respect to the arc-length coordinate
on each edge. The lumped-mass quadrature rule introduced in \eqref{quadrature} applies
edge by edge and defines an admissible inner product on $V_h$ in the sense of
Definition~\ref{def_admissible_inner_product}. Using the same quadrature rule in the
bilinear form associated with $L$, we define
$$
a_h(u_h,v_h)
:=
\langle H u_h',v_h'\rangle_h
+
\langle \kappa u_h,\kappa v_h\rangle_h,
\qquad u_h,v_h\in V_h.
$$
Equivalently, this means that the integrals defining $a_L$ are approximated by the
edgewise lumped quadrature rule. Since the estimates reduce on each edge to the
standard one-dimensional finite element quadrature estimates, $a_h$ is an admissible bilinear form in the sense of
Definition~\ref{def_admissible_bilinear_form} for each $h\in(0,1)$.

In this setting, the multiplier space $\cR^k$ in Assumption~\ref{assump2} can be taken
as $W^{k,\infty}(\cD)$ for $k\leq2$. More precisely,
$W^{k,\infty}(\cD)$ is understood edgewise, with the corresponding continuity
requirements at the vertices whenever needed. The verification of the interpolation and
stability estimates in Assumption~\ref{assump2} is obtained by reducing the estimates
to the Euclidean one-dimensional case on each edge. Since the graph has finitely many
edges, the edgewise constants can be combined to obtain constants uniform over $\cD$.


For compact metric graphs, \cite[Theorem~6.9]{bolin2024regularity} establishes the Galerkin covariance convergence rate $\eta<\min\{4\beta-\frac{1}{2},2\}$. Since these graphs have spectral dimension $d=1$, this agrees with the rate $\min\{4\beta-d/2,2\}$ in Theorem~\ref{main_result_1}, which shows that the precision-based discretization preserves this rate. We thus obtain the following consequence.

\begin{corollary}\label{cor3}
Under the assumptions of Theorem~\ref{main_result_1}, for every $\eta<\min\{4\beta-\frac{1}{2},2\}$ and all sufficiently small $h$, $$\|\varrho^\beta-\varrho_h^\beta\|_{L^2(\cD\times\cD)}\lesssim h^\eta.$$In particular, the precision-based discretization preserves the optimal Galerkin covariance convergence rate. In addition, we note that similar conclusions hold for the rational approximation and non-stationary variance-control settings of Theorems~\ref{rat_approx_theorem} and~\ref{prop_fin}, respectively.
\end{corollary}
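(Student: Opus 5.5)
The corollary follows by combining Theorem~\ref{main_result_1}, Theorem~\ref{rat_approx_theorem} and Theorem~\ref{prop_fin} with the known Galerkin estimate on compact metric graphs. No new analysis should be needed; the work lies in checking the abstract hypotheses.

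First, we verify the standing assumptions of Section~\ref{sec:fem} on a compact metric graph.

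\emph{Weyl law.} Assumption~\ref{ass_operator} holds with $\alpha=2$. Indeed, $L$ with Kirchhoff vertex conditions is self-adjoint, strictly positive because $\kappa\ge\kappa_->0$, and has compact resolvent. Its eigenvalues satisfy $\lambda_j\asymp j^2$, which is the Weyl law on compact metric graphs (see \cite{bolin2024gaussian,berkolaiko2013introduction}). Hence $1/\alpha=1/2$ and $\alpha>1/2$, so the hypotheses of Theorem~\ref{main_result_1} reduce to $4\beta>1/2$.

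\emph{Trial spaces.} Assumption~\ref{assump2} holds with $k\le 2$ and $\cR^k=W^{k,\infty}(\cD)$. The interpolation, stability and inverse estimates reduce edge by edge to one-dimensional estimates on quasi-uniform meshes, and the constants combine because there are finitely many edges.

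\emph{Admissible forms.} The edgewise lumped inner product and the form $a_h$ are admissible in the sense of Definitions~\ref{def_admissible_inner_product} and~\ref{def_admissible_bilinear_form}. On each edge this is the one-dimensional case of Lemma~\ref{quad_err_est}, and the local $h_T^2$ bounds are summed over elements using Cauchy--Schwarz. One point deserves care. The lemma uses $W^{2,\infty}$ norms of the coefficients on each element, whereas Assumption~\ref{ass_coeff} only gives $W^{1,\infty}$. I would either assume piecewise $W^{2,\infty}$ coefficients, or treat the coefficient part separately: since $\nabla\phi_h$ is piecewise constant, the error is the nodal quadrature of a $W^{1,\infty}$ function times constants, which still gives $O(h^2)$ on elements. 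I expect this verification to be the main obstacle.

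Next, \cite[Theorem~6.9]{bolin2024regularity} gives, for every $\eta<\min\{4\beta-\tfrac12,2\}$,
\[
\|\varrho^\beta-\widehat\varrho_h^\beta\|_{L^2(\cD\times\cD)}\lesssim h^\eta .
\]
Fix such an $\eta$ and insert this bound into \eqref{est_main_res_1}, using the same $\eta$ in the $h^\eta$ term there. The maximum is then $\lesssim h^\eta$, which is the claimed estimate.

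Finally, the additional claims follow in the same way.
\begin{itemize}
\item For the rational approximation, insert the same bound into \eqref{err_rat_cov}. Choosing $m(h)\gtrsim|\log h|^2$ as discussed after Theorem~\ref{rat_approx_theorem} makes the exponential term $O(h^\eta)$.
\item For the variance-control factor, use \eqref{fin_res_coverr} with $\tau^{-1}\in W^{2,\infty}(\cD)$, so $k=2$ and $h^k\le h^\eta$.
\item Theorem~\ref{prop_fin} also requires $2\beta>\tfrac{1}{2\alpha}+\tfrac12=\tfrac34$, so that case holds under this extra restriction on $\beta$.
\end{itemize}
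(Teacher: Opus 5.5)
Your argument is correct and is the paper's own: on a compact metric graph $\alpha=2/d=2$, so inserting the Galerkin rate $\min\{4\beta-\tfrac12,2\}$ of \cite[Theorem~6.9]{bolin2024regularity} into \eqref{est_main_res_1} gives the claim. The same insertion into \eqref{err_rat_cov} and \eqref{fin_res_coverr} gives the other two cases, using $m(h)\gtrsim|\log h|^2$, $k=2$, and the extra restriction $2\beta>\tfrac34$ that you correctly flag.

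Two small corrections:
\begin{itemize}
\item With $\alpha=2$ the hypothesis $2\beta>1/\alpha$ reads $\beta>\tfrac14$, i.e.\ $4\beta>1$, not $4\beta>\tfrac12$.
\item Your fallback for merely $W^{1,\infty}$ coefficients fails. On an element $T$ you have $E_T(H)=O(h_T^2)$, but $|\phi_h'\psi_h'|=h_T^{-1}\|\phi_h'\|_{L^2(T)}\|\psi_h'\|_{L^2(T)}$, so summing only yields $h\|\phi_h\|_1\|\psi_h\|_1$. A single kink of $H$ inside one element already breaks the $h^2$ bound. You should therefore take your first option of (piecewise) $W^{2,\infty}$ coefficients, which is what Lemma~\ref{quad_err_est} uses.
\end{itemize}
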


Figure~\ref{graph_exmp_new_cairo} shows a sample of a Gaussian random field with
precision-based covariance on a metric graph constructed from the road network of New
Cairo, Egypt, for $\beta=0.44$. The graph was built from OpenStreetMap data
\cite{OpenStreetMap}, extracted using the browser-based tool \textsf{Overpass turbo}
\cite{overpassTurbo}. We retained the largest connected component of the subnetwork
formed by primary, secondary, and tertiary highways.
The metric graph construction, mesh generation, finite element matrices, and data
manipulation were carried out using the \textsf{MetricGraph} package
\cite{MetricGraphPackage}. The rational approximation structures and sampling of the
field were computed using \textsf{rSPDE}.

\begin{figure}[t]
	\begin{center}
		\includegraphics[width=1\linewidth]{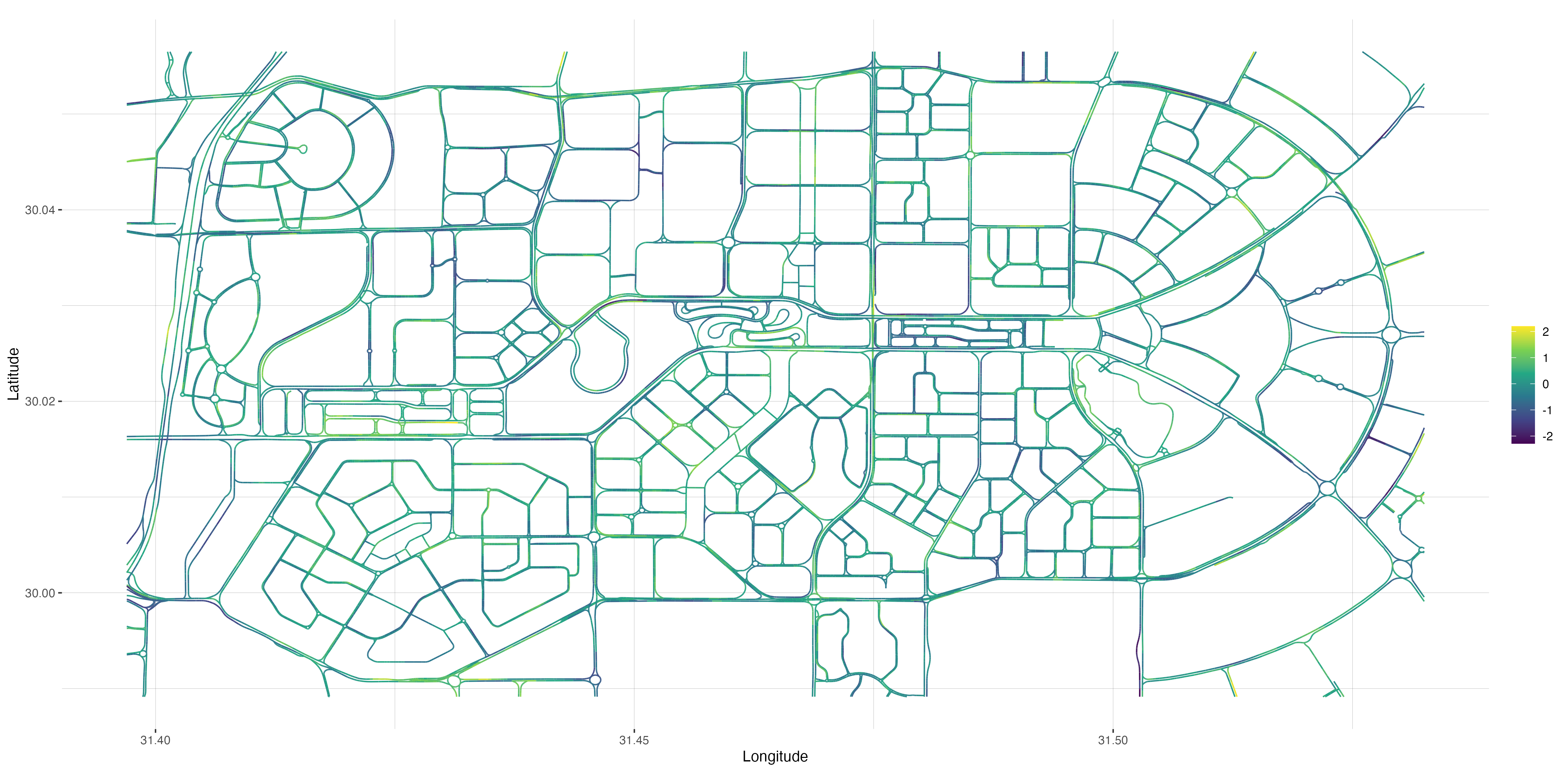}
	\end{center}
\caption{Sample of a Gaussian random field with precision-based covariance on the
largest connected component of the primary, secondary, and tertiary highway network of
New Cairo, Egypt. The parameters are $\boldsymbol{H}=I$, $\kappa=1$, and $\beta=0.44$.}
	\label{graph_exmp_new_cairo}
\end{figure}

\section{Collected proofs}\label{sec_proof_results}

In this section, we prove the results stated in Section~\ref{sec:fem}. We begin with a
spectral comparison result that will be used repeatedly in the sequel.

\begin{proposition}\label{eigenval}
Let $(\lambda_i)_{i\in\mathbb{N}}$ be the eigenvalues of $L$,
enumerated in non-decreasing order and counted with multiplicities. For each
$h\in(0,1)$, let $(\widehat\lambda_{i,h})_{i=1}^{N_h}$ and
$(\lambda_{i,h})_{i=1}^{N_h}$ denote the eigenvalues of the Galerkin operator
$\widehat L_h$ defined in \eqref{galerkinLdiscretization} and of the precision-based
operator $L_h$ defined in \eqref{a_lump}, respectively, again enumerated in non-decreasing order and counted with
multiplicities. Then:
\begin{enumerate}[label=\arabic*)]
    \item\label{eigenvalues_equiv}
    $\widehat{\lambda}_{i,h}\lesssim \lambda_{i,h}\lesssim \widehat{\lambda}_{i,h}$,
    for $1\leq i\leq N_h$;

    \item\label{eigenlower}
    $\lambda_i\leq \widehat{\lambda}_{i,h}$,
    for $1\leq i\leq N_h$;

    \item\label{upperbound_nh}
    $\max\{\widehat{\lambda}_{i,h},\lambda_{i,h}\}\lesssim h^{-2}$,
    for $1\leq i\leq N_h$.
\end{enumerate}
\end{proposition}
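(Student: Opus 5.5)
All three claims will follow from the Courant--Fischer min--max characterization of eigenvalues of symmetric operators on $V_h$. The key observation is that $\widehat L_h$ and $L_h$ are self-adjoint with respect to $(\cdot,\cdot)$ and $\langle\cdot,\cdot\rangle_h$, respectively. Hence their eigenvalues are the min--max values of the Rayleigh quotients
\[
\widehat R(v)=\frac{a_L(v,v)}{\|v\|^2}
\quad\text{and}\quad
R(v)=\frac{a_h(v,v)}{\|v\|_h^2},
\qquad v\in V_h\setminus\{0\}.
\]
Both quotients are taken over the same family of $i$-dimensional subspaces of $V_h$. Likewise, $\lambda_i$ is the min--max value of $a_L(v,v)/\|v\|^2$ over $i$-dimensional subspaces of $\dot H^1_L(\cD)$.

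\textbf{Claim 2).} Since $V_h\subseteq \dot H^1_L(\cD)$, the discrete minimum is taken over a smaller class of subspaces than the continuous one. Therefore $\lambda_i\le\widehat\lambda_{i,h}$ immediately.

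\textbf{Claim 1).} I would prove that $R$ and $\widehat R$ are pointwise comparable on $V_h$, with $h$-independent constants. For the denominators, item 1 of Definition~\ref{def_admissible_inner_product} gives $\|v\|_h\simeq\|v\|$. For the numerators, \eqref{cond_ord2_bilin_form} gives $|a_h(v,v)-a_L(v,v)|\le Ch^2\|v\|_1^2=Ch^2a_L(v,v)$. Hence, once $h$ is small enough that $Ch^2\le 1/2$,
\[
\tfrac12\,a_L(v,v)\le a_h(v,v)\le \tfrac32\,a_L(v,v).
\]
This is the same computation as in Remark~\ref{remark_ord2_bilin_form}. It follows that $c\,\widehat R(v)\le R(v)\le C\,\widehat R(v)$ for all $v\in V_h\setminus\{0\}$. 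Pointwise comparability of the quotients on the same admissible family of subspaces transfers directly through the min--max formula, which gives $\widehat\lambda_{i,h}\simeq\lambda_{i,h}$.

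The only delicate point is the range of $h$. For $h$ not small, $(1-Ch^2)$ may fail to be positive. There I would either restrict to sufficiently small $h$, which is all that the main theorems use, or note that $a_h$ is an inner product on the finite-dimensional space $V_h$. The latter does not give uniformity, so I would state the result for $h\le h_0$ and absorb the finitely relevant range into constants. I expect this bookkeeping to be the main, though minor, obstacle.

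\textbf{Claim 3).} I would bound the largest eigenvalue, which is the maximum of each Rayleigh quotient. Using the inverse inequality of Assumption~\ref{assump2}(iv),
\[
\widehat R(v)=\frac{\|v\|_1^2}{\|v\|^2}\lesssim h^{-2},
\qquad v\in V_h\setminus\{0\},
\]
so $\widehat\lambda_{i,h}\le\widehat\lambda_{N_h,h}\lesssim h^{-2}$. The same bound for $\lambda_{i,h}$ then follows from Claim 1.
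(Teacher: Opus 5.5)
Your proposal is correct and follows the paper's proof step for step. The paper also uses Courant--Fischer, comparing the two Rayleigh quotients on $V_h$ via norm equivalence and Remark~\ref{remark_ord2_bilin_form} for 1), the inclusion $V_h\subseteq\dot H^1_L(\cD)$ for 2), and the inverse inequality of Assumption~\ref{assump2}(iv) combined with 1) for 3); your explicit restriction to $h\le h_0$ with $Ch_0^2\le 1/2$ is, if anything, more careful than the paper, whose Remark~\ref{remark_ord2_bilin_form} asserts positivity of $a_h$ for every $h\in(0,1)$ without tracking the constant.
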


\begin{proof}
	The results are straightforward consequences of the Courant min-max principle. Indeed, let $R(u):=\nicefrac{a_{L}(u,u)}{\|u\|_{L^2(\cD)}^2}$, $\widehat{R}_{h}(\varphi):=\nicefrac{a_{L}|_{V_{h}\times V_{h}}(\varphi,\varphi)}{\|\varphi\|_{L^2(\cD)}^2}$ and $R_{h}(\phi):=\nicefrac{a_{h}(\phi,\phi)}{\|\phi\|_{h}^2}$ the Rayleigh quotients of $a_{L}$, $a_{L}|_{V_{h}\times V_{h}}$ and $a_{h}$ over $L^2(\cD)$, $V_{h}$ and $V_{h}$, respectively. Then, it follows from Remark \ref{remark_ord2_bilin_form} that
	\begin{equation}\label{rayleigh_quotient_equiv}
		\widehat{R}_{h}(u) \lesssim R_{h}(u) \lesssim \widehat{R}_{h}(u),\quad (u \in V_h).
	\end{equation}
	Therefore, the Courant min-max principle combined with \eqref{rayleigh_quotient_equiv} implies \ref{eigenvalues_equiv}. Moreover, any subspace of dimension $k$ of $V_h$ is also a subspace of dimension $k$ of $\dot{H}^{1}_{L}(\cD)$. Consequently, from the min-max principle, we have that
	\begin{equation*}
		\lambda_{k}=\min _{V_k \subset H^1_{L}(\cD)} \max _{u \in V_{k}} R(u)\le \min _{V_k \subset V_{h}} \max _{u \in V_{k}} \widehat{R}_{h}(u)=\widehat{\lambda}_{h,k}
	\end{equation*}
	and 2) follows. If $\phi_{k}\in V_{h}$ is the eigenfunction of $\widehat{L}_h$ associated with the eigenvalue $\widehat{\lambda}_{k,h}$ such that $\|\phi_{k}\|_{1}=1$, then $\widehat{\lambda}_{k,h}=a_{L}(\phi_{k},\phi_{k})=\|\phi_{k}\|^{2}_{1}\lesssim h^{-2}$, where the last inequality holds from item (4) in Assumption \ref{assump2}. Now \ref{upperbound_nh} follows by combining the previous inequality with the result of \ref{eigenvalues_equiv}.
\end{proof}

The proof of Theorem~\ref{main_result_1} relies on Proposition~\ref{eigenval} and a fundamental integral representation of fractional powers of self-adjoint bounded operators with spectrum bounded away from zero. The lines of our proof follows similar ideas as those presented in \cite[Theorem~4.2]{almeida-sousa2026finite}, with the main difference being the intermediation of the Hilbert-Schmidt norm, by the Schatten norm, which for a bounded linear operator $\Theta:E\to F$, defined over hilbert spaces $E$ and $F$, is defined for $0\le p < \infty$ as the norm 
$\|\Theta\|_{L_p(E,F)}:=\left(\sum_{i=1}^{\infty} s(\Theta)_i^p\right)^{1/p}$, where $(s(\Theta)_i)_{i\in\mathbb{N}}$ are the singular values of $|\Theta|:=\sqrt{\Theta^*\Theta}$ counted with multiplicities. 

To keep the notation brief, we from now on write $\norm{\cdot}{\LD}$ for both $\norm{\cdot}{L^{2}(\cD)}$ and $\norm{\cdot}{L^{2}(\cD\times\cD)}$, $\norm{\cdot}{\HS}$ for $\norm{\cdot}{L_2(L^2(\cD))}$, and $\norm{\cdot}{\LL}$ for $\norm{\cdot}{L(L^2(\cD))}$.

\begin{proof}[Proof of Theorem~\ref{main_result_1}] Let $\widetilde{\varrho}_h^\beta$ denote the kernel associated with 
    $\widetilde L_h^{-2\beta}\Pi_h$. By the triangle inequality,
    \begin{equation}\label{rt_app_s2}
        \begin{split}
        \left\|\varrho^\beta-\varrho_h^\beta\right\|_{\LD}
        &\lesssim
        \left\|\varrho^\beta-\widehat{\varrho}_h^\beta\right\|_{\LD}
        +
        \left\|\widehat{\varrho}_h^\beta-\widetilde{\varrho}_h^\beta\right\|_{\LD}
        +
        \left\|\widetilde{\varrho}_h^\beta-\varrho_h^\beta\right\|_{\LD}.
        \end{split}
\end{equation}
It remains to estimate the last two terms on the right-hand side of
\eqref{rt_app_s2}. We focus our attention on estimating
\begin{equation*}
\cE_h
:=
\left\|\widetilde{\varrho}_h^\beta-\varrho_h^\beta\right\|_{\LD}
=
\left\|\widetilde L_h^{-2\beta}\Pi_h-L_h^{-2\beta}\Lambda_h
\right\|_{L_{2}(L^{2}(\cD), V_{h})}
\end{equation*}
and note that the estimate for 
$\left\|\widehat{\varrho}_h^\beta-\widetilde{\varrho}_h^\beta\right\|_{\LD}$ follows analogously and in a simpler way since the projections are the same. We split the proof into the cases $\alpha>\frac{1}{2}$ and $\alpha\le\frac{1}{2}$.

\textit{Case $\alpha>\frac{1}{2}$:}  From \cite[Theorem~4.3]{almeida-sousa2026finite}, the identity
\begin{equation}\label{knjnroaa_fp}
    \begin{split}
    A^{-\gamma}
    &=
    \frac{\sin(\pi\gamma)}{\pi}
    \int_r^R t^{-\gamma}(t+A)^{-1}\,dt +
    \frac{R^{1-\gamma}}{2\pi}
    \int_{-\pi}^{\pi}
    e^{i(1-\gamma)\theta}(Re^{i\theta}-A)^{-1}\,d\theta\\
	&\quad-
    \frac{r^{1-\gamma}}{2\pi}
    \int_{-\pi}^{\pi}
    e^{i(1-\gamma)\theta}(re^{i\theta}-A)^{-1}\,d\theta
    \end{split}
\end{equation}
is valid for $A=\widetilde L_h$ and $A=L_h$, provided that
$r<\min\{\widetilde\lambda_{1,h}/2,\lambda_{1,h}/2\}$ and
$R>\max\{\widetilde\lambda_{N_h,h},\lambda_{N_h,h}\}$. In particular, by
Proposition~\ref{eigenval}, we may choose $R=(C+1)h^{-2}$, with $C$ independent of
$h$, so that
\begin{equation}\label{spec_relations}
    r<\min\{\widetilde\lambda_{1,h}/2,\lambda_{1,h}/2\}
\leq
\max\{\widetilde\lambda_{N_h,h},\lambda_{N_h,h}\}
< Ch^{-2}<R.
\end{equation}
Here $(\widetilde\lambda_{i,h})_{i=1}^{N_h}$ and
$(\lambda_{i,h})_{i=1}^{N_h}$ denote the eigenvalues of $\widetilde L_h$ and $L_h$,
respectively, enumerated in non-decreasing order, and
$N_h=\operatorname{dim}(V_h)$. Since $\|S\|_{L_2(V_{h})} \leq \sqrt{N_h}\|S\|_{L(V_{h})}$, the operators in the integrands are strongly measurable and bounded over the intervals under consideration. Consequently, \eqref{knjnroaa_fp} holds inside the space of $L_{2}(V_{h})$-valued Bochner integrable functions. From these observations, we get that
\begin{equation}\label{principal_relation_1}
    \begin{split}
    \cE_{h} &\lesssim \int_r^R t^{-2\beta}\left\|I_1(t)\right\|_{L_{2}(L^{2}(\cD), V_{h})} \,d t  +R^{1-2\beta} \int_{-\pi}^\pi\left\|I\left(R e^{i \theta}\right)\right\|_{L_{2}(L^{2}(\cD), V_{h})} \,d \theta \\
    &\quad +r^{1-2\beta} \int_{-\pi}^\pi\left\|I\left(r e^{i \theta}\right)\right\|_{L_{2}(L^{2}(\cD), V_{h})} \,d \theta
\end{split}
\end{equation}
where we set $I_{1}(t):=(t+\widetilde{L}_{h})^{-1} \Pi_h-(t+L_{h})^{-1} \Lambda_h$ for $t>0$, and $I(z):=(z-\widetilde{L}_{h})^{-1} \Pi_h-(z-L_{h})^{-1} \Lambda_h$ for $z\in \mathbb{C}\setminus \bigl(\sigma(\widetilde L_h)\cup \sigma(L_h)\bigr)$. Using $\widetilde{L}_{h}^{-1}\Pi_{h}=L_{h}^{-1}\Lambda_{h}$ and that an operator always commutes with its resolvent under the Borel functional calculus, we get that
\begin{equation*}
    \begin{split}
    I_1(t)&=-t (t+\widetilde{L}_{h})^{-1} \widetilde{L}_{h}\left(\widetilde{L}_{h}^{-1}-L_{h}^{-1}\right) L_{h}(t+L_h)^{-1}L^{-1}_{h}\Lambda_h \\
    & = -t (t+\widetilde{L}_{h})^{-1}\widetilde{L}_{h}^{1/2} G_{h} L_{h}^{1/2}(t+L_h)^{-1} \Lambda_h
    \end{split}
\end{equation*}
where we set $G_{h}=\widetilde{L}_{h}^{1/2}\left(\widetilde{L}_{h}^{-1}-L_{h}^{-1}\right) L_{h}^{-1/2}$. Now, observing that $1/2=1/4+0+1/4+0$, we can use the Schatten--Hölder inequality; see, for instance, \cite[Theorem~2.7 and Theorem~2.8]{Simon_2010}, to get that 
\begin{equation}\label{schatten_holder_ineq}
    \begin{split}
    \left\|I_1(t)\right\|_{L_2\left(L^{2}(\cD), V_h\right)}  &\leq t\left\|\widetilde{L}_{h}^{1/2}(t+\widetilde{L}_{h})^{-1}\right\|_{L_{4}} \left\|G_{h}\right\|_{L}\times \\
    &\quad\times  \left\|L_{h}^{1/2}(t+L_h)^{-1}\right\|_{L_{4}} \left\|\Lambda_h\right\|_{L\left(L^{2}(\cD), V_h\right)}.
    \end{split}
\end{equation}
Reasoning as in \cite[Theorem~4.2]{almeida-sousa2026finite}, it follows that $\|G_{h}\|_{L}\lesssim h^{2}$ and $\|\Lambda_{h}\|_{L(L^{2}(\cD), V_h)}\lesssim 1$. In order to estimate the $L_{4}$-norms in $V_{h}$, we note that for $Q\in\{\widetilde{L}_h, L_h\}$,
\begin{equation}\label{L4_norm_estimate}
    \| Q^{1/2}(t+Q)^{-1} \|_{L_4}^4=\sum_{j=1}^{N_h} \frac{\lambda_{Q, j}^2}{\left(t+\lambda_{Q, j}\right)^4} \lesssim t^{1 / \alpha-2}
\end{equation}
for $t\in [r,R]$ and $\alpha>1/2$. In fact, if we let $J_{t}:=\floor{K\max\{1,t^{\frac{1}{\alpha}}\}}$, where $K>0$ is a large enough constant such that $\lambda_{Q,j}\geq j^{\alpha}>t$, for every $j>J_{t}$; note that this last part depends only on the constants in \eqref{weyl_law}. We get from Proposition~\ref{eigenval} and the fact that $x^{2}/(t+x)^{4}\le (1/16) t^{-2}$, for every $t,x>0$, that
\begin{equation*}
    \begin{split}
        \sum_{j=1}^{N_h} \frac{\lambda_{Q, j}^2}{\left(t+\lambda_{Q, j}\right)^4}
         \lesssim \sum_{j\le J_{t}} t^{-2} + \sum_{j>J_{t}} \lambda_{Q, j}^{-2} \lesssim J_{t}t^{-2} + \sum_{j>J_{t}} j^{-2\alpha} \lesssim J_{t}t^{-2} + J_{t}^{1-2\alpha}
    \end{split}
\end{equation*}
where in the last step we used that $\alpha>1/2$ and \eqref{weyl_law}. Now, we note that if $t\geq 1$, then we obviously have that $J_{t}t^{-2} + J_{t}^{1-2\alpha}\lesssim t^{\frac{1}{\alpha}-2}$. Using again that $1/2-\alpha<0$, the same conclusion holds for $t\in [r,1]$. In short, we have shown that \eqref{L4_norm_estimate} is true, and due to the other mentioned estimates, for the terms in the right-hand side of \eqref{schatten_holder_ineq}, we have that $\|I_{1}(t)\|_{L_2}\lesssim h^{2}t^{\frac{1}{2\alpha}}$. Reasoning analogously, it is enough to estimate the corresponding resolvent factors in order to estimate the other integrands in the right-hand side of \eqref{principal_relation_1}. We use the explicit choice of $R$ in one of these cases. Indeed, following the same notation for $Q$, we use the relations \eqref{spec_relations} to get the inequality $|Re^{i\theta}-\lambda_{Q, j}|\geq |R-\lambda_{Q, j}|\geq Ch^{-2}=\frac{C}{C+1}R$, and conclude that
\begin{equation*}
    \|Q^{1/2}(Re^{i\theta}-Q)^{-1}\|_{L_4}^4=\sum_{j=1}^{N_h} \frac{\lambda_{Q, j}^2}{|Re^{i\theta}-\lambda_{Q, j}|^4} \le R^{-4}N_{h} (Ch^{-2})^{2} \simeq R^{\frac{1}{\alpha}-2}.
\end{equation*}
Since $r$ is fixed, it is easy to conclude that $\|Q^{1/2}(re^{i\theta}-Q)^{-1}\|_{L_4}^4\lesssim 1$. In short, we conclude that $\|I(Re^{i\theta})\|_{L_2}\lesssim h^{2}R^{\frac{1}{2\alpha}}$ and $\|I(re^{i\theta})\|_{L_2}\lesssim h^{2}$, and we have found an estimate for all integrands in \eqref{principal_relation_1}. Consequently, we have that 
\begin{equation*}
    \begin{split}
        \cE_{h} &\lesssim \int_{r}^{R} t^{-2\beta} h^{2} t^{\frac{1}{2\alpha}} \,dt + R^{1-2\beta}\int_{-\pi}^{\pi} h^{2} R^{\frac{1}{2\alpha}} \,d\theta + r^{1-2\beta}\int_{-\pi}^{\pi} h^{2} \,d\theta \\
        &\lesssim h^{2} \int_{r}^{R} t^{-2\beta+\frac{1}{2\alpha}} \,dt + h^{2}R^{1-2\beta+\frac{1}{2\alpha}} + h^{2} \lesssim  h^{\min \{4\beta-\frac{1}{\alpha}, 2\}}\left(\log \frac{1}{h}\right)^{[4\beta-\frac{1}{\alpha}=2]},
    \end{split}
\end{equation*}
where $[\cdot]$ denotes the Iverson bracket, and we have used the explicit choice of $R$ in the last estimate. 

\textit{Case $\alpha\le\frac{1}{2}$:}  Since $\widetilde{L}_{h}^{-1}\Pi_{h}=L_{h}^{-1}\Lambda_{h}$, we have that
$\widetilde L_h^{-2\beta}\Pi_h-L_h^{-2\beta}\Lambda_{h} = (X^{\rho}-Y^{\rho})Y\Lambda_{h} = K_{\rho}\Lambda_{h}$, where $\rho=2\beta-1$, $X=\widetilde{L}_{h}^{-1}$ and $Y=L_{h}^{-1}$. Observe that with this notation and our definition of $G_{h}$, we have that $X-Y=X^{1/2}G_{h}Y^{-1/2}$. Now, assume that $(\lambda_i,\psi_{i})_{i=1}^{N_h}$ and $(\gamma_{j},\xi_{j})_{j=1}^{N_h}$ denote $(\cdot,\cdot)$-orthonormal and $\langle\cdot,\cdot\rangle_{h}$-orthonormal eigenpairs of $X$ and $Y$, respectively. Further, consider the notation $\ell_{i,j}= (\xi_{j},\psi_{i})$ and $g_{i,j}=(G_{h}\xi_{j},\psi_{i})$. Notice that the following relation holds: 
\begin{equation*}
    \left(\lambda_i-\gamma_j\right) \ell_{i,j}=\left((X-Y) \xi_{j}, \psi_i\right)=\left( X^{1/2}G_{h}Y^{-1/2}\xi_{j}, \psi_i\right)=\lambda_i^{\frac{1}{2}}g_{i,j}\gamma_j^{-\frac{1}{2}}.
\end{equation*}
Consequently, we get that
\begin{equation}\label{K_rho_relation}
    (K_{\rho}\xi_{j},\psi_{i})=\lambda_{i}^{\frac{1}{2}}\gamma_{j}^{\frac{1}{2}}\frac{\lambda_{i}^{\rho}-\gamma_{j}^{\rho}}{\lambda_{i}-\gamma_{j}}g_{i,j}.
\end{equation}
Since $0<\alpha\le \frac{1}{2}$ and $2\beta>\frac{1}{\alpha}$, it follows that $\rho>\frac{1}{2\alpha}\geq 1$. Using that $\rho\geq 1$, the scalar mean value theorem implies that 
\begin{equation}\label{mean_value_theorem}
    \left|\frac{x^{\rho}-y^{\rho}}{x-y}\right|^{2}\leq \rho^{2} (x^{\rho-1}+y^{\rho-1})^{2} \le \rho^{2}2^{2}[x^{2(\rho-1)}+y^{2(\rho-1)}], \quad \forall x,y>0.
\end{equation}
Squaring both sides of \eqref{K_rho_relation},using \eqref{mean_value_theorem}, and taking the summation over $1\le i,j\le N_h$, we obtain that
\begin{equation}\label{K_rho_norm_est0}
    \|K_\rho\|_{L_2} \lesssim_\rho \| X^{\rho-1/2} G_{h} Y^{1/2}\|_{L_2}+\| X^{1/2} G_{h} Y^{\rho-1/2} \|_{L_2}.
\end{equation}
In order to estimate the $L_2$-norms of the terms on the right-hand side of the previous estimate, we use the fact that given two real numbers $a,b>0$ such that $a+b>\frac{1}{2\alpha}$, there exist $2\le p,q<\infty$ such that 
\begin{equation*}
    \frac{1}{p}+\frac{1}{q}=\frac{1}{2}, \quad \alpha p a >1, \quad \alpha q b >1.
\end{equation*}
Using the Schatten--Hölder inequality on each term on the right-hand side of \eqref{K_rho_norm_est0}, with the choices $(a,b)=(\rho-\tfrac12,\tfrac12)$ and $(a,b)=(\tfrac12,\rho-\tfrac12)$, and recalling that $\|G_h\|_{L}\lesssim h^{2}$ and $\|X^{a}\|_{p}+\|Y^{b}\|_{q}\lesssim 1$ whenever $\alpha ap>1$ and $\alpha bq>1$,
we get that
\begin{equation}\label{K_rho_norm_estimate}
    \|K_{\rho}\|_{L_2} \lesssim h^{2}.
\end{equation}
Finally, using the fact that $\|\Lambda_{h}\|_{L(L^{2}(\cD), V_h)}\lesssim 1$, and  \eqref{K_rho_norm_estimate}, we conclude that 
$\cE_{h}\lesssim h^{2}$.
\end{proof}

We now prove Theorem~\ref{rat_approx_theorem}, which quantifies the additional error
introduced by the rational approximation when combined with the lumped-mass
discretization.

\begin{proof}[Proof of Theorem~\ref{rat_approx_theorem}]
By the triangle inequality and the definition of the rational-based approximation, we
have
\begin{equation}\label{rt_app_s}
    \begin{split}
    \left\|\varrho^\beta-\varrho_{m,h}^\beta\right\|_{\LDD}
    &\lesssim
    \left\|\varrho^\beta-\varrho_h^\beta\right\|_{\LDD} 
    +
    \left\|
    L_h^{-2\beta}\Lambda_h
    -
    R_{2\beta,m,h}\Lambda_h
    \right\|_{\HS}.
    \end{split}
\end{equation}
The first term on the right-hand side is bounded by Theorem~\ref{main_result_1}. The
second term is controlled by the rational approximation estimate
\eqref{rel_exp_conv}. Therefore, for every
$\eta<\min\{4\beta-\frac{1}{\alpha},2\}$ and all sufficiently small $h$,
\begin{equation*}
    \left\|\varrho^\beta-\varrho_{m,h}^\beta\right\|_{\LDD}
    \lesssim
    \max\left\{
    \left\|\varrho^\beta-\widehat{\varrho}_h^\beta\right\|_{\LDD},
    h^\eta
    \right\}
    +
    \mathbbm{1}_{\{2\beta\notin\mathbb{N}\}}
    h^{-1/\alpha}e^{-2\pi\sqrt{\{2\beta\}m}}.
\end{equation*}
This is precisely \eqref{err_rat_cov}.
\end{proof}

We now prove Theorem~\ref{prop_fin}.

\begin{proof}[Proof of Theorem~\ref{prop_fin}]
We first rewrite the estimate in terms of covariance operators. For notational convenience, set $B_h:=\cI_hM_{\tau^{-1}}|_{V_h}$. Since
$L^\beta(\tau u)=\cW$, the exact covariance operator is
$M_{\tau^{-1}}L^{-2\beta}M_{\tau^{-1}}$. Hence, by the triangle inequality,
\begin{equation*}
    \begin{split}
    \cE_h
    &:=
    \left\|
    M_{\tau^{-1}}L^{-2\beta}M_{\tau^{-1}}
    -
    B_hR_{2\beta,m,h}\Lambda_h B_h^*\Pi_h
    \right\|_{\HS}
    \\
    &\leq
    \left\|
    M_{\tau^{-1}}L^{-2\beta}M_{\tau^{-1}}
    -
    M_{\tau^{-1}}L_h^{-2\beta}\Lambda_hM_{\tau^{-1}}
    \right\|_{\HS}
    \\
    &\quad+
    \left\|
    M_{\tau^{-1}}L_h^{-2\beta}\Lambda_hM_{\tau^{-1}}
    -
    B_hL_h^{-2\beta}\Lambda_h B_h^*\Pi_h
    \right\|_{\HS}
    \\
    &\quad+
    \left\|
    B_h
    \left(L_h^{-2\beta}\Lambda_h-R_{2\beta,m,h}\Lambda_h\right)
    B_h^*\Pi_h
    \right\|_{\HS}
    =:\cI_1+\cI_2+\cI_3 .
    \end{split}
\end{equation*}
We estimate the three terms separately. The term $\cI_3$ is present only when
$2\beta\notin\mathbb{N}$.

Since $\tau$ is bounded away from zero and $\tau^{-1}\in\cR^k$, Assumption~\ref{assump2}
implies that
$\|\cI_h(\tau^{-1}v)\|_{\LD}\lesssim \|v\|_{\LD}$ for every $v\in V_h$.
Thus $B_h$ is uniformly bounded as an operator on $L^2(\cD)$, and so is
$B_h^*\Pi_h$. If $\{e_i\}_{i\in\mathbb{N}}$ is an orthonormal basis of $L^2(\cD)$, 
\begin{equation}\label{est_usef}
    \begin{split}
    \cI_3^2
    &=
    \sum_i
    \left\|
    B_h
    \left(L_h^{-2\beta}\Lambda_h-R_{2\beta,m,h}\Lambda_h\right)
    B_h^*\Pi_h e_i
    \right\|_{\LD}^2
    \\
    &\lesssim
    \sum_i
    \left\|
    \left(L_h^{-2\beta}\Lambda_h-R_{2\beta,m,h}\Lambda_h\right)
    B_h^*\Pi_h e_i
    \right\|_{\LD}^2\\
    &\leq
    \left\|
    L_h^{-2\beta}\Lambda_h-R_{2\beta,m,h}\Lambda_h
    \right\|_{\HS}^2
    \|B_h^*\Pi_h\|_{\LL}^2
    \\
    &\lesssim
    \left\|
    L_h^{-2\beta}\Lambda_h-R_{2\beta,m,h}\Lambda_h
    \right\|_{\HS}^2
    \lesssim
    \mathbbm{1}_{\{2\beta\notin\mathbb{N}\}}\,
    h^{-2/\alpha}e^{-4\pi\sqrt{\{2\beta\}m}},
    \end{split}
\end{equation}
where in the last step we used \eqref{rel_exp_conv}. Hence
$
\cI_3
\lesssim
\mathbbm{1}_{\{2\beta\notin\mathbb{N}\}}\,
h^{-1/\alpha}e^{-2\pi\sqrt{\{2\beta\}m}}.
$
The bound for $\cI_1$ follows from the boundedness of $M_{\tau^{-1}}$ as 
$\|M_{\tau^{-1}}\|_{\LL}=\|\tau^{-1}\|_{\infty}$, and therefore
$$
\cI_1
\leq
\|\tau^{-1}\|_{\infty}^2
\left\|
L^{-2\beta}-L_h^{-2\beta}\Lambda_h
\right\|_{\HS}
\lesssim
\left\|
\varrho^\beta-\varrho_h^\beta
\right\|_{\LDD}.
$$
By Theorem~\ref{main_result_1}, for every
$\eta<\min\{4\beta-\frac{1}{\alpha},2\}$, we have 
$
\cI_1
\lesssim
\max\left\{
\left\|\varrho^\beta-\widehat{\varrho}_h^\beta\right\|_{\LDD},
h^\eta
\right\}.
$

It remains to estimate $\cI_2$. Adding and subtracting
$\cI_hM_{\tau^{-1}}L_h^{-2\beta}\Lambda_hM_{\tau^{-1}}$, we obtain
\begin{equation*}
    \begin{split}
    \cA
    &:=
    M_{\tau^{-1}}L_h^{-2\beta}\Lambda_hM_{\tau^{-1}}
    -
    \cI_hM_{\tau^{-1}}L_h^{-2\beta}\Lambda_h
    \left(\cI_hM_{\tau^{-1}}|_{V_h}\right)^*\Pi_h
    \\
    &=
    (I-\cI_h)M_{\tau^{-1}}L_h^{-2\beta}\Lambda_hM_{\tau^{-1}}
   +
    \cI_hM_{\tau^{-1}}L_h^{-2\beta}\Lambda_h
    \left(
    M_{\tau^{-1}}
    -
    \left(\cI_hM_{\tau^{-1}}|_{V_h}\right)^*\Pi_h
    \right)
    \\
    &=:\cA_1+\cA_2 .
    \end{split}
\end{equation*}
Thus $\cI_2=\|\cA\|_{\HS}\leq
\|\cA_1\|_{\HS}+\|\cA_2\|_{\HS}$.
We first estimate $\cA_1$. Using Assumption~\ref{assump2}, in particular the
interpolation estimate for functions of the form $\tau^{-1}\psi$ with
$\psi\in V_h$, and recalling that $\tau^{-1}\in\cR^k$, we get, for any orthonormal
basis $\{e_i\}_{i\in\mathbb{N}}$ of $L^2(\cD)$,
\begin{equation}\label{est_cA1}
    \begin{split}
    \|\cA_1\|_{\HS}^2
    &=
    \sum_i
    \left\|
    (I-\cI_h)M_{\tau^{-1}}L_h^{-2\beta}\Lambda_hM_{\tau^{-1}}e_i
    \right\|_{\LD}^2
    \\
    &\lesssim
    \sum_i
    \left\|
    (I-\cI_h)M_{\tau^{-1}}L_h^{-2\beta}\Lambda_h e_i
    \right\|_{\LD}^2
    \lesssim
    h^{2k}
    \sum_i
    \left\|
    L_h^{-2\beta}\Lambda_h e_i
    \right\|_1^2\\
    &=
    h^{2k}
    \sum_i
    \left\|
    L_h^{-2\beta+\frac{1}{2}}\Lambda_h e_i
    \right\|_{\LD}^2
    =
    h^{2k}
    \left\|
    L_h^{-2\beta+\frac{1}{2}}\Lambda_h
    \right\|_{\HS}^2
    \lesssim h^{2k}.
    \end{split}
\end{equation}
Here we used the boundedness of $M_{\tau^{-1}}$ in the second line. The last estimate
follows from the Hilbert--Schmidt summability of
$L_h^{-2\beta+\frac{1}{2}}\Lambda_h$, which is ensured by the condition
$2\beta>\frac{1}{2\alpha}+\frac{1}{2}$.
We now estimate $\cA_2$. The adjoint relation
\begin{equation}\label{adj_rel}
    \left[
    (I-\cI_h)M_{\tau^{-1}}L_h^{-2\beta}\Lambda_h
    \right]^*
    =
    L_h^{-2\beta}\Lambda_h
    \left(
    M_{\tau^{-1}}
    -
    \left(\cI_hM_{\tau^{-1}}|_{V_h}\right)^*\Pi_h
    \right)
\end{equation}
shows that the operator appearing on the right of $L_h^{-2\beta}\Lambda_h$ in
$\cA_2$ is controlled by the same interpolation error as in the estimate of $\cA_1$.
Moreover, by the boundedness argument used in \eqref{est_usef}, it is enough to
bound the Hilbert--Schmidt norm of
$
L_h^{-2\beta}\Lambda_h
\left(
M_{\tau^{-1}}
-
\left(\cI_hM_{\tau^{-1}}|_{V_h}\right)^*\Pi_h
\right).
$
By \eqref{adj_rel} and the fact that the Hilbert--Schmidt norm of an operator equals
that of its adjoint, this norm is bounded by the same expression estimated in
\eqref{est_cA1}. Therefore,
$
\|\cA_2\|_{\HS}\lesssim h^k.
$
Combining the bounds for $\cA_1$ and $\cA_2$ gives
$
\cI_2\lesssim h^k.
$

Putting together the estimates for $\cI_1$, $\cI_2$, and $\cI_3$, we conclude that
\begin{equation*}
    \begin{split}
    \left\|
    \varrho^{\beta,\tau}-\varrho_{m,h}^{\beta,\tau}
    \right\|_{\LDD}
    &\lesssim
    \max\left\{
    \left\|\varrho^\beta-\widehat{\varrho}_h^\beta\right\|_{\LDD},
    h^\eta
    \right\}
    +h^k
    +
    \mathbbm{1}_{\{2\beta\notin\mathbb{N}\}}\,
    h^{-1/\alpha}e^{-2\pi\sqrt{\{2\beta\}m}}.
    \end{split}
\end{equation*}
This is precisely \eqref{fin_res_coverr}.
\end{proof}

\section{Numerical experiments}\label{sec:experiments}

\subsection{Fractional models in one dimension}

We consider a Gaussian generalized Whittle--Mat\'ern random field on
$\cD:=(0,1)$ specified through the SPDE
\begin{align}\label{e:numexp-a}
    (\kappa^2 - \Delta)^{\beta} u(s) &= \white(s), && s \in \cD,\\
    u(s) &= 0, && s\in\partial\cD,
\end{align}
where $\beta>1/4$ and $\kappa(s)^2=(1+s^2)/4$. The equation is discretized on a
uniform mesh using continuous piecewise linear basis functions. The fractional power is
approximated by the sinc-quadrature method described in Section~\ref{sec:fem}, with
quadrature step size $k=-1/(\beta\ln h)$, calibrated as a function of the mesh width
$h$ so that the quadrature error is of the same order as the finite element error.
Since $\kappa^2\in W^{2,\infty}(\cD)$, the theoretical convergence rate for the
$L^2(\cD\times\cD)$ error of the covariance function, with or without mass lumping, is
given by
$
    \min\{4\beta-\tfrac{1}{2},2\}.
$

To verify these rates numerically, we compare three approximation methods. The first is
the standard sinc-Galerkin approximation, where neither the $L^2$ inner product nor the
bilinear form is replaced by a lumped version. The second is the sinc-Galerkin
approximation with partial mass lumping: here the admissible bilinear form is kept
exact, namely $a_h=a_L|_{V_h\times V_h}$, while the mass matrix associated with the
discrete inner product is replaced by its lumped version. This is precisely the mass
matrix whose inverse appears in the matrix representation of the covariance
approximation. The third method is the sinc-Galerkin approximation with full mass
lumping, where both the discrete inner product and the bilinear form are replaced by
their lumped quadrature versions. One could also consider the complementary partial
mass-lumping variant, in which the inner product is kept exact while only the bilinear
form is replaced by its quadrature-based version; this case is not shown separately
here, since the three methods above already illustrate the effects relevant for our
main results.

We consider four values of the fractional exponent,
$\beta\in\{5/16,7/16,9/16,11/16\}$. For each value of $\beta$, we use four uniform
meshes with $N_\ell = 3+2^{5+\ell}$ nodes (including the two Dirichlet boundary nodes) and mesh widths $h_\ell = 1/(2+2^{5+\ell})$, for $\ell=0,\ldots,3$. Since the exact covariance function is
not available in closed form, we use a numerical approximation computed on a finer mesh
with $h_{\mathrm{ref}}=2^{-11}$ as a reference solution. The
$L^2(\cD\times\cD)$ errors are approximated by evaluating the covariance functions on
the reference mesh and computing the corresponding discrete average of the squared
differences. The resulting errors are shown in Figure~\ref{fig:d1errors}.

\begin{figure}[t]
    \centering
    \includegraphics[width=0.48\linewidth]{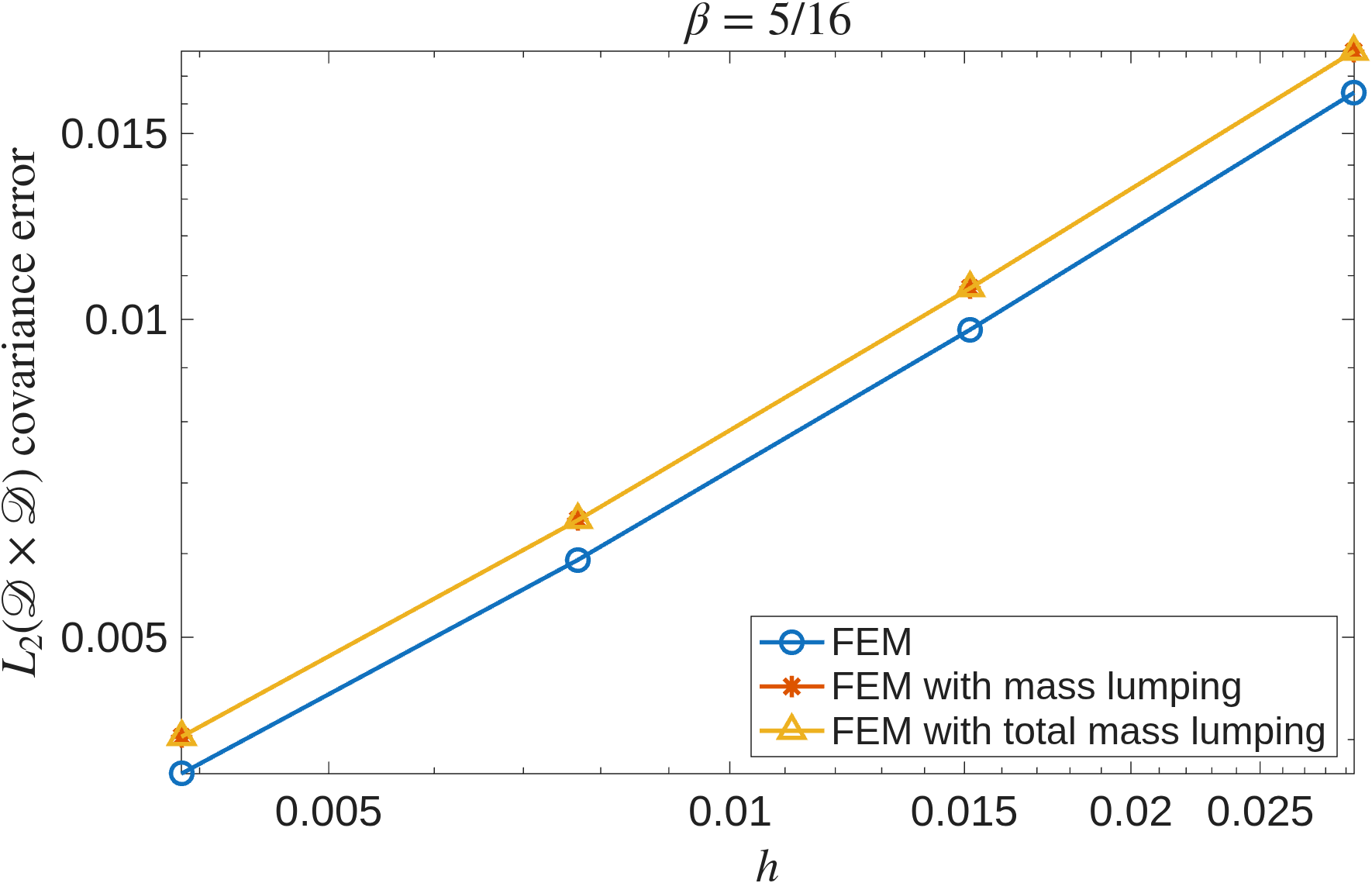}
    \hfill
    \includegraphics[width=0.48\linewidth]{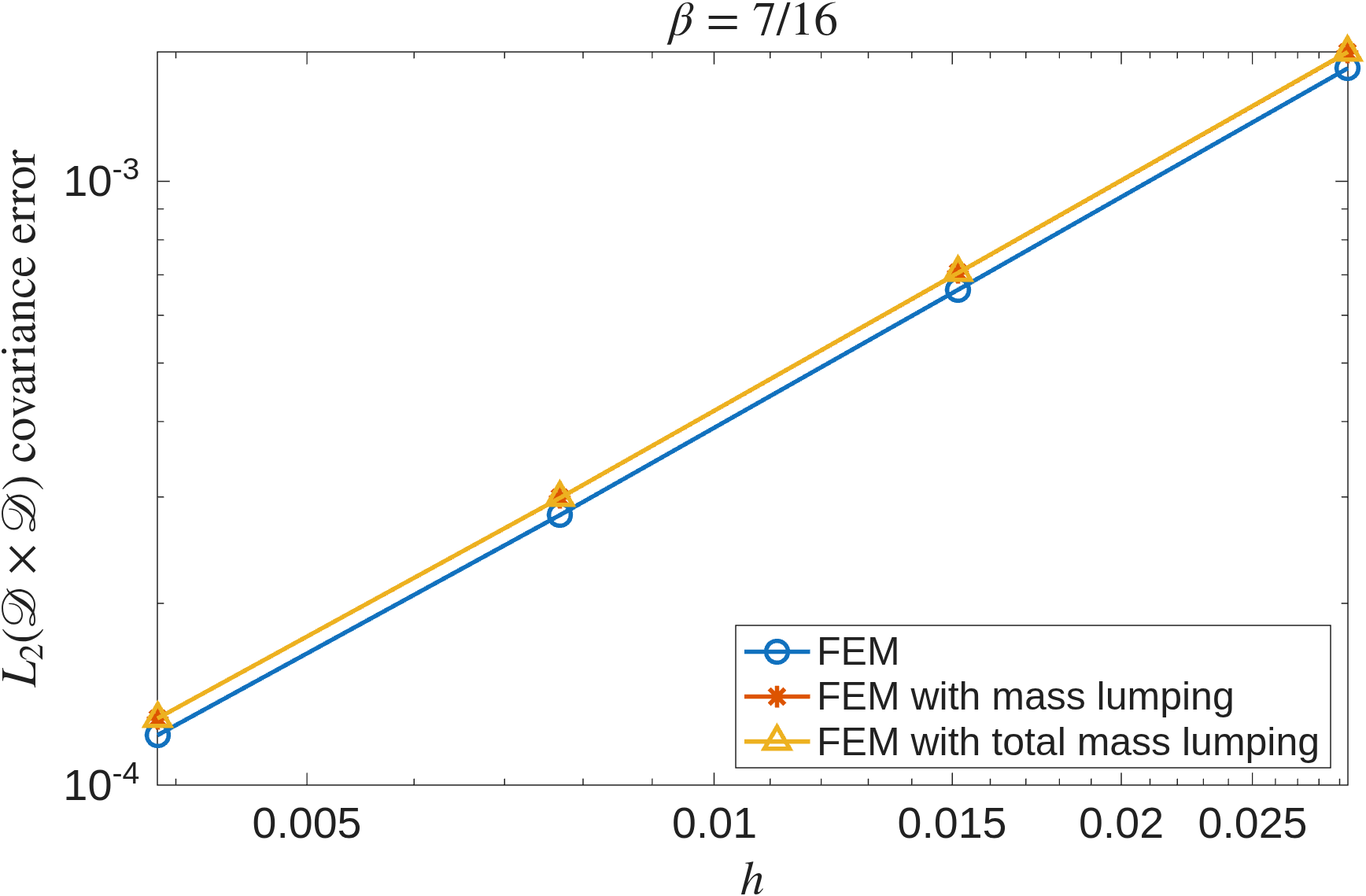}
    
    \vspace{0.2cm}
    \includegraphics[width=0.48\linewidth]{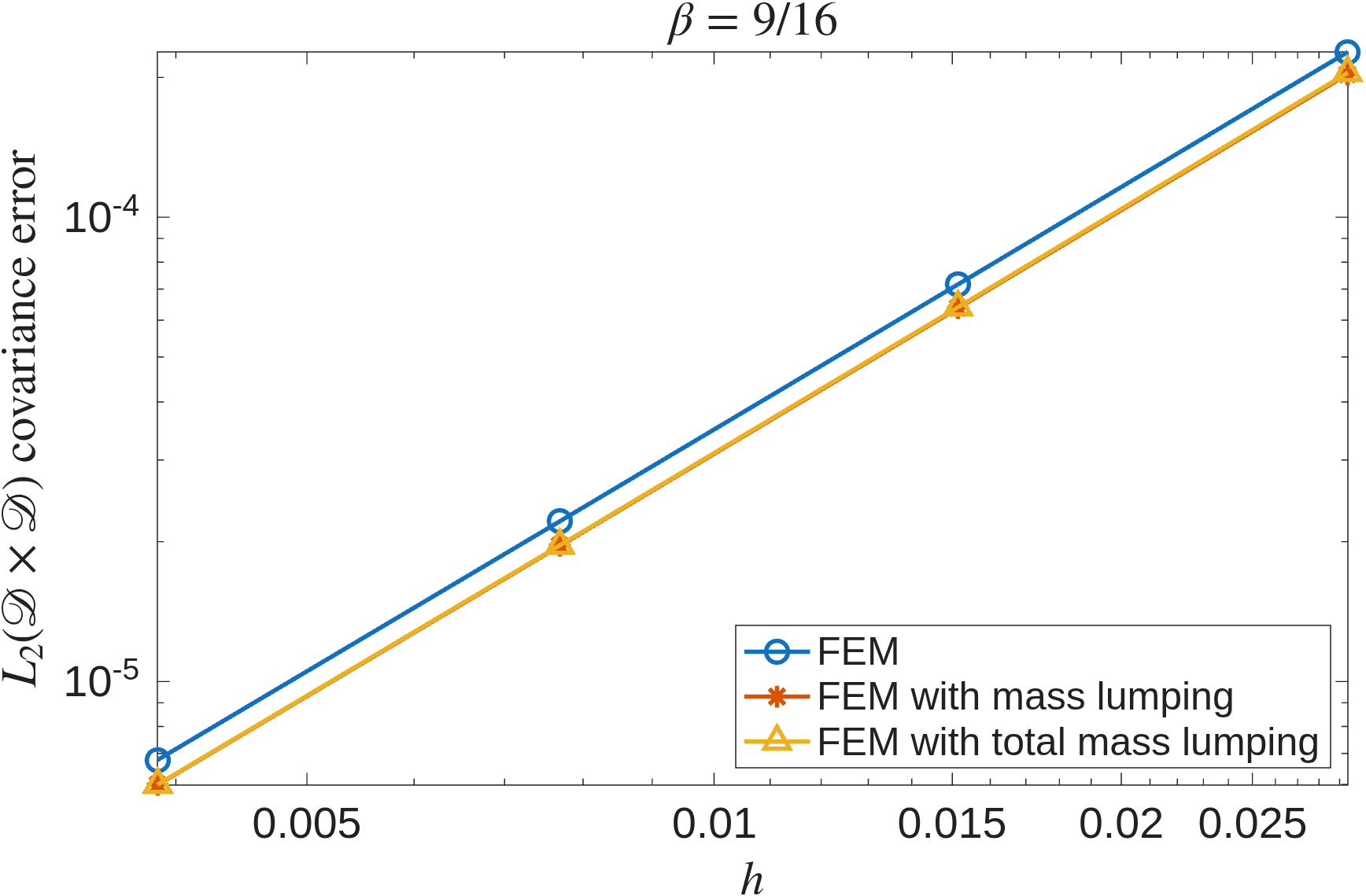}
    \hfill
    \includegraphics[width=0.48\linewidth]{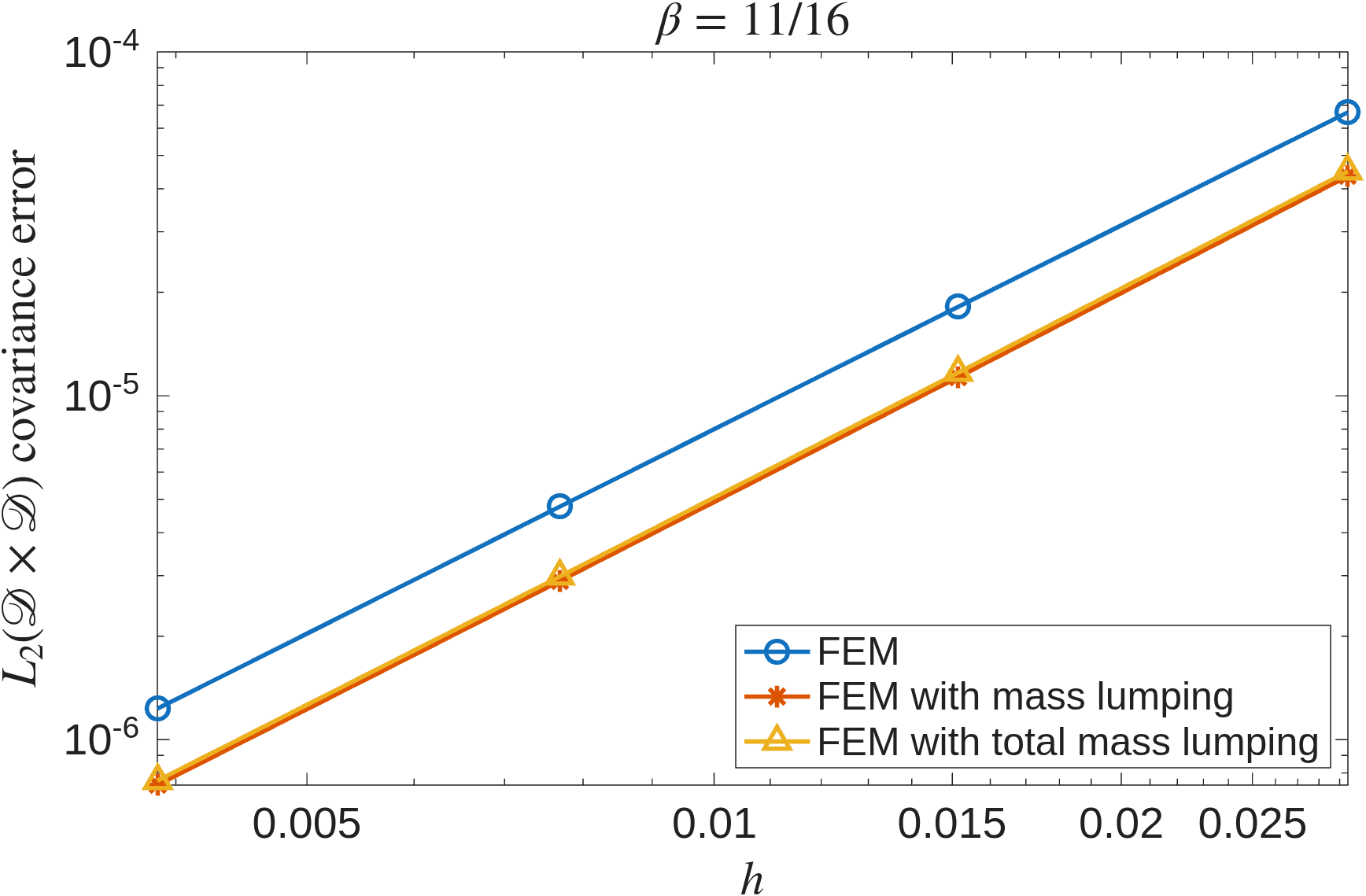}
    \caption{Observed covariance errors, shown on log-log scales as functions of the
    mesh width $h$, for different values of $\beta$.}
    \label{fig:d1errors}
\end{figure}

For each approximation method and each tested value of $\beta$, we obtain a data set
$\{(h_\ell,\mathrm{err}_\ell)\}_\ell$, where $\mathrm{err}_\ell$ denotes the covariance
error for the mesh width $h_\ell$. The observed convergence rate $\mathrm{r}$ is
computed as the least-squares slope in the regression
$\ln \mathrm{err}=\mathrm{c}+\mathrm{r}\ln h$. As shown in Table~\ref{tab:rates}, the
observed rates agree well with the theoretical values predicted by
Theorem~\ref{rat_approx_theorem}.

\begin{table}[t]
    \centering
    \caption{Observed convergence rates for the covariance errors in
    Figure~\ref{fig:d1errors}. The theoretical rates are shown in parentheses.}
    \label{tab:rates}
    \begin{tabular}{lcccc}
        \toprule
        $\beta$ & $5/16$ & $7/16$ & $9/16$ & $11/16$ \\
        \cmidrule(r){2-5}
        No mass lumping       & 0.73 (0.75) & 1.26 (1.25) & 1.73 (1.75) & 1.97 (2.00) \\
        Partial mass lumping  & 0.74 (0.75) & 1.26 (1.25) & 1.74 (1.75) & 2.01 (2.00) \\
        Full mass lumping     & 0.74 (0.75) & 1.26 (1.25) & 1.74 (1.75) & 2.01 (2.00) \\
        \bottomrule
    \end{tabular}
\end{table}

\subsection{An example on a metric graph}

Next, we consider the stochastic equation
\begin{equation}\label{eq:graph_spde}
    (\kappa^2-\Delta_\Gamma)^\beta u=\mathcal{W},
    \qquad \text{on } \Gamma,
\end{equation}
where $\Gamma$ is the compact metric graph shown in Figure~\ref{fig:graph}. The
operator $\Delta_\Gamma$ is equipped with standard Kirchhoff vertex conditions, as
described in Section~\ref{sec:metric_graph_domains}, and we set $\kappa=1$.

\begin{figure}[t]
    \begin{center}
        \includegraphics[width=0.54\linewidth]{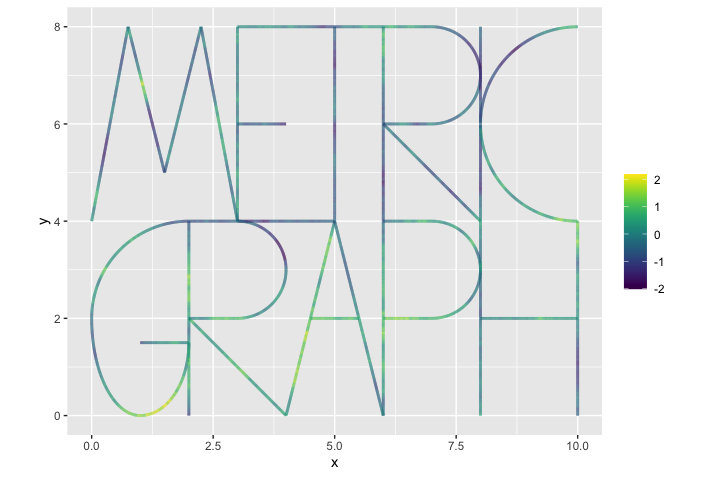}
        \includegraphics[width=0.445\linewidth]{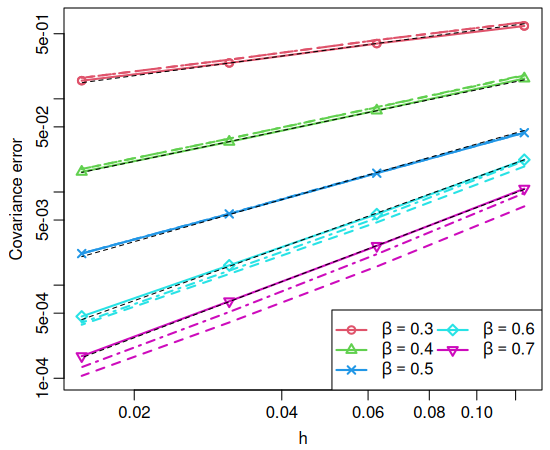}
    \end{center}
    \caption{A simulation of \eqref{eq:graph_spde} with $\beta=\nicefrac12$ and
    $\kappa=1$ on the metric graph used in the logo of the \texttt{MetricGraph}
    package (left). Observed $L^2(\Gamma\times\Gamma)$ covariance errors for different
    values of $\beta$ as functions of the mesh width $h$ (right). The different line types correspond to theoretical rates (black dashed), errors without mass
    lumping (solid), errors with partial mass lumping (dashed), and errors with full mass lumping (dash-dotted).}
    \label{fig:graph}
\end{figure}

By \cite[Theorem~6.9]{bolin2024regularity}, the Galerkin covariance approximation on
compact metric graphs has convergence rate
$\min\{4\beta-\nicefrac12,2\}$ for $\beta\in(\nicefrac14,1)$. Since metric graphs have
spectral dimension $d=1$, this is precisely the rate predicted by the general theory in
the present work. In the sinc-quadrature approximation of the fractional power, we take
the quadrature step size proportional to $|\log h|^{-1}$; more precisely, we use
$k=-1/(\beta\ln h)$. This choice ensures that the quadrature error is of the same order
as the finite element error.

We perform the same type of experiment as in the one-dimensional Euclidean case. We
consider $\beta=\nicefrac{n}{10}$ for $n=3,4,5,6,7$, and use four meshes obtained by
subdividing each edge into equally sized elements so that the maximal segment length is
$h_\ell=2^{-\ell}$, for $\ell=2,3,4,5$. For each value of $\beta$, we compare three
approximations: the standard sinc-Galerkin approximation without mass lumping, the
approximation with partial mass lumping, and the approximation with full mass lumping.
As before, partial mass lumping means that only the mass matrix associated with the
discrete inner product is replaced by its lumped version, whereas full mass lumping
means that both the discrete inner product and the bilinear form are replaced by their
lumped quadrature versions.

Since the exact covariance function is not available in closed form, we use a
sinc-Galerkin approximation without mass lumping on a fine reference mesh with
$h_{\mathrm{ref}}=2^{-8}$ as a proxy for the true covariance. For each approximation,
the $L^2(\Gamma\times\Gamma)$ error is computed by approximating the covariance
functions as piecewise constant on the reference mesh. The resulting errors are shown
in Figure~\ref{fig:graph}.

For each approximation method and each value of $\beta$, we computed the observed
convergence rates as in Section~6.1. The resulting observed rates, shown
in Table~\ref{tab:graph_rates}, agree well with the theoretical rates and confirm
that the use of mass lumping does not reduce the convergence order.

\begin{table}[t]
    \caption{Observed (theoretical) convergence rates for the covariance errors in
    Figure~\ref{fig:graph}. }
    \begin{center}
        \begin{tabular}{lccccc}
            \toprule
            $\beta$ &  $0.3$  & $0.4$ & $0.5$ & $0.6$ & $0.7$\\
            \cmidrule(r){2-6}
            No mass lumping       &  0.65 (0.7)  &  1.11 (1.1) &  1.44 (1.5) & 1.86 (1.9) & 1.99 (2.0)\\
            Partial mass lumping  &  0.66 (0.7)  &  1.11 (1.1) &  1.44 (1.5) & 1.88 (1.9) & 2.01 (2.0)\\
            Full mass lumping     &  0.67 (0.7)  &  1.12 (1.1) &  1.45 (1.5) & 1.92 (1.9) & 2.07 (2.0)\\
            \bottomrule
        \end{tabular}
    \end{center}
    \label{tab:graph_rates}
\end{table}

\subsection{An example on the sphere}
Finally, let us illustrate the theoretical results for an example on a closed surface. In particular, we consider the sphere $\mathbb{S}^2$ and the SPDE
\begin{equation}\label{eq:spde}
	(\kappa^2 - \Delta)^{\beta} \left(\tau u \right) = \mathcal{W}
	\quad 
	\text{on}\;\; \mathbb{S}^2,  
\end{equation}
where $\Delta$ is the Laplace-Beltrami operator, and $\mathcal{W}$ is Gaussian white noise on $L^2(\mathbb{S}^2)$.
By, e.g.\cite{kb-kriging}, the covariance function of $u$ is given by 
\begin{equation}\label{eq:sphere_cov} 
\varrho(x,x') 
= 
\sum_{\ell=0}^{\infty}  
\frac{\tau^{-2}}{(\kappa^2+ \ell(\ell+1))^{2\beta}} \frac{2\ell + 1}{4\pi}P_\ell(\cos d_{\mathbb{S}^2}(x,x')), 
\end{equation}
where $d_{\mathbb{S}^2}(\cdot,\cdot)$ denotes the geodesic distance and 
$P_\ell\from[-1,1]\to\bbR$ is the $\ell$-th Legendre polynomial, 
$$
P_\ell(y) 
= 
2^{-\ell} 
\frac1{\ell!}
\frac{\rd^\ell}{\rd y^\ell} 
\left( y^2 -1 \right)^\ell, 
\quad 
y\in[-1,1], 
\qquad 
\ell\in\bbN_0:=\{0,1,2,\ldots\}.
$$

We again focus on the covariance error, and this time consider the covariance-based rational SPDE approach by  \cite{bolin2024covariance} to approximate the covariance function of $u$ directly. As \cite{bolin2024covariance} suggested, we perform mass lumping to a computationally efficient GMRF approximation, and we now try to verify the theoretical rate of convergence for this approach when mass lumping is performed (\cite{bolin2024covariance} only derived the rate without mass lumping). 

We consider five different quasi-uniform meshes of the sphere:
$$
\begin{array}{c|ccccc}
\hline
\#\text{nodes} & 1692 & 2252 & 3242 & 4002 & 5762\\
\hline
h & 0.12036 & 0.10497 & 0.08812 & 0.07961 & 0.06675\\
\hline
\end{array}
$$
The order of the rational approximation is fixed at $m=8$. For this choice, the error
from the rational approximation is smaller than the finite element error, according to
the calibration of the rational order in \cite{bolin2024covariance}. We therefore
expect the covariance error to converge at the rate $\min\{4\beta-1,2\}$.

The true covariance function is approximated by truncating the infinite series in \eqref{eq:sphere_cov} to $\ell\in[0,60]$ and the $L^2(\mathbb{S}^2\times\mathbb{S}^2)$ error is computed by approximating the covariance functions to be piecewise constant on an overkill mesh consisting of $N_{ok} = 12252$ nodes. The resulting errors can be seen in Figure~\ref{fig:sphere} together with a simulation of the field $u$ on the overkill mesh.  

Based on the observed errors, linear regression is again used to compute the empirical rates of convergence for each value of $\beta$.
The resulting observed rates are shown in Table~\ref{tab:sphere_rates} and validate the theoretical results. 

\begin{figure}[t]
	\begin{center}
		\includegraphics[width=0.4\linewidth]{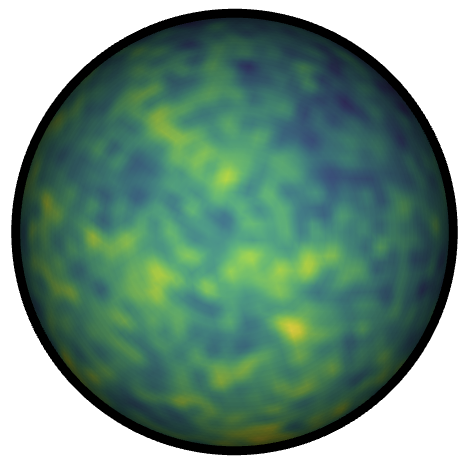}
		\includegraphics[width=0.55\linewidth]{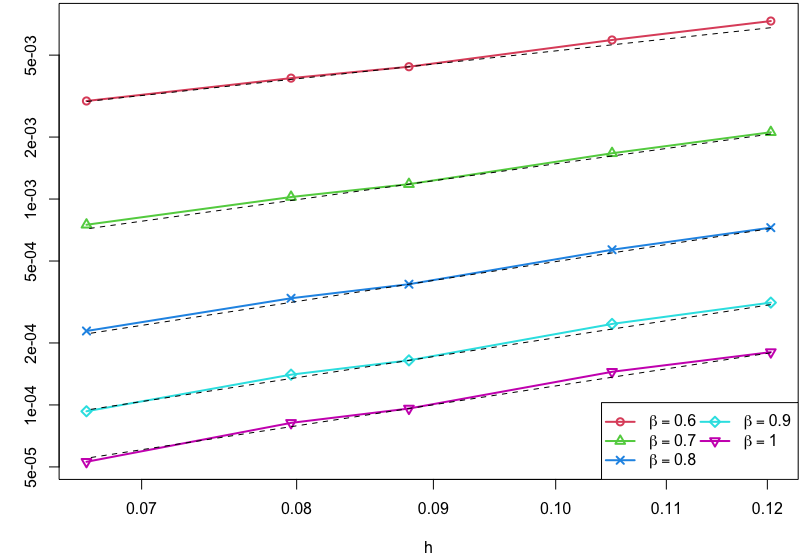}    
	\end{center}
	\caption{A simulation of \eqref{eq:spde} with $\beta = 0.6$ and $\tau=1=\kappa = 1$ on the unit sphere (left) and observed $L^2(\mathbb{S}^{2}\times\mathbb{S}^{2})$ covariance errors for different values of $\beta$ as functions of the mesh size $h$ (right) with the same choice of $\tau$ and $\kappa$. The black dashed lines show the theoretical rates for each case.}
	\label{fig:sphere}
\end{figure}

\begin{table}[t]
	\caption{Observed (theoretical) rates of convergence for the covariance errors in Figure~\ref{fig:sphere}.}
	\begin{center}
		\begin{tabular}{lccccc}
			\toprule
			$\beta$ &  $0.6$  & $0.7$ & $0.8$ & $0.9$ & $1$\\
			\cmidrule(r){2-6}
			Rates 		&  1.52 (1.4)  &  1.76 (1.8) &  1.97 (2.0) & 2.06 (2.0) & 2.08 (2.0)\\
			\bottomrule
	\end{tabular}
\end{center}
	\label{tab:sphere_rates}
\end{table}

\section{Discussion}\label{sec:discussion}

Figure~\ref{fig:d1errors} shows that the fully mass-lumped approximation can have a
smaller error than the approximation in which mass lumping is applied only to the
right-hand side. This phenomenon has previously been observed in the non-fractional
case $\beta=1$ with constant $\kappa$. In that setting, the matrix expression for the
precision operator reveals that some of the errors introduced by replacing the mass
matrix with its lumped version may partially cancel when all mass matrices in the
precision matrix are treated consistently.

Our results provide a theoretical justification for this observation in a much broader
setting. In particular, they show that replacing the exact mass matrices by lumped ones
does not reduce the convergence rate of the covariance approximation. This remains true
for fractional powers of elliptic operators and for the non-stationary extensions
considered in this work.

The analysis of the non-stationary variance-control factor $\tau$ gives an additional
conclusion. For models of the form $L^\beta(\tau u)=\mathcal W$, the covariance of the
solution involves multiplication by $\tau^{-1}$ on both sides of the covariance
operator associated with $L^{-\beta}\mathcal W$. In practical finite element
implementations, this multiplication must also be discretized, typically through nodal
interpolation or quadrature-based approximations. Our results show that the error
introduced by this additional discretization is controlled by the regularity of
$\tau^{-1}$. More precisely, if $\tau^{-1}$ belongs to the multiplier space $\mathcal
R^k$, which is typically of the form $W^{k,\infty}(\cD)$ coupled with appropriate boundary conditions, then the covariance error contains an additional term of order $h^k$. Thus, when
$k\geq2$, the discretization of the variance-control factor does not reduce the
second-order convergence rate of the covariance approximation.

Thus, mass lumping is not only computationally attractive because it leads to sparse
and easily invertible mass matrices, but it can also be used without sacrificing the
asymptotic convergence rate of the covariance approximation, even in the fractional and
non-stationary settings considered here.

\bibliographystyle{siamplain}
\bibliography{references}

@article{Porcu2024,
  author  = {Emilio Porcu and Moreno Bevilacqua and Robert Schaback and Chris J. Oates},
  title   = {The {M}at\'{e}rn Model: A Journey Through Statistics, Numerical Analysis and Machine Learning},
  journal = {Statist. Sci.},
  volume  = {39},
  number  = {3},
  pages   = {469--492},
  year    = {2024},
  doi     = {10.1214/24-STS923}
}

@article{stahl1993best,
  author  = {Stahl, Herbert R.},
  title   = {Best uniform rational approximation of {$x^\alpha$} on {$[0,1]$}},
  journal = {Bull. Amer. Math. Soc.},
  volume  = {28},
  number  = {1},
  pages   = {116--122},
  year    = {1993},
  doi     = {10.1090/S0273-0979-1993-00351-3}
}

@article{harizanov2020analysis,
  author  = {Harizanov, Stanislav and Lazarov, Raytcho and Margenov, Svetozar and Marinov, Pencho and Pasciak, Joseph},
  title   = {Analysis of numerical methods for spectral fractional elliptic equations based on the best uniform rational approximation},
  journal = {J. Comput. Phys.},
  volume  = {408},
  pages   = {109285},
  year    = {2020},
  doi     = {10.1016/j.jcp.2020.109285}
}

@article{lindgren2015rinla,
  author  = {Lindgren, Finn and Rue, H{\aa}vard},
  title   = {{Bayesian} spatial modelling with {R-INLA}},
  journal = {J. Stat. Softw.},
  volume  = {63},
  number  = {19},
  pages   = {1--25},
  year    = {2015},
  doi     = {10.18637/jss.v063.i19}
}

@Manual{R,
  author       = {{R Core Team}},
  title        = {R: A Language and Environment for Statistical Computing},
  organization = {R Foundation for Statistical Computing},
  address      = {Vienna, Austria},
  year         = {2026},
  url          = {https://www.R-project.org/}
}

@Manual{rnaturalearth,
  author = {Massicotte, Philippe and South, Andy},
  title  = {rnaturalearth: World Map Data from Natural Earth},
  year   = {2026},
  note   = {R package version 1.2.0},
  url    = {https://CRAN.R-project.org/package=rnaturalearth}
}

@article{sf,
  author  = {Pebesma, Edzer},
  title   = {Simple Features for {R}: Standardized Support for Spatial Vector Data},
  journal = {R J.},
  volume  = {10},
  number  = {1},
  pages   = {439--446},
  year    = {2018},
  doi     = {10.32614/RJ-2018-009}
}

@Manual{fmesher,
  author = {Lindgren, Finn},
  title  = {fmesher: Triangle Meshes and Related Geometry Tools},
  year   = {2026},
  note   = {R package version 0.7},
  url    = {https://CRAN.R-project.org/package=fmesher}
}

@Manual{rSPDEpackage,
  author = {Bolin, David and Simas, Alexandre B.},
  title  = {{rSPDE}: Rational Approximations of Fractional Stochastic Partial Differential Equations},
  year   = {2025},
  note   = {R package version 2.5.1},
  url    = {https://CRAN.R-project.org/package=rSPDE}
}

@book{krainski2019advanced,
  author    = {Krainski, Elias T. and G{\'o}mez-Rubio, Virgilio and Bakka, Haakon and
                 Lenzi, Amanda and Castro-Camilo, Daniela and Simpson, Daniel and
                 Lindgren, Finn and Rue, H{\aa}vard},
  title     = {Advanced Spatial Modeling with Stochastic Partial Differential Equations Using {R} and {INLA}},
  publisher = {Chapman and Hall/CRC},
  address   = {Boca Raton},
  year      = {2019},
  doi       = {10.1201/9780429031892}
}

@article{ingebrigtsen2014spatial,
  author  = {Ingebrigtsen, Rikke and Lindgren, Finn K. and Steinsland, Ingelin},
  title   = {Spatial models with explanatory variables in the dependence structure},
  journal = {Spat. Stat.},
  volume  = {8},
  pages   = {20--38},
  year    = {2014},
  doi     = {10.1016/j.spasta.2013.06.002}
}

@article{fuglstad2015exploring,
  author  = {Fuglstad, Geir-Arne and Lindgren, Finn and Simpson, Daniel and Rue, H{\aa}vard},
  title   = {Exploring a new class of non-stationary spatial {Gaussian} random fields with varying local anisotropy},
  journal = {Statist. Sinica},
  volume  = {25},
  number  = {1},
  pages   = {115--133},
  year    = {2015},
  doi     = {10.5705/ss.2013.106w}
}

@article{bonito2015numerical,
  author  = {Bonito, Andrea and Pasciak, Joseph E.},
  title   = {Numerical approximation of fractional powers of elliptic operators},
  journal = {Math. Comp.},
  volume  = {84},
  number  = {295},
  pages   = {2083--2110},
  year    = {2015},
  doi     = {10.1090/S0025-5718-2015-02937-8}
}

@article{bonito2019sinc,
  author  = {Bonito, Andrea and Lei, Wenyu and Pasciak, Joseph E.},
  title   = {On sinc quadrature approximations of fractional powers of regularly accretive operators},
  journal = {J. Numer. Math.},
  volume  = {27},
  number  = {2},
  pages   = {57--68},
  year    = {2019},
  doi     = {10.1515/jnma-2017-0116}
}

@article{bolin2026statistical,
  author  = {Bolin, David and Simas, Alexandre B. and Wallin, Jonas},
  title   = {Statistical inference for {Gaussian} {Whittle}--{Mat{\'e}rn} fields on metric graphs},
  journal = {J. R. Stat. Soc. Ser. B Stat. Methodol.},
  year    = {2026},
  doi     = {10.1093/jrsssb/qkag074},
  url     = {https://doi.org/10.1093/jrsssb/qkag074},
}

@book{berkolaiko2013introduction,
  author    = {Berkolaiko, Gregory and Kuchment, Peter},
  title     = {Introduction to Quantum Graphs},
  series    = {Mathematical Surveys and Monographs},
  volume    = {186},
  publisher = {American Mathematical Society},
  address   = {Providence, RI},
  year      = {2013},
  isbn      = {978-0-8218-9211-4}
}

@article{bolin2024gaussian,
  author  = {Bolin, David and Simas, Alexandre B. and Wallin, Jonas},
  title   = {{Gaussian} {Whittle}--{Mat{\'e}rn} fields on metric graphs},
  journal = {Bernoulli},
  volume  = {30},
  number  = {2},
  pages   = {1611--1639},
  year    = {2024},
  doi     = {10.3150/23-BEJ1647}
}

@misc{OpenStreetMap,
  author       = {{OpenStreetMap contributors}},
  title        = {{OpenStreetMap} data},
  year         = {2026},
  url          = {https://www.openstreetmap.org}
}

@misc{overpassTurbo,
  author       = {Raifer, Martin},
  title        = {{Overpass turbo}: A web-based data filtering tool for {OpenStreetMap}},
  year         = {2026},
  url          = {https://overpass-turbo.eu/}
}

@Manual{MetricGraphPackage,
  author = {Bolin, David and Simas, Alexandre B. and Wallin, Jonas},
  title  = {{MetricGraph}: Random fields on metric graphs},
  year   = {2025},
  note   = {R package version 1.6.0},
  url    = {https://CRAN.R-project.org/package=MetricGraph}
}

@article{bolin2026markov,
  author  = {Bolin, David and Simas, Alexandre B. and Wallin, Jonas},
  title   = {{Markov} properties of {Gaussian} random fields on compact metric graphs},
  journal = {Bernoulli},
  volume  = {32},
  number  = {1},
  pages   = {153--178},
  year    = {2026},
  doi     = {10.3150/25-BEJ1853}
}

@article{hofreither2020unified,
  author  = {Hofreither, Clemens},
  title   = {A unified view of some numerical methods for fractional diffusion},
  journal = {Comput. Math. Appl.},
  volume  = {80},
  number  = {2},
  pages   = {332--350},
  year    = {2020},
  doi     = {10.1016/j.camwa.2019.07.025}
}

@misc{almeida-sousa2026finite,
  author        = {Almeida-Sousa, Kelvin J. R. and Bolin, David and Simas, Alexandre B.},
  title         = {Finite element and box-method discretizations for fractional elliptic problems with quadrature and mass lumping},
  year          = {2026},
  doi           = {10.48550/arXiv.2605.12082},
  eprint        = {2605.12082},
  archivePrefix = {arXiv},
  primaryClass  = {math.NA},
  note          = {arXiv:2605.12082}
}

@article{allard2021,
  author  = {Carrizo Vergara, Ricardo and Allard, Denis and Desassis, Nicolas},
  title   = {A general framework for {SPDE}-based stationary random fields},
  journal = {Bernoulli},
  volume  = {28},
  number  = {1},
  pages   = {1--32},
  year    = {2022},
  doi     = {10.3150/20-bej1317}
}

@article{Alonso2021,
  author  = {Daniel Sanz-Alonso and Ruiyi Yang},
  title   = {The {SPDE} Approach to {Matérn} Fields: Graph Representations},
  journal = {Statist. Sci.},
  volume  = {37},
  number  = {4},
  pages   = {519--540},
  year    = {2022},
  doi     = {10.1214/21-sts838}
}

@article{Alonso2021fem,
  author  = {Sanz-Alonso, Daniel and Yang, Ruiyi},
  title   = {Finite element representations of {Gaussian} processes: Balancing numerical and statistical accuracy},
  journal = {SIAM/ASA J. Uncertain. Quantif.},
  volume  = {10},
  number  = {4},
  pages   = {1323--1349},
  year    = {2022},
  doi     = {10.1137/21M144788X}
}

@article{anderes2020isotropic,
  author  = {Anderes, Ethan and M{\o}ller, Jesper and Rasmussen, Jakob Gulddahl},
  title   = {Isotropic covariance functions on graphs and their edges},
  journal = {Ann. Statist.},
  volume  = {48},
  number  = {4},
  pages   = {2478--2503},
  year    = {2020},
  doi     = {10.1214/19-AOS1896}
}

@article{lindgren2024diffusion,
  author  = {Lindgren, Finn and Bakka, Haakon and Bolin, David and Krainski, Elias and Rue, H{\aa}vard},
  title   = {A diffusion‐based spatio‐temporal extension of {Gaussian} {Mat{\'e}rn} fields},
  journal = {Stat. Oper. Res. Trans.},
  volume  = {48},
  number  = {1},
  pages   = {3--66},
  year    = {2024},
  url     = {https://doi.org/10.57645/20.8080.02.13},
  doi     = {10.57645/20.8080.02.13}
}

@article{bakka2019,
  author  = {Bakka, Haakon and Vanhatalo, Jarno and Illian, Janine B.
  and Simpson, Daniel and Rue, H{\aa}vard},
  title   = {Non-stationary {G}aussian models with physical barriers},
  journal = {Spat. Stat.},
  volume  = {29},
  pages   = {268--288},
  year    = {2019},
  doi     = {10.1016/j.spasta.2019.01.002}
}

@article{bolin2023equivalence,
  author    = {Bolin, David and Kirchner, Kristin},
  title     = {Equivalence of measures and asymptotically optimal linear prediction for {Gaussian} random fields with fractional-order covariance operators},
  journal   = {Bernoulli},
  volume    = {29},
  number    = {2},
  pages     = {1476--1504},
  publisher = {Bernoulli Society for Mathematical Statistics and Probability},
  year      = {2023},
  doi       = {10.3150/22-bej1507}
}

@article{BK2020rational,
  author  = {Bolin, David and Kirchner, Kristin},
  title   = {The rational {SPDE} approach for {G}aussian random fields with general smoothness},
  journal = {J. Comput. Graph. Statist.},
  volume  = {29},
  number  = {2},
  pages   = {274--285},
  year    = {2020},
  doi     = {10.1080/10618600.2019.1665537}
}

@article{BKK2018,
  author  = {Bolin, David and Kirchner, Kristin and Kov\'{a}cs, Mih\'{a}ly},
  title   = {Weak convergence of {G}alerkin approximations for fractional
  elliptic stochastic {PDE}s with spatial white noise},
  journal = {BIT Numer. Math.},
  volume  = {58},
  number  = {4},
  pages   = {881--906},
  year    = {2018},
  doi     = {10.1007/s10543-018-0719-8}
}

@article{BKK2020,
  author  = {Bolin, David and Kirchner, Kristin and Kov\'{a}cs, Mih\'{a}ly},
  title   = {Numerical solution of fractional elliptic stochastic {PDE}s
  with spatial white noise},
  journal = {IMA J. Numer. Anal.},
  volume  = {40},
  number  = {2},
  pages   = {1051--1073},
  year    = {2020},
  doi     = {10.1093/imanum/dry091}
}

@article{bolin14,
  author   = {Bolin, David},
  title    = {Spatial {M}at\'{e}rn fields driven by non-{G}aussian noise},
  journal  = {Scand. J. Statist.},
  volume   = {41},
  number   = {3},
  pages    = {557--579},
  year     = {2014},
  url      = {https://doi.org/10.1111/sjos.12046},
  doi      = {10.1111/sjos.12046},
}

@article{bolin2024covariance,
  author    = {Bolin, David and Simas, Alexandre B. and Xiong, Zhen},
  title     = {Covariance--based rational approximations of fractional {SPDEs} for computationally efficient {Bayesian} inference},
  journal   = {J. Comput. Graph. Statist.},
  volume    = {33},
  number    = {1},
  pages     = {64--74},
  publisher = {Taylor \& Francis},
  year      = {2024},
  doi       = {10.1080/10618600.2023.2231051}
}

@article{bolin2024regularity,
  author  = {Bolin, David and Kov{\'a}cs, Mih{\'a}ly and Kumar, Vivek and Simas, Alexandre B.},
  title   = {Regularity and numerical approximation of fractional elliptic differential equations on compact metric graphs},
  journal = {Math. Comp.},
  volume  = {93},
  number  = {349},
  pages   = {2439--2472},
  year    = {2024},
  doi     = {10.1090/mcom/3929}
}

@article{bolin_comparison_2013,
  author  = {Bolin, David and Lindgren, Finn},
  title   = {A comparison between {Markov} approximations and other methods for large spatial data sets},
  journal = {Comput. Statist. Data Anal.},
  volume  = {61},
  pages   = {7--21},
  year    = {2013},
  doi     = {10.1016/j.csda.2012.11.011}
}

@article{bonito2024numerical,
  author    = {Bonito, Andrea and Guignard, Diane and Lei, Wenyu},
  title     = {Numerical approximation of {Gaussian} random fields on closed surfaces},
  journal   = {Comput. Methods Appl. Math.},
  volume    = {24},
  number    = {4},
  pages     = {829--858},
  publisher = {De Gruyter},
  year      = {2024},
  doi       = {10.1515/cmam-2022-0237}
}

@inproceedings{borovitskiy2020matern,
  author    = {Borovitskiy, Viacheslav and Terenin, Alexander and Mostowsky, Peter and Deisenroth, Marc Peter},
  title     = {{Mat{\'e}rn} {Gaussian} processes on {Riemannian} manifolds},
  booktitle = {{Advances in Neural Information Processing Systems}},
  volume    = {33},
  pages     = {12426--12437},
  year      = {2020}
}

@book{brenner2008mathematical,
  author    = {Brenner, Susanne C. and Scott, L. Ridgway},
  title     = {The Mathematical Theory of Finite Element Methods},
  series    = {Texts in Applied Mathematics},
  volume    = {15},
  edition   = {3},
  publisher = {Springer},
  address   = {New York},
  year      = {2008},
  doi       = {10.1007/978-0-387-75934-0}
}

@article{bw20,
  author   = {Bolin, David and Wallin, Jonas},
  title    = {Multivariate type {G} {M}at\'{e}rn stochastic partial differential
  equation random fields},
  journal  = {J. R. Stat. Soc. Ser. B Stat. Methodol.},
  volume   = {82},
  number   = {1},
  pages    = {215--239},
  year     = {2020},
  doi      = {10.1111/rssb.12351},
}

@article{cameletti_spatio-temporal_2013,
  author  = {Cameletti, Michela and Lindgren, Finn and Simpson, Daniel and Rue, Håvard},
  title   = {Spatio-temporal modeling of particulate matter concentration through the {SPDE} approach},
  journal = {AStA Adv. Stat. Anal.},
  volume  = {97},
  number  = {2},
  pages   = {109--131},
  year    = {2013},
  url     = {https://doi.org/10.1007/s10182-012-0196-3},
  doi     = {10.1007/s10182-012-0196-3},
}

@article{suuronen2022cauchy,
  author    = {Suuronen, Jarkko and Chada, Neil K. and Roininen, Lassi},
  title     = {{Cauchy} {Markov} random field priors for {Bayesian} inversion},
  journal   = {Stat. Comput.},
  volume    = {32},
  number    = {2},
  pages     = {33},
  publisher = {Springer},
  year      = {2022},
  doi       = {10.1007/s11222-022-10089-z}
}

@book{ciarlet2002finite,
  author    = {Ciarlet, Philippe G.},
  title     = {The finite element method for elliptic problems},
  publisher = {SIAM},
  year      = {2002},
  doi       = {10.1137/1.9780898719208}
}

@article{cox2020,
  author  = {Cox, Sonja G. and Kirchner, Kristin},
  title   = {Regularity and convergence analysis
  in {S}obolev and {H}\"older spaces
  for generalized {W}hittle--{M}atérn fields},
  journal = {Numer. Math.},
  volume  = {146},
  pages   = {819--873},
  year    = {2020},
  doi     = {10.1007/s00211-020-01151-x}
}

@article{doi:10.1137/21M1400717,
  author  = {Jansson, Erik and Kov\'{a}cs, Mih\'{a}ly and Lang, Annika},
  title   = {Surface Finite Element Approximation of Spherical {Whittle}--{Matérn} {Gaussian} Random Fields},
  journal = {SIAM J. Sci. Comput.},
  volume  = {44},
  number  = {2},
  pages   = {A825-A842},
  year    = {2022},
  url     = {https://doi.org/10.1137/21M1400717},
  doi     = {10.1137/21M1400717}
}

@article{dziuk2013finite,
  author    = {Dziuk, Gerhard and Elliott, Charles M.},
  title     = {Finite element methods for surface {PDEs}},
  journal   = {Acta Numer.},
  volume    = {22},
  pages     = {289--396},
  publisher = {Cambridge University Press},
  year      = {2013},
  doi       = {10.1017/s0962492913000056}
}

@incollection{fix1972effects,
  author    = {Fix, George J.},
  title     = {Effects of quadrature errors in finite element approximation of steady state, eigenvalue and parabolic problems},
  booktitle = {The mathematical foundations of the finite element method with applications to partial differential equations},
  pages     = {525--556},
  publisher = {Elsevier},
  year      = {1972},
  doi       = {10.1016/b978-0-12-068650-6.50024-1}
}

@book{grisvard2011elliptic,
  author    = {Grisvard, Pierre},
  title     = {Elliptic problems in nonsmooth domains},
  publisher = {SIAM},
  address   = {Philadelphia},
  year      = {2011},
  doi       = {10.1137/1.9781611972030}
}

@article{Harbrecht2021,
  author    = {Harbrecht, Helmut and Herrmann, Lukas and Kirchner, Kristin and Schwab, Christoph},
  title     = {Multilevel approximation of {Gaussian} random fields: Covariance compression, estimation, and spatial prediction},
  journal   = {Adv. Comput. Math.},
  volume    = {50},
  number    = {5},
  pages     = {101},
  publisher = {Springer},
  year      = {2024},
  doi       = {10.1007/s10444-024-10187-8}
}

@article{herrmann2020multilevel,
  author    = {Herrmann, Lukas and Kirchner, Kristin and Schwab, Christoph},
  title     = {Multilevel approximation of {Gaussian} random fields: fast simulation},
  journal   = {Math. Models Methods Appl. Sci.},
  volume    = {30},
  number    = {1},
  pages     = {181--223},
  publisher = {World Scientific},
  year      = {2020},
  doi       = {10.1142/s0218202520500050}
}

@article{Hildeman2020,
  author  = {Hildeman, Anders and Bolin, David and Rychlik, Igor},
  title   = {Deformed {SPDE} models with an application to spatial modeling of significant wave height},
  journal = {Spat. Stat.},
  volume  = {42},
  pages   = {100449},
  year    = {2021},
  doi     = {10.1016/j.spasta.2020.100449}
}

@article{jin2013error,
  author    = {Jin, Bangti and Lazarov, Raytcho and Zhou, Zhi},
  title     = {Error estimates for a semidiscrete finite element method for fractional order parabolic equations},
  journal   = {SIAM J. Numer. Anal.},
  volume    = {51},
  number    = {1},
  pages     = {445--466},
  publisher = {SIAM},
  year      = {2013},
  doi       = {10.1137/120873984}
}

@article{kb-kriging,
  author  = {Kirchner, Kristin and Bolin, David},
  title   = {Necessary and sufficient conditions
  for asymptotically optimal linear prediction
  of random fields on compact metric spaces},
  journal = {Ann. Statist.},
  volume  = {50},
  number  = {2},
  pages   = {1038--1065},
  year    = {2022},
  doi     = {10.1214/21-aos2138}
}

@misc{baratta2023dolfinx,
  author    = {Baratta, Igor A. and Dean, Joseph P. and Dokken, J{\o}rgen S. and
                    Habera, Michal and Hale, Jack S. and Richardson, Chris and
                    Rognes, Marie E. and Scroggs, Matthew W. and Sime, Nathan and
                    Wells, Garth N.},
  title     = {{DOLFINx}: The next generation {FEniCS} problem solving environment},
  publisher = {Zenodo},
  year      = {2023},
  doi       = {10.5281/zenodo.10447666},
  note      = {DOI: 10.5281/zenodo.10447666}
}

@article{geuzaine2009gmsh,
  author  = {Geuzaine, Christophe and Remacle, Jean-Fran{\c{c}}ois},
  title   = {{Gmsh}: A three-dimensional finite element mesh generator with built-in
               pre- and post-processing facilities},
  journal = {Internat. J. Numer. Methods Engrg.},
  volume  = {79},
  number  = {11},
  pages   = {1309--1331},
  year    = {2009},
  doi     = {10.1002/nme.2579}
}

@article{alnaes2014ufl,
  author  = {Aln{\ae}s, Martin S. and Logg, Anders and {\O}lgaard, Kristian B. and
               Rognes, Marie E. and Wells, Garth N.},
  title   = {Unified Form Language: A domain-specific language for weak formulations
               of partial differential equations},
  journal = {ACM Trans. Math. Software},
  volume  = {40},
  number  = {2},
  pages   = {9:1--9:37},
  year    = {2014},
  doi     = {10.1145/2566630}
}

@article{dalcin2011petsc4py,
  author  = {Dalcin, Lisandro D. and Paz, Rodrigo R. and Kler, Pablo A. and
               Cosimo, Alejandro},
  title   = {Parallel distributed computing using {Python}},
  journal = {Adv. Water Resour.},
  volume  = {34},
  number  = {9},
  pages   = {1124--1139},
  year    = {2011},
  doi     = {10.1016/j.advwatres.2011.04.013}
}

@article{lindgren11,
  author  = {Lindgren, Finn and Rue, H{\aa}vard and Lindstr\"{o}m, Johan},
  title   = {An explicit link between {G}aussian fields and {G}aussian
  {M}arkov random fields: the stochastic partial differential
  equation approach},
  journal = {J. R. Stat. Soc. Ser. B Stat. Methodol.},
  volume  = {73},
  number  = {4},
  pages   = {423--498},
  year    = {2011},
  doi     = {10.1111/j.1467-9868.2011.00777.x}
}

@article{lindgren2022spde,
  author    = {Lindgren, Finn and Bolin, David and Rue, H{\aa}vard},
  title     = {The {SPDE} approach for {Gaussian} and non-{Gaussian} fields: 10 years and still running},
  journal   = {Spat. Stat.},
  volume    = {50},
  pages     = {100599},
  publisher = {Elsevier},
  year      = {2022},
  doi       = {10.1016/j.spasta.2022.100599}
}

@book{matern60,
  author    = {Mat\'{e}rn, Bertil},
  title     = {Spatial variation: {S}tochastic models and their application
  to some problems in forest surveys and other sampling
  investigations},
  pages     = {144},
  publisher = {Meddelanden Fr{\aa}n Statens Skogsforskningsinstitut,
  Band 49, Nr.\ 5, Stockholm},
  year      = {1960},
  doi       = {10.18174/201735}
}

@book{thomee2007galerkin,
  author    = {Thom{\'e}e, Vidar},
  title     = {Galerkin finite element methods for parabolic problems},
  series    = {Springer Series in Computational Mathematics},
  volume    = {25},
  publisher = {Springer},
  year      = {2007},
  doi       = {10.1007/3-540-33122-0}
}

@article{Wallin15,
  author   = {Wallin, Jonas and Bolin, David},
  title    = {Geostatistical modelling using non-{G}aussian {M}at\'{e}rn fields},
  journal  = {Scand. J. Statist.},
  volume   = {42},
  number   = {3},
  pages    = {872--890},
  year     = {2015},
  url      = {https://doi.org/10.1111/sjos.12141},
  doi      = {10.1111/sjos.12141},
}

@article{whittle63,
  author  = {Whittle, P.},
  title   = {Stochastic processes in several dimensions},
  journal = {Bull. Inst. Internat. Statist.},
  volume  = {40},
  pages   = {974--994},
  year    = {1963}
}

@book{zeidler2012applied,
  author    = {Zeidler, Eberhard},
  title     = {Applied functional analysis: Applications to mathematical physics},
  series    = {Applied Mathematical Sciences},
  volume    = {108},
  publisher = {Springer},
  year      = {2012},
  doi       = {10.1007/978-1-4612-0821-1}
}

@book{Nualart_1995,
  author    = {Nualart, David},
  title     = {The {M}alliavin calculus and related topics},
  series    = {Probability and its Applications},
  publisher = {Springer},
  address   = {New York},
  year      = {1995},
  doi       = {10.1007/978-1-4757-2437-0},
  isbn      = {9781475724370},
}

@book{Lee2013Smooth,
  author    = {Lee, John M.},
  title     = {Introduction to Smooth Manifolds},
  series    = {Graduate Texts in Mathematics},
  volume    = {218},
  pages     = {xvi+708},
  edition   = {2},
  publisher = {Springer},
  address   = {New York, NY},
  year      = {2013},
  doi       = {10.1007/978-1-4419-9982-5}
}

@book{Cohn_2013,
  author    = {Cohn, Donald L.},
  title     = {Measure theory: Second edition},
  series    = {Birkhäuser Advanced Texts Basler Lehrbücher},
  publisher = {Springer},
  address   = {New York},
  year      = {2013},
  doi       = {10.1007/978-1-4614-6956-8},
  isbn      = {9781461469568}
}

@book{Simon_2010,
  author    = {Simon, Barry},
  title     = {Trace ideals and their applications},
  series    = {Mathematical Surveys and Monographs},
  publisher = {American Mathematical Society},
  year      = {2010},
  doi       = {10.1090/surv/120},
  url       = {https://doi.org/10.1090/surv/120},
  isbn      = {9781470413477}
}

@article{bolin2025log,
  author  = {Bolin, David and Saduakhas, Damilya and Simas, Alexandre B.},
  title   = {{Log}-{Gaussian} {Cox} process on general metric graphs},
  journal = {Biometrika},
  year    = {2026},
  doi     = {10.1093/biomet/asag043},
  url     = {https://doi.org/10.1093/biomet/asag043},
}

\end{document}